\documentclass[11pt,letterpaper]{article}
\usepackage[utf8]{inputenc}
\usepackage[margin=1in]{geometry}

\usepackage{amsmath, amsthm, amssymb}
\usepackage{MnSymbol} 
\usepackage{graphicx,color}
\usepackage[hyphens]{url}
\usepackage{dsfont}
\usepackage{booktabs}
\usepackage[normalem]{ulem}
\usepackage{mathtools}
\usepackage{nicefrac}
\usepackage{multirow}
\usepackage{algorithm}
\usepackage[noend]{algpseudocode}
\usepackage{tikz} 
\usepackage{soul} 
\usepackage[T1]{fontenc}
\usepackage{bbm}
\usepackage{blkarray}
\usepackage{cancel}
\usepackage{enumitem}
\usepackage{float}
\usepackage{mathrsfs}
\usepackage[framemethod=tikz]{mdframed}
\usepackage{microtype}
\usepackage{ragged2e}
\usepackage{sectsty}
\usepackage{thmtools}
\usepackage{thm-restate}
\usepackage[Symbolsmallscale]{upgreek}
\usepackage{array}
\usepackage{subcaption}
\usepackage{tabularx}
\usepackage{euscript}
\usetikzlibrary{cd}

\usepackage[
  backend=biber,
  natbib=true,
  giveninits=true,
  maxnames=10,
  style=alphabetic,
  sorting=nyt,
  sortcites=true
]{biblatex}
\usepackage{hyperref}
\usepackage[nameinlink,capitalize]{cleveref}
\hypersetup{citecolor=violet, colorlinks=true, linkcolor=blue}

\numberwithin{equation}{section}
\newtheorem{theorem}{Theorem}[section]
\newtheorem{cor}[theorem]{Corollary}
\newtheorem{lemma}[theorem]{Lemma}
\newtheorem{remark}[theorem]{Remark}

\newtheorem{defin}[theorem]{Definition}



\makeatletter
\patchcmd{\@counteralias}
{\@ifdefinable{c@#1}}
{\expandafter\@ifdefinable\csname c@#1\endcsname}
{}{}
\makeatother

\makeatletter
\newcommand{\opnorm}{\@ifstar\@opnorms\@opnorm}
\newcommand{\@opnorms}[1]{%
	\left|\mkern-1.5mu\left|\mkern-1.5mu\left|
	#1
	\right|\mkern-1.5mu\right|\mkern-1.5mu\right|
}
\newcommand{\@opnorm}[2][]{%
	\mathopen{#1|\mkern-1.5mu#1|\mkern-1.5mu#1|}
	#2
	\mathclose{#1|\mkern-1.5mu#1|\mkern-1.5mu#1|}
}
\makeatother

\newcommand{\bE}{\mathbb{E}}

\newcommand{\bN}{\mathbb{N}}

\newcommand{\bP}{\mathbb{P}}

\newcommand{\bR}{\mathbb{R}}

\newcommand{\cD}{\mathcal{D}}
\newcommand{\cE}{\mathcal{E}}

\newcommand{\cN}{\mathcal{N}}
\newcommand{\cO}{\mathcal{O}}

\newcommand{\sfR}{\mathsf{R}}
\newcommand{\fc}{\mathfrak{c}}

\newcommand{\dd}{\mathrm{d}}
\newcommand{\eps}{\varepsilon}
\newcommand{\RGO}{\text{RGO}}
\newcommand{\TV}{\mathsf{TV}}
\newcommand{\KL}{\mathsf{KL}}
\newcommand{\FI}{\mathsf{FI}}

\newcommand{\deq}{\coloneqq}
\newcommand{\comp}{\mathsf{c}}
\newcommand{\eu}[1]{\EuScript{#1}}
\newcommand{\T}{\mathsf{T}}
\DeclarePairedDelimiter\abs{\lvert}{\rvert}
\DeclarePairedDelimiter\norm{\lVert}{\rVert}

\DeclareMathOperator\one{\mathbbm{1}}
\DeclareMathOperator\prox{prox}
\DeclareMathOperator\dist{dist}
\DeclareMathOperator\var{var}
\DeclareMathOperator\ent{ent}
\DeclareMathOperator\PI{PI}
\DeclareMathOperator\LSI{LSI}

\makeatletter
\def\blfootnote{\gdef\@thefnmark{}\@footnotetext}
\makeatother

\allowdisplaybreaks

\title{A proximal gradient algorithm for composite log-concave sampling}
 	 \author{
         Linghai Liu \\
         \texttt{\small linghai.liu@yale.edu} 
		 \and
		 Sinho Chewi \\
		 \texttt{\small sinho.chewi@yale.edu}
     }  
    \date{Yale University}

\begin{document}

	\maketitle

    \begin{abstract}
        We propose an algorithm to sample from composite log-concave distributions over $\bR^d$, i.e., densities of the form $\pi\propto e^{-f-g}$, assuming access to gradient evaluations of $f$ and a restricted Gaussian oracle (RGO) for $g$.
        The latter requirement means that we can easily sample from the density $\RGO_{g,h,y}(x) \propto \exp(-g(x) -\frac{1}{2h}\,\|y-x\|^2)$, which is the sampling analogue of the proximal operator for $g$.
        If $f + g$ is $\alpha$-strongly convex and $f$ is $\beta$-smooth, our sampler achieves $\eps$ error in total variation distance in $\widetilde{\mathcal O}(\kappa \sqrt d \log^4(1/\eps))$ iterations where $\kappa \deq \beta/\alpha$, which matches prior state-of-the-art results for the case $g=0$.
        We further extend our results to cases where (1) $\pi$ is non-log-concave but satisfies a Poincar\'e or log-Sobolev inequality, and (2) $f$ is non-smooth but Lipschitz.
    \end{abstract}
    
    \section{Introduction}

One of the cornerstones of convex optimization is the problem of composite optimization:
\[
\min_{x \in \bR^d} \, F(x)  \deq  f(x) + g(x) \, ,
\]
where $f$ is convex and smooth while $g$ is convex but possibly non-smooth. The proximal gradient method approaches this problem by the iteration \citep{parikh2014proximal}: 
\[
x^{k+1} = \prox_{h g}(x^k - h \nabla f(x^k)) \,, \qquad \prox_{hg}(x)  \deq  \arg\min_{y \in \bR^d}{\Bigl\{ g(y) + \frac{1}{2h}\, \norm{y-x}^2 \Bigr\}} \,.
\]
The smooth part $f$ admits a gradient step, whereas the non-smooth part $g$ is handled by the proximal map $\prox_{hg}$. If $g = \iota_{\eu C}$ is the convex indicator of a closed convex set $\eu C$, then the proximal step is the Euclidean projection onto $\eu C$, and the method becomes projected gradient descent; if $g = \lambda\,\norm{\cdot}_1$, then the method becomes the celebrated iterative shrinkage thresholding algorithm (ISTA)~\citep{beck2009fast}.

In our work, we are interested in the sampling counterpart of composite optimization, called composite log-concave sampling. Here, the goal is to sample from a distribution $\pi$ on $\bR^d$ with density
\[
\pi(x)\propto \exp(-F(x)) \deq \exp(-f(x)-g(x))\, .
\]
Therefore, it is natural to ask:
\begin{center}
    \textit{What is the analogue of the proximal gradient algorithm for composite log-concave sampling?}
\end{center}
Previous studies on log-concave sampling mainly focused on the (non-composite) case $g = 0$ with methods such as the unadjusted Langevin algorithm (ULA) \citep{dalalyan2017theoretical, durmus2017nonasymptotic, durmus2019high, vempala2019rapid, chewi2025analysis} and the Metropolis-adjusted Langevin algorithm (MALA) \citep{dwivedi2019log, chewi2021optimal, wu2022minimax}. In contrast, our goal is to incorporate the non-smooth component $g$ directly.

To deal with non-trivial $g$, a natural first approach is to smooth $g$ by replacing it with its Moreau--Yosida envelope
\[
g^h(y) \deq \inf_{x \in \bR^d}{\Bigl\{ g(x) + \frac{1}{2h}\, \norm{y-x}^2 \Bigr\}} \, ,
\]
where $h > 0$, so that $f+g^h$ is smooth and Langevin algorithms can be applied to the smoothed potential. This idea underlies the Moreau--Yosida regularized unadjusted Langevin algorithm (MYULA) \citep{pereyra2016proximal, brosse2017sampling, bernton2018langevin, durmus2018efficient}. However, this strategy converges to a biased limit $\pi^h$ (usually not equal to $\pi$) and suffers from poor dimensional dependence, especially in the case when $g = \iota_{\eu C}$ in which case it reduces to the projected Langevin algorithm \citep{bubeck2018sampling}. 

A more direct approach is the proximal sampler introduced in \citep{LeeSheTia21RGO}. Their seminal work puts forth that the sampling analogue of the proximal map of $g$ is the restricted Gaussian oracle (RGO), which returns an exact sample from a Gaussian-tilted version of $g$:
\begin{align*}
    \text{RGO}_{g,h,y}(x) \propto \exp\Bigl(-g(x) - \frac{1}{2h}\,\|y-x\|^2\Bigr)\,.
\end{align*}
A similar idea appeared in the earlier work of~\citep{mou2022efficient}, which used the RGO as a proposal distribution inside a Metropolis--Hastings algorithm; however, their algorithm requires computation of the normalizing constant of $\text{RGO}_{g,h,y}$ in addition to a sample. In \citep{LeeSheTia21RGO}, the proximal reduction framework considers the lifted distribution
\[
\tilde \pi (x,y) \propto \exp\Bigl( -f(x)-g(y) - \frac{1}{2h}\,\norm{x-y}^2 \Bigr)\,.
\]
Their algorithm applies Gibbs sampling to $\tilde\pi$, which amounts to alternating between the RGOs for $f$ and $g$, where the former is implemented via rejection sampling.
After running Gibbs sampling and keeping the $y$ sample, they still need to correct the resulting $y$-marginal $\tilde \pi^Y$ to the original target distribution $\pi$ by another rejection sampling step. Both algorithms in~\cite{mou2022efficient, LeeSheTia21RGO} achieve a complexity which scales polylogarithmically in $1/\eps$, where $\eps$ is the target accuracy, but incur $\widetilde{\cO}(d)$ dimension dependence.

More recently, works in the non-composite setting $g=0$ have developed faster RGO implementations which only incur dimension dependence $\widetilde{\cO}(\sqrt d)$~\citep{fan2023improved, altschuler2024faster}.
In particular, the rejection sampling schemes developed in~\citep{fan2023improved} also provide RGO implementations in the composite setting, but they require semi-smoothness assumptions on $g$ (e.g., $g$ is Lipschitz). This makes them unusable for our goal of making no assumptions on $g$, aside from convexity, in order to accommodate examples such as the convex indicator $\iota_{\eu C}$.

\paragraph{Contributions.}
In this paper, we propose an algorithm which is arguably closer to a true proximal gradient algorithm for composite log-concave sampling. Specifically, we consider applying the proximal sampler algorithm of~\citep{LeeSheTia21RGO}, except that we lift to the joint density 
\[
\pi(x,y) \propto \exp \Bigl( - f(x) - g(x) - \frac{1}{2h}\, \norm{x-y}^2 \Bigr) \,.
\]
Again, we perform Gibbs sampling, and this time the $x$-marginal is 
precisely our target density $\pi \propto e^{-f-g}$, i.e., the algorithm is asymptotically unbiased.
The non-trivial Gibbs sampling step requires sampling from the density
\begin{align*}
    \text{RGO}_{f+g,h,y}(x) \propto \exp\Bigl(-f(x)-g(x) -\frac{1}{2h}\,\|y-x\|^2\Bigr)\,.
\end{align*}
Here, our idea is to use as a proposal
\begin{align*}
    \text{RGO}_{g,h,y-h\nabla f(y)}(x) \propto\exp\Bigl(-f(y)-\langle\nabla f(y),x-y\rangle -g(x) -\frac{1}{2h}\,\|y-x\|^2\Bigr)\,.
\end{align*}
Then, we apply a Metropolis--Hastings correction step.
Note that our proposal directly takes inspiration from the proximal gradient algorithm by linearizing the smooth term, and our resulting algorithm uses the RGO for $g$ alone.

Our main result shows that our method yields an $\eps$-accurate sample in total variation distance to the target $\pi$ in $\widetilde{\cO}(\kappa \sqrt d \log^4(1/\eps))$ gradient evaluations, where $\kappa$ is the condition number (smoothness of $f$ divided by the strong convexity of $f+g$).
This is the first high-accuracy sampler for composite log-concave sampling with $\sqrt d$ dimension dependence---matching the state-of-the-art for the non-composite case $g=0$---and which does not impose smoothness assumptions on $g$. 

Along the way, we provide an analysis of the independent Metropolis--Hastings algorithm (i.e., Metropolis--Hastings in which the proposal does not depend on the current state) based on easily checkable R\'enyi divergence conditions, which could be of independent interest.

Finally, we extend our results to other standard settings, allowing for non-convex $f+g$ or non-smooth $f$, and we provide rudimentary experimental validation to demonstrate that the proposed algorithm is indeed implementable and samples reasonably well.

\subsection*{Other related works}

\paragraph{Composite sampling.}
Beyond the approaches discussed in the introduction, other works on composite log-concave sampling with a non-smooth component $g$ include~\citep{DurMajMia19LMCCvxOpt, SalKovRic19ProxLangevin, SalRic20ProxLangevin, HabHolPoc24Subgrad}. These works differ from ours in terms of the assumptions on the oracle. Specifically, ~\citep{SalKovRic19ProxLangevin, SalRic20ProxLangevin} assume access to $\prox_{hg}$ instead of the RGO for $g$, while~\citep{DurMajMia19LMCCvxOpt, HabHolPoc24Subgrad} study subgradient-based algorithms. Moreover, these methods use ULA-type discretizations or splittings with a constant step size, which incur dimensional dependence $\widetilde \cO(d)$ and polynomial dependence on $1/\eps$, as compared to high-accuracy samplers such as proximal sampler with $\text{polylog}(1/\eps)$ complexity dependence on the target accuracy $\eps$.

\paragraph{Proximal sampler.}
As discussed above,~\citep{mou2022efficient} proposed an algorithm based on the Metropolis--Hastings correction of the RGO, and the RGO was later incorporated into the proximal sampler framework in \citep{LeeSheTia21RGO}. The work \citep{chen2022improved} established convergence results for the ideal proximal sampler under weak log-concavity or functional inequalities, and we leverage these results in our analysis. Both papers~\citep{LeeSheTia21RGO, chen2022improved} considered simple rejection sampling implementations of the RGO, which incur dimension dependence $\widetilde{\cO}(d)$, and which were extended to composite/semi-smooth settings~\citep{LiaChe23Prox, Yuan+23Networks}.
Subsequently,~\citep{fan2023improved, altschuler2024faster} developed RGO implementations with dimension dependence $\widetilde{\cO}(\sqrt d)$, with the former paper working in the composite setting but in which $g$ is semi-smooth. More recently, \citep{chen2026high} showed that the RGO implementation does not require function evaluations (only gradient evaluations), and~\citep{Chen+26HighAccStoch} considered the role of stochastic gradient noise.

\paragraph{Sampling from convex bodies.} Sampling from a convex body $\eu C \subseteq \bR^d$ is a special case of composite log-concave sampling where we set $f \equiv 0$ and $g = \iota_{\eu C}$. The RGO then reduces to sampling from a Gaussian restricted to $\eu C$. Using this RGO, \citep{kook2024and} pioneered the use of the proximal sampler for sampling convex bodies. A line of works~\citep{kook2024and, Kook25Zeroth, KooVem25Integration, kook2025faster, kook2025renyi} then studied the complexity of implementing the RGO using membership queries to $\eu C$.
Also, \citep{dang2025oracle} studied RGO implementation given access to either a projection oracle or a separation oracle for $\eu C$.
The focus of our work is different, as we assume an RGO for $\eu C$ and show how this leads to an improved algorithm for composite log-concave sampling.

\section{Preliminaries}
\subsection{Divergences between probability measures}
We recall the divergences used throughout the paper to quantify distances between probability measures. 
\begin{defin}[R\'enyi divergence]
    \label{def:renyi-divergence}
    For two probability measures $\mu \ll \nu$ on $\bR^d$, the $p$-th order R\'enyi divergence ($p > 1$) is 
    \[
    \sfR_p(\mu \| \nu) = \frac{1}{p-1} \log \int \bigl(\frac{\dd\mu}{\dd\nu}\bigr)^p\, \dd \nu\,.
    \]
    Note that the R\'enyi divergence is also closely related to the KL and $\chi^2$ divergences: as $p \to 1$ we have $\sfR_1(\mu \| \nu)  \deq \KL(\mu \| \nu)$; for $p=2$ we have $\sfR_2(\mu \| \nu) = \log (1 + \chi^2(\mu \| \nu) )$, where 
    \[
    \KL (\mu\| \nu) = \int \frac{\dd \mu}{\dd \nu} \log \frac{\dd \mu}{\dd \nu} \dd \nu\,, \qquad \chi^2 (\mu \| \nu) = \int \bigl( \frac{\dd \mu}{\dd \nu}-1 \bigr)^2 \dd\nu = \int \bigl( \frac{\dd \mu}{\dd \nu} \bigr)^2 \dd\nu -1\,.
    \]
\end{defin}

\subsection{Functional inequalities}
We recall the definitions of the functional inequalities that we use.
\begin{defin}[Poincar\'e inequality]
    \label{def:poincare-inequality}
    We say that $\pi$ satisfies a Poincar\'e inequality (PI) with constant $1/\alpha$ if for all compactly supported and smooth test functions $\phi : \bR^d\to\bR$,
    \begin{align*}
        \var_\pi(\phi) \le \frac{1}{\alpha}\,\bE_\pi\bigl[\|\nabla\phi\|^2\bigr]\,.
    \end{align*}
\end{defin}
\begin{defin}[Log-Sobolev inequality]
    \label{def:log-sobolev-inequality}
    We say that $\pi$ satisfies a log-Sobolev inequality (LSI) with constant $1/\alpha$ if for all compactly supported and smooth test functions $\phi : \bR^d\to\bR$,
    \[
    \ent_\pi(\phi^2) \deq \bE_\pi \bigl[ \phi^2 \log \phi^2 \bigr] - \bE_\pi\bigl[\phi^2\bigr]\log \bE_\pi\bigl[\phi^2\bigr] \le \frac{2}{\alpha}\,\bE_\pi\bigl[\|\nabla\phi\|^2\bigr]\,.
    \]
\end{defin}
Under our notation, an LSI implies a PI with the same constant. In particular, by the Bakry--\'Emery criterion \citep{Bakry1985}, any $\alpha$-strongly log-concave distribution satisfies an LSI, and hence also a PI, with the constant $1/\alpha$.

\subsection{Proximal sampler and restricted Gaussian oracle}
Suppose our target distribution on $\bR^d$ has density $\pi \propto \exp(-V)$, where $V: \bR^d \to \bR \cup \{\infty\}$. Fix a step size $h >0$ and extend the distribution to $\bR^d \times \bR^d$ as: 
\[
\pi(x,y) \propto \exp\Bigl(-V(x) - \frac{1}{2h}\, \norm{x-y}^2 \Bigr)\,.
\]
The proximal sampler runs Gibbs sampling on this augmented distribution. 
\begin{defin}[Proximal sampler]
    \label{def:proximal-sampler}
    Initialize $X_0 \sim \mu_0$. For $k = 0, 1, 2, \ldots$:
    \begin{itemize}
        \item Sample $Y_k \sim \pi^{Y \mid X = X_k}(y) = \cN(X_k, hI_d) \propto \exp\left( -\frac{1}{2h}\, \norm{y - X_k}^2\right)$.
        \item Sample $X_{k+1} \sim \pi^{X \mid Y = Y_k}(x) \propto \exp\left( -V(x)-\frac{1}{2h}\, \norm{x - Y_k}^2 \right)$.
    \end{itemize}
\end{defin}
As we can see from the definition, in order to implement the proximal sampler, we need to be able to sample from both conditional distributions, specifically from $\pi^{X \mid Y= Y_k}$, whose density is a Gaussian tilt centered at $Y_k$ of the target distribution $\pi$. This idea is formulated in the following definition of the restricted Gaussian oracle \citep{LeeSheTia21RGO}.

\begin{defin}[Restricted Gaussian oracle]
    \label{def:RGO}
    A restricted Gaussian oracle for a convex function $\varphi:\bR^d \to \bR \cup \{\infty\}$ with parameter $h > 0$ and center $v \in \bR^d$ ($\RGO_{\varphi, h,v}$) returns a sample from the density proportional to
    \[
    \exp\Bigl(-\varphi(x) - \frac{1}{2h}\,\|x - v\|^2\Bigr)\,.
    \]
\end{defin}
The RGO is exactly the oracle needed to implement the second update of the proximal sampler with $\varphi = V$ and $v = Y_k$. In the composite setting $V = f + g$ of this paper, we do not assume access to the RGO for $V$. Instead, we only assume access to RGO for the non-smooth term $g$, together with gradient access to the smooth term $f$.

\section{Main results}
\begin{theorem}
    \label{thm:main-result}
    Let $\pi$ be a distribution on $\bR^d$ with density $\pi \propto e^{-f-g}$, where $f$ is $\alpha_f$-convex and $\beta$-smooth and $g$ is $\alpha_g$-strongly convex, with $\beta \ge \alpha_g \ge 0$. Let $\alpha  \deq  \alpha_f + \alpha_g > 0$, $\kappa  \deq 1 \vee \beta/\alpha$, and $\eps \in(0,1)$. Assume access to $x_\ast = \arg\min \{f + g\}$, $\prox_{g}$, and the RGO for $g$. Then, with a suitable choice of parameters, Algorithm~\ref{alg:outer-loop-Gibbs-Sampling} uses $\widetilde \cO(\kappa \sqrt{d} \log^4(1/\eps))$ evaluations of $f$, $\nabla f$, and the RGO for $g$, and achieves $\eps$ total variation distance to $\pi$. 
\end{theorem}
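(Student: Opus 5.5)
The proof combines the mixing of the ideal proximal sampler (Definition~\ref{def:proximal-sampler}) with an error analysis of an inexact implementation of its second step. The second step is implemented by independent Metropolis--Hastings: the proposal is $\RGO_{g,h,y-h\nabla f(y)}$ and the target is $\RGO_{f+g,h,y}$.

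\emph{Outer loop.} Since $f+g$ is $\alpha$-strongly convex, the results of \citep{chen2022improved} give contraction of the ideal proximal sampler. In $\chi^2$ (or R\'enyi) divergence, each step contracts by a factor $(1+\alpha h)^{-2}$. Hence $N=\widetilde\cO(\frac{1}{\alpha h}\log(1/\eps))$ outer iterations suffice from a warm-ish start. The initialization is $\cN(x_\ast,\beta^{-1}I)$, pushed through a proximal/RGO step so that it lies in the domain of $g$. Its initial R\'enyi divergence is $\widetilde\cO(d\log\kappa)$, which costs only a logarithmic factor. The errors of the inexact inner steps are then summed in total variation via the data-processing inequality: if each RGO call has TV error $\eps/(2N)$, the total error is at most $\eps$.

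\emph{Inner loop.} Fix $y$. Write $\mu=\RGO_{f+g,h,y}$ and $\nu=\RGO_{g,h,y-h\nabla f(y)}$. Then
\[
\frac{\dd\mu}{\dd\nu}(x)\propto \exp\bigl(-[f(x)-f(y)-\langle\nabla f(y),x-y\rangle]\bigr),
\]
and this weight is computable using only evaluations of $f$ and $\nabla f$. I would prove a general lemma for independent MH: if $\sfR_p(\mu\|\nu)$ and $\sfR_p(\nu\|\mu)$ are both $\cO(1)$ for some $p\ge 2$, then after $n$ steps started from $\nu$ the TV error decays like $\exp(-c\,n/\text{polylog})$. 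The reason is that the acceptance ratio $\min\{1,w(x')/w(x)\}$ is bounded below by a constant except on a set of small probability, where the tail is controlled by Markov's inequality on moments of $w$. Then $\cO(\log^{2}(N/\eps))$ inner steps suffice, possibly with an extra log factor from handling these bad events.

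\emph{Main obstacle: the R\'enyi bounds.} Under $\nu$ the Bregman term $D_f(x,y)$ lies in $[0,\frac{\beta}{2}\|x-y\|^2]$. The required bound would follow if $\nu$ concentrates with $\|x-y\|^2\lesssim hd+h^2\|\nabla f(y)\|^2$ and fluctuations of order $h\sqrt d$. A crude bound $\beta h d$ would force $h\sim 1/(\beta d)$. To reach $\sqrt d$ one needs, as in \citep{altschuler2024faster,fan2023improved}, that $D_f(x,y)-\bE_\nu D_f$ has fluctuations of order $\beta h\sqrt d$. This would be derived from the LSI of $\nu$, which is $(1/h)$-strongly log-concave since $g$ is convex. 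By Herbst's argument, the Lipschitz function $x\mapsto D_f(x,y)$ on the ball where $\|x-y\|\lesssim\sqrt{hd}$ has gradient of size at most $\beta\sqrt{hd}$. Its sub-Gaussian constant is therefore $h\cdot\beta^2hd$, which is $\cO(1)$ when $h\asymp 1/(\beta\sqrt d)$. Centering cancels in the normalized ratio, giving $\sfR_p=\cO(1)$.

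The term $h\|\nabla f(y)\|$ requires that $y$ stay in a high-probability region where $\|\nabla f(y)\|\lesssim\beta\sqrt{d/\alpha}$ up to logs. I would handle this via a warm-start/high-probability argument along the trajectory, which is likely the source of the extra $\log(1/\eps)$ powers. With $h\asymp 1/(\beta\sqrt d\,\mathrm{polylog})$, we get $N=\widetilde\cO(\kappa\sqrt d\log(1/\eps))$. Multiplying by the $\log^{3}$ inner cost gives the claimed total of $\widetilde\cO(\kappa\sqrt d\log^4(1/\eps))$ oracle calls.
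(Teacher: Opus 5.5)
Your architecture matches the paper's: the ideal proximal sampler, independent Metropolis--Hastings with the linearized proposal, R\'enyi control of the weight, and TV errors summed over the outer steps. However, two steps fail as stated.

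\textbf{The inner-loop lemma.} A bound $\sfR_p=\cO(1)$ at a fixed order $p$ does not give $\exp(-cn/\mathrm{polylog})$ decay for independent Metropolis--Hastings. Let $w\deq\dd\mu/\dd\nu$ in your notation, where $\nu$ is the proposal. The chain can enter $B=\{w>M\}$ only by accepting a proposal in $B$, and $\nu(B)\le\mu(B)/M$. Hence, after $n$ steps from $\nu$, the TV error is at least $\frac12\,\mu(w>2(n+1))$. A fixed-order R\'enyi bound allows this tail to decay only polynomially in $n$. Moreover, at $h\asymp1/(\beta\sqrt d)$ your own sub-Gaussian computation gives $\log w$ order-one fluctuations, so this tail is far larger than $\zeta$ for polylogarithmic $n$.

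What is needed is Theorem~\ref{thm:indep_mh}: $\sfR_q\le C_\sfR\,q^\gamma$ in both directions for all $q\lesssim\log(1/\zeta)$, with $C_\sfR\ll\log^{-(\gamma+1)}(1/\zeta)$. Then Markov's inequality gives $\{\abs{w-1}>1/8\}$ mass $\cO(\zeta^2)$. This is exactly the scale $s$ the $s$-conductance argument requires (lazy chain, Cheeger inequality for the target, pointwise projection), and $\cO(\log(1/\zeta))$ inner steps then suffice. Since $C_\sfR\asymp\beta^2dh^2\log(2\kappa)\log(1/\zeta)$, this forces $h\asymp1/(\beta\sqrt{d\log(2\kappa)}\,\log^2(1/\zeta))$. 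The extra logarithms therefore sit in the outer loop, $K=\widetilde\cO(\kappa\sqrt d\log^3(1/\eps))$. Your split (outer $\log^1$ with $h$ free of $\eps$, inner $\log^3$) is inconsistent with this.

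\textbf{Control of where the proposal lives.} $\RGO_{g,h,y-h\nabla f(y)}$ concentrates around $y^+=\prox_{hg}(y-h\nabla f(y))$. Your bound $\norm{x-y}^2\lesssim hd+h^2\norm{\nabla f(y)}^2$ ignores the displacement caused by $\prox_{hg}$, which for non-smooth $g$ is not controlled by $\nabla f(y)$. For example, with $g=\iota_{\eu C}$, the point $y=x_k+\sqrt h\,\xi$ typically lies outside $\eu C$. Even for the $f$-part, $\norm{\nabla f(y)}\lesssim\beta\sqrt{d/\alpha}$ only bounds the contribution $\beta^2h\cdot h^2\norm{\nabla f(y)}^2$ by $\beta^4h^3d/\alpha\asymp\kappa/\sqrt d$ at $h\asymp1/(\beta\sqrt d)$. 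That is not small when $\kappa\gg\sqrt d$ and would degrade the rate.

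The missing ingredient is the proximal-gradient descent inequality $F(y^+)\le F(y)-(\frac1h-\frac\beta2)\norm{y-y^+}^2$, i.e.\ $\norm{y-y^+}^2\le2h\,(F(y)-F_\ast)$. The paper applies it at $x_k\in\mathrm{dom}\,g$ and transfers it to $y_k$ by nonexpansiveness of $\prox_{hg}$. It combines this with $\bE_\pi e^{\lambda(F-F_\ast)}\le(1-\lambda)^{-d}$, so $F-F_\ast\lesssim d+\log(1/\zeta)$ with high probability under $\pi$. That concentration is transferred to every iterate through $\sfR_\infty(\rho_k\|\pi)\le\sfR_\infty(\rho_0\|\pi)\le\frac d2\log(2\kappa)$, which is why the initialization $\rho_0=\RGO_{g,(2\beta-\alpha_g)^{-1},x_\ast}$ matters. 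The result is $\norm{y_k-y_k^+}^2\lesssim hd\log(2\kappa)+h\log(1/\zeta)$, free of $\kappa$ up to logarithms.

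Relatedly, Herbst's argument needs a globally Lipschitz function, and $D_f(\cdot,y)$ is not one. The paper avoids truncation via Lemma~\ref{lem:renyi-upper-bound}, which needs only exponential moments of $\norm{\nabla f(x)-\nabla f(y)}^2\le\beta^2\norm{x-y}^2$.
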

\begin{remark} We make several clarifications on the assumptions of our main theorem.
\begin{itemize}
    \item The parameter $\alpha_f$ is allowed to be negative, as long as $\alpha_f \ge -\beta$ and $\alpha > 0$. In addition, we do not assume smoothness for $g$, so $g$ can be non-smooth.
    \item We can shift $f$ and $g$ so that $x_\ast$ becomes their shared minimizer, as established in~\citep[Proposition 23]{LeeSheTia21RGO}.
    Throughout the paper, we implicitly assume that this preprocessing step has been done.
    \item When applying Algorithm~\ref{alg:RGO-implementation}, we only need to compute the densities of the proposal and of the target ($\mu_k$ and $\nu_k$ respectively, in Algorithm~\ref{alg:outer-loop-Gibbs-Sampling}) up to proportionality.
    In particular, we do not require the normalizing constant for the RGO for $g$, unlike~\citep{mou2022efficient}.
\end{itemize}
\end{remark}
The proof of this theorem is in Appendix~\ref{subsection:proof-of-main-theorem}. The key ingredients of our theorem are the proximal sampler (Definition \ref{def:proximal-sampler}), stated in Algorithm \ref{alg:outer-loop-Gibbs-Sampling}, and the implementation of the conditional distribution $\pi^{X \mid Y}$ using the independent Metropolis--Hastings algorithm (Algorithm \ref{alg:RGO-implementation}) elaborated in \S\ref{sec:analysis-of-independent-metropolis-hastings}. We show that a larger choice of $h = \widetilde{\Theta}(1/(\beta \sqrt{d}))$ controls the R\'enyi divergence between the proposal and the target, which suffices for the analysis by Theorem~\ref{thm:indep_mh}. Since the iteration complexity of the proximal sampler typically scales as $\widetilde \cO(1/(\alpha h))$ (neglecting dependence on other parameters), our larger choice of $h$ improves the overall dependence on the dimension. 
\begin{algorithm}[ht]
    \caption{\textsc{CompositeSampler}$(f, g, h, K,\eps)$}\label{alg:outer-loop-Gibbs-Sampling}
    \begin{algorithmic}
        \Require $f$ $\alpha_f$-convex and $\beta$-smooth; $g$ $\alpha_g$-convex; step size $h > 0$; $K \in \bN^\ast$; $\eps > 0$.
        \State Initialize $x_0 \sim \rho_0 \propto \exp(-g(\cdot)-\frac{2\beta-\alpha_g}{2}\,\|\cdot-x_\ast\|^2) = \text{RGO}_{g,(2\beta-\alpha_g)^{-1},x_*}$.
        \For{$k = 0$ to $K-1$}
            \State Sample $y_{k} \sim \pi^{Y \mid X = x_k} = \cN(x_k, hI_d)$.
            \State Define $\nu_k(x) \deq \pi^{X \mid Y = y_k}(x) \propto \exp(-f(x)-g(x)-\frac{1}{2h} \norm{x-y_k}^2)$.
            \State Define $\mu_k(x) \deq \RGO_{g,h,y_k - h\nabla f(y_k)}(x) \propto \exp(-\langle \nabla f(y_k), x-y_k\rangle -g(x)- \frac{1}{2h}\norm{x-y_k}^2)$.
            \State Sample $x_{k+1}$ by \textsc{IndependentMetropolis}$(\nu_k, \mu_k, \eps)$.
        \EndFor
        \State \textbf{Return} $x_K$.
    \end{algorithmic}
\end{algorithm}

\paragraph{Extensions to other settings.}
We also extend our results to other standard settings. Here $\LSI(1/\alpha)$ and $\PI(1/\alpha)$ denote the log-Sobolev and Poincar\'e inequalities with constant $1/\alpha$, respectively.

\begin{theorem}\label{thm:extensions}
    Let $\pi$ be a distribution on $\bR^d$ with density $\pi \propto e^{-f-g}$ and $\eps \in (0,1)$. Assume access to $x_\ast = \arg\min \{f + g\}$, $\prox_g$, $\prox_{(f+g)}$, and the RGO of $g$, where $g$ is convex. 
    Then, there are algorithms that sample from $\pi$ up to $\eps$ error in total variation distance, with complexities given in the following table (up to hidden logarithmic factors).
    \begin{align*}
        \begin{array}{cccc}
             &\pi~\text{satisfies}~\LSI(1/\alpha) & \pi~\text{satisfies}~\PI(1/\alpha) & \pi~\text{log-concave} \\[0.25em]
             f~\text{is}~\beta\text{-smooth} & \kappa \sqrt d \log^5 \frac{\KL}{\eps}  & \kappa\sqrt d\,\bigl(\log\chi^2 + \log^5 \frac{1}{\eps}\bigr) & \frac{\beta\sqrt d\,W_2^2}{\eps^2} \\[0.5em]
             f~\text{is}~L\text{-Lipschitz} & \frac{L^2}{\alpha} \log^4 \frac{\KL}{\eps} & \frac{L^2}{\alpha} \,\bigl(\log\chi^2 + \log^4 \frac{1}{\eps}\bigr) & \frac{L^2\,W_2^2}{\eps^2}
        \end{array}
    \end{align*}
    Here, we use the shorthand $\KL \deq \KL(\rho_0\|\pi)$, $\chi^2 \deq \chi^2(\rho_0\|\pi)$, $W_2^2 \deq W_2^2(\rho_0,\pi)$, $\kappa \deq 1 \vee \beta/\alpha$. Also, in the Lipschitz case, we assume that $f$ is continuously differentiable.
\end{theorem}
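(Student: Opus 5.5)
The plan is to reuse the architecture behind Theorem~\ref{thm:main-result}: an outer proximal sampler (Definition~\ref{def:proximal-sampler}) whose $X\mid Y$ step is implemented inexactly by \textsc{IndependentMetropolis} with the proximal-gradient proposal $\mu_k$. Only two things change across the table: the outer mixing bound and the inner per-step guarantee.

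\emph{Outer loop.} For the ideal proximal sampler, I would invoke the convergence results of \citep{chen2022improved}.
\begin{itemize}
\item Under $\LSI(1/\alpha)$, KL contracts by a factor $(1+\alpha h)^{-2}$ per step. Hence $K=\widetilde\cO\bigl(\frac{1}{\alpha h}\log\frac{\KL}{\eps}\bigr)$ steps suffice, after which Pinsker converts KL to TV.
\item Under $\PI(1/\alpha)$, $\chi^2$ contracts at the same rate, giving $K=\widetilde\cO\bigl(\frac{1}{\alpha h}(\log\chi^2+\log\frac1\eps)\bigr)$.
\item In the merely log-concave case, the $W_2\to\KL$ regularization bound $\KL(\rho_K\|\pi)\lesssim W_2^2(\rho_0,\pi)/(Kh)$ gives $K\asymp W_2^2/(h\eps^2)$.
\end{itemize}
Errors from the inexact inner step accumulate additively in TV by a coupling/data-processing argument. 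Each inner call should therefore be run to accuracy $\eps/K$, which costs only extra logarithmic factors.

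\emph{Inner step, smooth case.} Here I would repeat the argument of the main theorem. With $h=\widetilde\Theta(1/(\beta\sqrt d))$, the Rényi divergence between $\mu_k$ and $\nu_k$, in both directions, is $\cO(1)$ with high probability over $y_k$, so Theorem~\ref{thm:indep_mh} gives polylogarithmically many MH steps. The main modification is that $f+g$ is no longer strongly convex. However, $\nu_k$ and $\mu_k$ are still strongly log-concave as long as $h\lesssim 1/\beta$, because of the $\frac{1}{2h}\|\cdot\|^2$ term. The ratio $\nu_k/\mu_k\propto\exp(-(f(x)-f(y_k)-\langle\nabla f(y_k),x-y_k\rangle))$ is controlled by $\beta\|x-y_k\|^2$. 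Under $\nu_k$, this quantity concentrates around $hd$ (with fluctuations of order $h\sqrt d$), which is exactly what forces $h\sqrt d\,\beta\lesssim1$. Plugging $1/(\alpha h)=\widetilde\cO(\kappa\sqrt d)$ into the outer bounds gives the first row. The non-convex parts of the table affect only the outer analysis, since the inner targets remain log-concave.

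\emph{Inner step, Lipschitz case.} Now the linearization error is bounded by $|f(x)-f(y)-\langle\nabla f(y),x-y\rangle|\le 2L\|x-y\|$. I would center the comparison instead: write $f(x)-f(y_k)-\langle\nabla f(y_k),x-y_k\rangle$ and control its fluctuation under $\mu_k$ and $\nu_k$. Both measures satisfy $\LSI(h)$ and the function is $2L$-Lipschitz, so Gaussian concentration bounds the log-ratio fluctuations by $\cO(L\sqrt h)$. Choosing $h\asymp 1/L^2$ (up to logs) makes the Rényi divergences $\cO(1)$ with no dimension dependence. This gives $1/(\alpha h)=L^2/\alpha$ and hence the second row.

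\emph{Initialization.} The oracle $\prox_{(f+g)}$ and $x_\ast$ are used to produce a warm $\rho_0$, for instance $\mathrm{RGO}_g$ centered at $x_\ast$, so that the $\KL$, $\chi^2$ and $W_2$ quantities in the table are finite. Only logarithmic dependence on these quantities is tracked.

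\emph{Expected main obstacle.} Verifying the hypotheses of Theorem~\ref{thm:indep_mh} uniformly over the random $y_k$ is the hard part, now without the strong convexity of $f+g$ that the main proof presumably used to localize $y_k$. I would first establish a high-probability bound on $\|\nabla f(y_k)\|$, or on the proximity of $y_k$ to typical regions, using the fact that $y_k$ is approximately distributed as $\pi^Y$. Then I would absorb the failure probability into the TV budget. In the Lipschitz case the concentration argument avoids this issue entirely.
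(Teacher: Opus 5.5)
Your outer loop and your Lipschitz row are fine and essentially match the paper. In the Lipschitz case the bound $\sfR_p(\mu_k\|\nu_k)\vee\sfR_p(\nu_k\|\mu_k)\lesssim p^2hL^2$ is uniform in $y_k$, so Theorem~\ref{thm:indep_mh} applies at every step with $h\asymp 1/(L^2\log^3(1/\zeta))$. Your centering argument is even slightly cleaner than the paper's. Writing $\nu_k\propto e^{-\psi}\mu_k$, with $\psi$ the $2L$-Lipschitz linearization error, sub-Gaussianity of $\psi$ under the $1/h$-strongly log-concave $\mu_k$ alone bounds both R\'enyi directions. So the LSI for the non-log-concave $\nu_k$, which you assert without proof, is not needed.

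\textbf{The gap is in the smooth row.} In your single-level scheme with $h=\widetilde\cO(1/(\beta\sqrt d))$, the mismatch between $\mu_k$ and $\nu_k$ is $\beta^2p^2h\norm{y_k-y_k^+}^2+\beta^2p^2dh^2$. The first term is real, not an artifact. For $g=0$ and $f=\frac{\beta}{2}x_1^2$, the means of $\mu_k$ and $\nu_k$ differ by about $\beta h^{3/2}\norm{\nabla f(y_k)}$ standard deviations, so for atypical $y_k$ the independent Metropolis chain does not mix in polylogarithmically many steps. You therefore must localize $y_k$ with failure probability $\cO(\eps/K)$ at every outer step.

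In the proof of Theorem~\ref{thm:main-result}, this localization came from two ingredients:
\begin{itemize}
\item concentration of $F-F_\ast$ at scale $d$ under $\pi$ (Lemma~\ref{lem:concentration-F-gap}, which uses convexity of $F$);
\item an $\sfR_\infty$-warm start (Lemma~\ref{lem:initial-distribution-outer-loop}), propagated to all iterates by data processing and Lemma~\ref{lem:change-of-measure-lemma}.
\end{itemize}
Under $\LSI$ or $\PI$ the potential need not be convex, and the table allows any $\rho_0$ with only $\KL$, $\chi^2$ or $W_2^2$ finite. Early iterates are then nowhere near $\pi^Y$. With only $\KL(\rho_k\|\pi)\le\KL(\rho_0\|\pi)$ you get $\rho_k(\cE^\comp)\lesssim(\KL+1)/\log(1/\pi(\cE^\comp))$. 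Demanding failure probability $\eps/K$ then forces a localization radius polynomial in $K/\eps$, which is incompatible with your step size. Your ``expected main obstacle'' paragraph names this problem but does not resolve it.

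\textbf{The missing idea is to nest Theorem~\ref{thm:main-result} inside the proximal sampler, rather than re-run its proof along the outer chain.}
\begin{itemize}
\item Take the outer step $h=1/(2\beta)$. Then $f_h\deq f+\frac{1}{2h}\norm{\cdot-y}^2$ is $\beta$-strongly convex and $3\beta$-smooth.
\item Hence every $\pi^{X\mid Y=y}\propto e^{-f_h-g}$ is a composite, strongly log-concave target with condition number at most $3$, \emph{for every} $y$.
\item Theorem~\ref{thm:main-result} samples each such target to TV error $\zeta$ with $\widetilde\cO(\sqrt d\log^4(1/\zeta))$ queries. It uses its own $\sfR_\infty$-warm start centered at the mode $\prox_{h(f+g)}(y)$.
\item That is what the $\prox_{(f+g)}$ oracle in the hypotheses is for, not the outer initialization as you suggest.
\end{itemize}
Because this inner guarantee is uniform in $y$, nothing about the outer iterates needs to be localized. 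The outer chain needs only Lemma~\ref{lem:KL-convergence-outer-loop}, giving $K=\cO(\kappa\log(\KL/\eps))$ steps under $\LSI(1/\alpha)$, with analogous $\chi^2$ and $W_2^2$ counts. Taking $\zeta=\eps/(6K)$ then yields the first row. The factor $\sqrt d$ comes from the inner cost, and the fifth logarithmic power, versus the fourth in Theorem~\ref{thm:main-result}, comes precisely from the nesting.
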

The proof of this theorem is in Appendix~\ref{subsection:proof-of-main-theorem}.

\section{Analysis of the independent Metropolis--Hastings algorithm}
\label{sec:analysis-of-independent-metropolis-hastings}
The analysis of our RGO implementation is based on an analysis of the independent Metropolis--Hastings (MH) chain, that is, the special case of the MH chain in which the proposal distribution does not depend on the current state.
Since this could be of independent interest, we isolate this part of our proof into the following general result.

\begin{theorem}\label{thm:indep_mh}
    There is an absolute constant $C > 0$ such that the following holds. Let $\eps > 0$ and assume that our target distribution $\pi$ satisfies the Cheeger isoperimetric inequality with some constant $\mathsf{Ch} > 0$ and that the independent proposal kernel $\mu$ satisfies 
    \[
    \sfR_q(\mu \| \pi) \le C_\sfR q^\gamma \quad \text{and} \quad \sfR_q(\pi \| \mu) \le C_\sfR q^\gamma
    \]
    for all $1 \le q \le C\log(1/\eps)$, an absolute constant $\gamma > 0$, and some constant $C_\sfR > 0$ such that $C_\sfR\ll (\log(1/\eps))^{-(\gamma+1)}$. Assume further that both $\mu$ and $\pi$ are atomless. Then, the independent Metropolis--Hastings algorithm returns a sample whose marginal distribution is $\eps$-close to $\pi$ in total variation distance in $N = \cO(\log(1/\eps))$ iterations.
\end{theorem}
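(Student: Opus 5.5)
The plan is to prove geometric convergence of the independent MH chain through a Doeblin-type minorization on a ``good set'' where the importance weight is close to $1$. The Rényi hypotheses will control how much mass lies outside that set. I expect to need only the Rényi hypotheses and atomlessness, not the Cheeger constant $\mathsf{Ch}$; if the minorization step below breaks down, the fallback is a conductance argument that uses $\mathsf{Ch}$. I take the chain to start from $X_0\sim\mu$, as in Algorithm~\ref{alg:outer-loop-Gibbs-Sampling}.

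\textbf{Setup.} Let $w \deq \dd\pi/\dd\mu$. The independent MH kernel accepts a proposal $y\sim\mu$ from state $x$ with probability $\min\{1, w(y)/w(x)\}$. Atomlessness of $\mu$ and $\pi$ makes this kernel well defined and lets us ignore ties and self-loops of positive mass. Define the good set
\[
G \deq \{x : \tfrac12 \le w(x) \le 2\}.
\]

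\textbf{Step 1: tails of the weight.} Markov's inequality applied to $w^{q-1}$, with the moments bounded by the Rényi hypotheses, gives
\[
\pi(w>2) \le \exp\bigl(-(q-1)(\log 2 - C_\sfR q^\gamma)\bigr).
\]
The same argument applied to $(1/w)^{q-1}$ under $\mu$ bounds $\mu(w<1/2)$, and the cross terms $\pi(w<1/2)\le \tfrac12\mu(w<1/2)$ and $\mu(w>2)\le \tfrac12\pi(w>2)$ are immediate. Take $q \asymp \log(1/\eps)$, the largest value allowed in the hypothesis. The assumption $C_\sfR \ll (\log(1/\eps))^{-(\gamma+1)}$ makes $C_\sfR q^\gamma \ll 1/\log(1/\eps)$, which is certainly at most $\tfrac12\log 2$. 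Hence both $\pi(G^\comp)$ and $\mu(G^\comp)$ are at most $\eps^{c}$ for a constant $c$ that I can make large by enlarging $C$. In fact I expect this step to be easier than the hypothesis allows, since only $C_\sfR q^\gamma \lesssim 1$ seems to be needed.

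\textbf{Step 2: minorization on $G$.} Fix $x\in G$ and any measurable $A$. For $y\in G$ we have $w(y)/w(x)\ge w(y)/2$, and $w(y)/2\le 1$. Therefore
\[
P(x,A) \ge \int_{A\cap G} \tfrac12\, w(y)\,\mu(\dd y) = \tfrac12\,\pi(A\cap G).
\]
So from any point of $G$ the chain regenerates from $\pi|_G$ with probability at least $1/2$ at every step.

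\textbf{Step 3: staying in $G$.} Since $\pi$ is stationary, the data-processing inequality gives $\sfR_q(\mu P^k\|\pi)\le\sfR_q(\mu\|\pi)\le C_\sfR q^\gamma$ for all $k$. By Hölder,
\[
\bP(X_k\notin G) \le \exp\Bigl(\tfrac{q-1}{q}\,\sfR_q(\mu P^k\|\pi)\Bigr)\,\pi(G^\comp)^{1-1/q} \lesssim \eps^{c'}.
\]
A union bound over $k\le N$ shows that, except on an event of probability $N\eps^{c'}$, all iterates lie in $G$.

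\textbf{Step 4: coupling.} Run a coupling in which, whenever the current state is in $G$, with probability $1/2$ the next state is drawn from the normalized measure $\pi|_G/\pi(G)$, and the same draw is used for a stationary copy of the chain when that copy is also in $G$. This gives
\[
\TV(\mu P^N,\pi) \le 2^{-N} + \cO(N\eps^{c'}) + \cO(\pi(G^\comp)).
\]
Choosing $N = \cO(\log(1/\eps))$ makes the right-hand side at most $\eps$.

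\textbf{Main obstacle.} The delicate step is making the coupling in Step 4 rigorous when the chain can leave $G$. I would handle this by comparing with the chain killed on exiting $G$, or by writing $P = \tfrac12 Q_G + R$ on $G$ with $Q_G$ the regeneration kernel, and charging every exit to the union bound of Step 3. A secondary check is that the constants in Step 1 indeed yield $\eps^{c}$ with $c$ large enough to absorb the factor $N$.
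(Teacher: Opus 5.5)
Your proposal is correct, and it takes a genuinely different and more elementary route than the paper. The paper follows what you list as your fallback. It lower-bounds the $s$-conductance of the $\frac12$-lazy chain through the overlap lemma (Lemma~\ref{lem:overlap-s-conductance}). For this it uses the pointwise projection property to get $\|P_x-Q_x\|_\TV\le 2\,\|\mu-\pi\|_\TV+|\mu(x)/\pi(x)-1|$ on the good set $E=\{|\mu/\pi-1|\le 1/8\}$, whose complement is made smaller than $s/16$ by the same Markov/R\'enyi argument as your Step~1. It then takes $r=\mathsf{Ch}$ so that $\fc_s\gtrsim 1$, and concludes with Lov\'asz--Simonovits from the warm start $\chi^2(\mu\|\pi)=\cO(1)$ with $s\asymp\eps^2$.

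You replace all of this with the Doeblin minorization $P(x,\cdot)\ge\frac12\,\pi(\cdot\cap G)$ for $x\in G$, which exists precisely because the proposal does not depend on the current state. You pair it with a coupling in which exits from $G$ are charged through $\sfR_q(\mu P^k\|\pi)\le\sfR_q(\mu\|\pi)$ and change of measure. Your route buys several things:
\begin{itemize}
\item It shows the Cheeger hypothesis is superfluous here; in the paper $\mathsf{Ch}$ cancels anyway since $r=\mathsf{Ch}$.
\item It avoids atomlessness, the pointwise projection lemma, and any bound on $\|\mu-\pi\|_\TV$.
\item As you observe, it only needs $C_\sfR q^\gamma\lesssim 1$ rather than $C_\sfR q^{\gamma+1}\lesssim 1$. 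The paper bounds $\exp((q-1)\,\sfR_q)$ by $\exp(C_\sfR q^{\gamma+1})$ and requires this to be $\cO(1)$, whereas you keep $C_\sfR q^\gamma$ inside the exponent and compare it against $\log 2$.
\end{itemize}
The paper's route, in turn, sits in the standard conductance framework. That framework extends to state-dependent proposals, where no common minorizing measure exists and isoperimetry is genuinely needed.

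A few small corrections, none of which changes the conclusion:
\begin{itemize}
\item The chain analyzed is the $\frac12$-lazy chain of Algorithm~\ref{alg:RGO-implementation} (the one initialized at $\mu$), so your minorization constant becomes $\frac14\,\pi(G)$.
\item Even for the non-lazy chain, the regeneration probability is $\frac12\,\pi(G)$, not $\frac12$.
\item The stationary copy contributes $N\pi(G^\comp)$, not $\cO(\pi(G^\comp))$, to the union bound.
\item The minorization only uses $w\le 2$, so $G=\{w\le 2\}$ would suffice.
\end{itemize}

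The obstacle you flag in Step~4 goes away with the standard construction. At each step draw a fresh coin $B_k\sim\mathrm{Bernoulli}(\frac14\,\pi(G))$, and move the two copies as follows:
\begin{itemize}
\item if they already agree, move them together;
\item if both lie in $G$ and $B_k=1$, move both to a common draw from $\pi(\cdot\cap G)/\pi(G)$;
\item if both lie in $G$ and $B_k=0$, move each by its residual kernel, which is nonnegative by Step~2;
\item otherwise, move them independently by $P$.
\end{itemize}
Each copy is then a faithful chain, and $\{X_N\ne Y_N\}\subseteq\{B_0=\cdots=B_{N-1}=0\}\cup\bigcup_{k<N}\{X_k\notin G\text{ or }Y_k\notin G\}$. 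This gives $(1-\frac14\,\pi(G))^N+\sum_{k<N}\bigl(\bP(X_k\notin G)+\pi(G^\comp)\bigr)$ directly, with no killed chain needed.
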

\begin{algorithm}[ht]
    \caption{\textsc{IndependentMetropolis}$(\pi, \mu, \eps)$}
    \label{alg:RGO-implementation}
    \begin{algorithmic}
        \Require Target density $\pi$; proposal $\mu$; $\eps > 0$.
        \State Define $N = \cO(\log (1/\eps))$.
        \State Initialize $x_0 \sim \mu$.
        \For{$n=0$ to $N-1$}
            \State Sample $U \sim \mathrm{Unif}(0,1)$. 
                \State Sample $z \sim \mu$.
                \State Compute $\alpha(x_n,z) = \frac{1}{2}\min \bigl(1,\, \frac{\pi(z)\, \mu(x_n)}{\pi(x_n)\, \mu(z)}\bigr)$.
                \If{$U \le \alpha(x_n,z)$}
                    \State $x_{n+1} \gets z$.
            \Else
                \State $x_{n+1} \gets x_n$.
            \EndIf
        \EndFor
        \State \textbf{Return} $x_N$.
    \end{algorithmic}
\end{algorithm}
The proof is in Appendix~\ref{app:proof-of-indep-mh}. The crucial part of the analysis is the $s$-conductance $\fc_s$ of the MH chain with the proposal $\mu$ independent of the current state. By using the $\frac{1}{2}$-lazy version of the MH chain, as reflected in the algorithm, we show that $\fc_s \gtrsim 1$. As the target distribution satisfies the Cheeger isoperimetric inequality, together with control of the R\'enyi divergence, we obtain the dimension-free complexity.

\section{Examples}

We give examples of functions $g$ for which the RGO for $g$ is easily implementable; see Appendix~\ref{app:rgo_implementation} for further details for some of the examples.
\begin{itemize}
    \item If $g_1,\dotsc,g_k$ admit easily implementable RGOs and $g(x_1,\dotsc,x_k)\deq \sum_{i=1}^k g_i(x_i)$, then $\RGO_{g, h, y} = \bigotimes_{i=1}^k \RGO_{g_i, h, y_i}$.
    In particular, since RGOs for one-dimensional functions are generally easy to implement (e.g., via rejection sampling), separable functions admit implementable RGOs. This covers numerous examples such as indicators of boxes and the $\ell_1$ penalty $g(x) \deq \lambda\,\norm x_1$.
    \item If $g(x) \deq \tilde g(x-c)$ for some $c\in\bR^d$, then $\RGO_{g,h,y} = (x\mapsto x+c)_\#\RGO_{\tilde g, h, y-c}$.
    \item If $g(x) \deq \tilde g(x) + \frac{a}{2}\,\norm x^2 + \langle b, x\rangle + c$, then $\RGO_{g, h, y} = \RGO_{\tilde g, \frac{h}{1+ah}, \frac{y-hb}{1+ah}}$.
    \item If $g(x) \deq \bar g(Bx)$ for a matrix $B \in \bR^{k\times d}$, then a sample from $\RGO_{g, h, y}$ is produced as follows. First, sample $\bar x \in \bR^k$ from the density
    \begin{align*}
        \mu(\bar x) \propto \exp\bigl(-\bar g(\bar x)-\frac{1}{2h}\,\langle \bar x-By, (BB^\T)^{-1}\,(\bar x-By)\rangle\bigr)\,.
    \end{align*}
    Then, sample $x \sim \cN(y+B^\T (BB^\T)^{-1} (\bar x-By),\, h (I_d-B^\T (BB^\T)^{-1} B))$.

    In particular, when $k=1$ and $B = b^\T$, $\bar\mu = \RGO_{\bar g, h\norm b^2, \langle b,y\rangle}$ is a one-dimensional RGO. This covers examples such as convex indicators of half-spaces $\{\langle b,\cdot\rangle \le c\}$ and slabs $\{\underline c \le \langle b,\cdot\rangle \le \overline c\}$.
    \item For a quadratic $g(x) \deq \frac{1}{2}\,\langle x,A\,x\rangle + \langle b,x\rangle + c$, $\RGO_{g, h, y} = \cN(A_h(y/h-b), A_h)$, where $A_h^{-1} = A + h^{-1} I_d$, provided that $A + h^{-1} I_d \succ 0$.
    \item The $\ell_\infty$ penalty $g(x) \deq \lambda\,\norm x_\infty$ also admits a closed-form sampler (see Appendix~\ref{app:rgo_implementation}).
\end{itemize}

\section{Numerical experiments}

We test the effectiveness of Algorithm~\ref{alg:outer-loop-Gibbs-Sampling} (Composite Sampler) with two baseline algorithms~\citep{pereyra2016proximal}, namely, the proximal Metropolis-adjusted Langevin algorithm (Prox--MALA), which adds a Metropolis--Hastings correction to the proposal
\begin{align*}
    Q(x,\cdot) = \cN(\prox_{hg}(x-h\,\nabla f(x)), 2hI_d)\,,
\end{align*}
and the proximal gradient Langevin algorithm (PGLA), whose iterates are 
\begin{align*}
    x_{k+1} = \prox_{hg}(x_k - h\,\nabla f(x_k) + \cN(0, 2hI_d))\,.
\end{align*}
We tune the step sizes for each method to optimize performance.
For both experiments, we take $f$ to correspond to the negative log-likelihood function for logistic regression:
\begin{align*}
    f(x) = \sum_{i=1}^n \bigl\{\log(1+\exp(\langle a_i,x\rangle)) - y_i\,\langle a_i,x\rangle\bigr\} + \frac{\tau}{2}\,\norm x_2^2\,,
\end{align*}
where $\tau > 0$, and $a_i \in \bR^d$ are the rows of the design matrix $A \in \bR^{n \times d}$.

As this is primarily a theoretical paper, the following three experiments are not meant to be comprehensive and only serve to illustrate a proof of concept. The Jupyter notebook containing the code for the numerical experiments is available in the supplementary material. The code chunks run in less than a minute or two in total by the default runtime type on Google Colab for the first two, and it takes around $10$ minutes to run the third.

\subsection{Sparse Bayesian logistic regression with \texorpdfstring{$\ell_1$}{l1} prior}

Here, we take the $\ell_1$ prior $g = \lambda\,\norm \cdot_1$. We use the following settings: $d=36$, $n=360$, $\tau=0.2$, $\lambda =7$; the design matrix $A$ has rows drawn from $\cN(0,\Sigma)$ rescaled to have norm $\sqrt n$, where $\Sigma_{i,j} = \rho^{\abs{i-j}}$ and $\rho = 0.65$; the ground truth is chosen to be sparse: $x_{\rm true} \deq (1, -1, 0.8, 1.2, -0.9,0,\dotsc,0)$; and the labels $\{y_i\}_{i\in [n]}$ are drawn from the logistic regression model with ground truth $x_{\rm true}$.

In Figure~\ref{fig:logistic_l1}, we plot the running root mean-squared error (RMSE) $\frac{1}{\sqrt{d}} \,\norm{ \frac{1}{k}\sum_{j=1}^k x_j - \widehat x}_2$,  where $\widehat x$ is the posterior mean, as well as 90\% coverage intervals for the coordinates.
The ground truths are computed from a long, conservative run of Prox-MALA.
For the sake of fair comparison, we plot the performance as a function of gradient evaluations; see Appendix~\ref{app:experiments} for more details.

\begin{figure}[ht]
    \centering
    \includegraphics[width=0.49\linewidth]{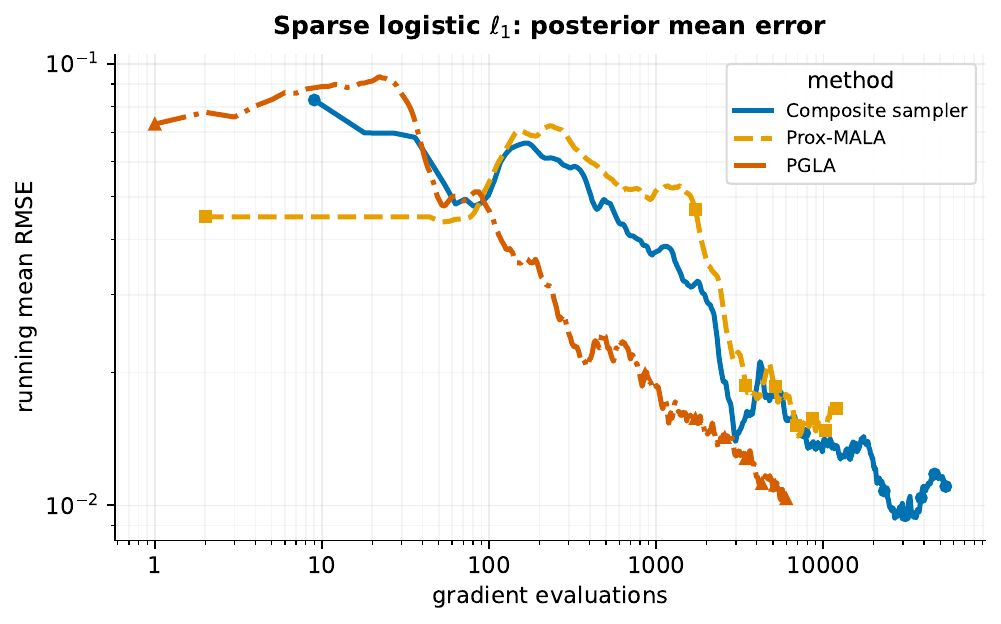}
    \includegraphics[width=0.49\linewidth]{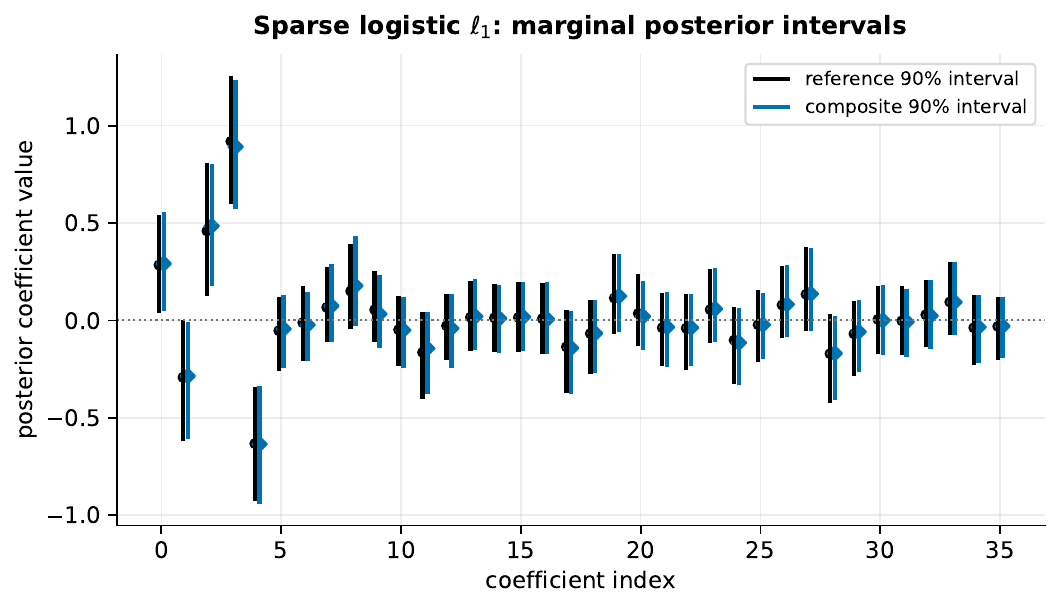}
    \caption{(Left) RMSE comparison between the Composite Sampler, Prox-MALA, and PGLA for sparse Bayesian logistic regression with an $\ell_1$ prior. (Right) Marginal posterior coverage of the Composite Sampler.}
    \label{fig:logistic_l1}
\end{figure}

The experiment reveals that our composite sampler is competitive with Prox-MALA, and achieves good marginal coverage.
In this experiment, PGLA noticeably converges to a smaller RMSE with fewer iterations; however, this is not the end of the story.
Due to the application of $\prox_{hg}$, 11.3\% of the coordinates of PGLA were \emph{exactly} $0$ at the end of the run, which does not accurately reflect the posterior distribution. Indeed, PGLA suffers from asymptotic bias for $h > 0$.

\subsection{Bayesian logistic regression with box constraints}

Here we take the convex indicator of a box, $g = \iota_{[-R,R]^d}$ and use the following settings: $d=24$, $n=360$, $\tau=0.2$, and $R=0.35$. The data is generated similarly as before, except that $\rho = 0.55$ and the ground truth vector is $x_{\rm true} = (3, -3, 3, -3, 0,\dotsc, 0)$.

In this setting, the Composite Sampler clearly outperforms Prox-MALA, as shown in Figure~\ref{fig:logistic_box}; this appears to be because many of the proposals for Prox-MALA are infeasible, which is not a problem for the Composite Sampler since it samples each coordinate from the truncated Gaussian directly. PGLA is competitive in terms of RMSE but again suffers from bias: 10.9\% of the final coordinates lie on the boundary.

\begin{figure}[ht]
    \centering
    \includegraphics[width=0.55\textwidth]{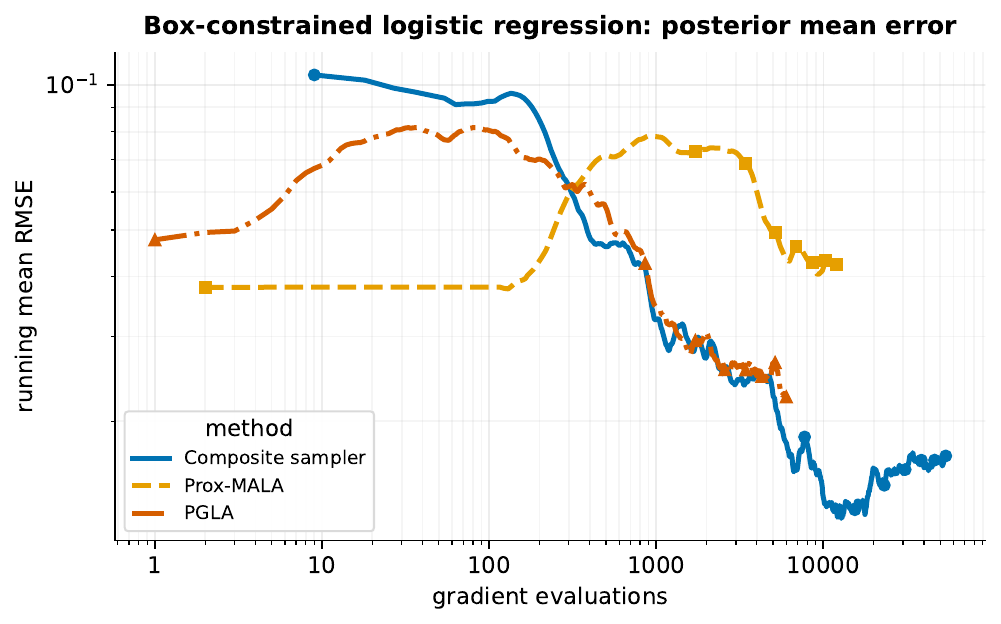}
    \caption{RMSE comparison between the Composite Sampler, Prox-MALA, and PGLA for sparse Bayesian logistic regression with a box constraint.}
    \label{fig:logistic_box}
\end{figure}

\subsection{Gaussian distribution with box constraints}

In this experiment, we take the convex indicator of a box, $g = \iota_{[-R,R]^d}$, and let $\kappa = 1$. Specifically, our target is $\pi(x) \propto \exp(-\frac{1}{2}\norm{x}^2)\mathbbm{1}_{[-R,R]^d}$ with $R = 1$. We then vary $d \in \{4, 8, 16, \ldots, 512\}$, and run each method with $3$ random seeds. Convergence is measured by sliced $W_2$ distance against the target $\pi$. For each method, we set the maximum number of iterations to be $10^6$.

Figure \ref{fig:gaussian-box} reports the results. It is worth noting that the Composite Sampler reaches the threshold at every $d$ tested, and the cost grows roughly as $\sqrt{d}$ — fitting a line through the eight points gives a log-log slope of approximately $0.5$, which agrees with Theorem~\ref{thm:main-result}. Prox-MALA reaches the threshold within budget for $d \le 32$; PGLA also only reaches the threshold for some $d$'s, but the dependence of complexity on $d$ is not clear, which may be the variability brought by random seeds.
\begin{figure}[ht]
    \centering
    \includegraphics[width=0.49\linewidth]{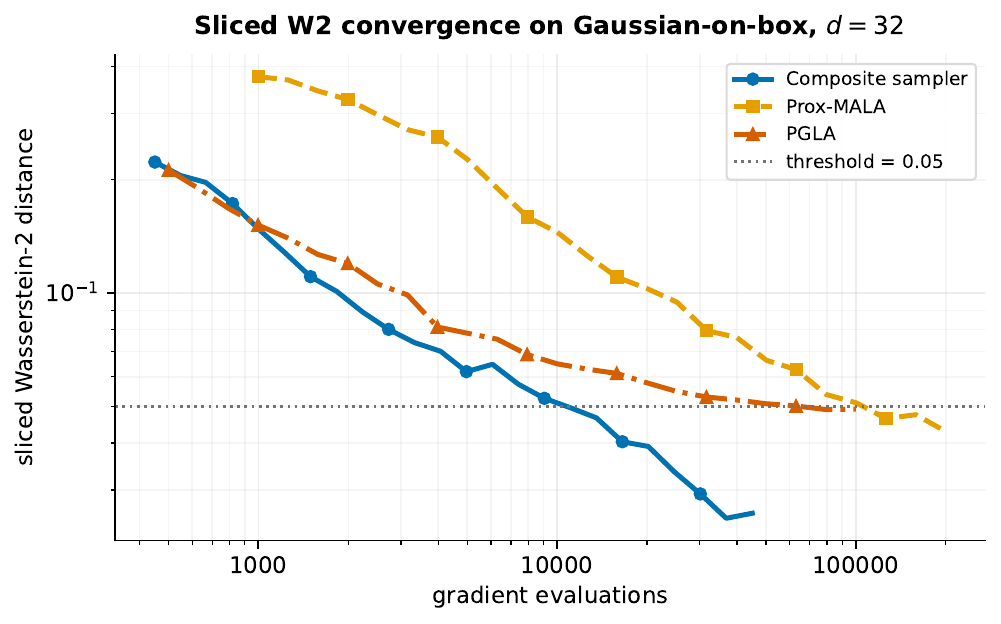}
    \includegraphics[width=0.49\linewidth]{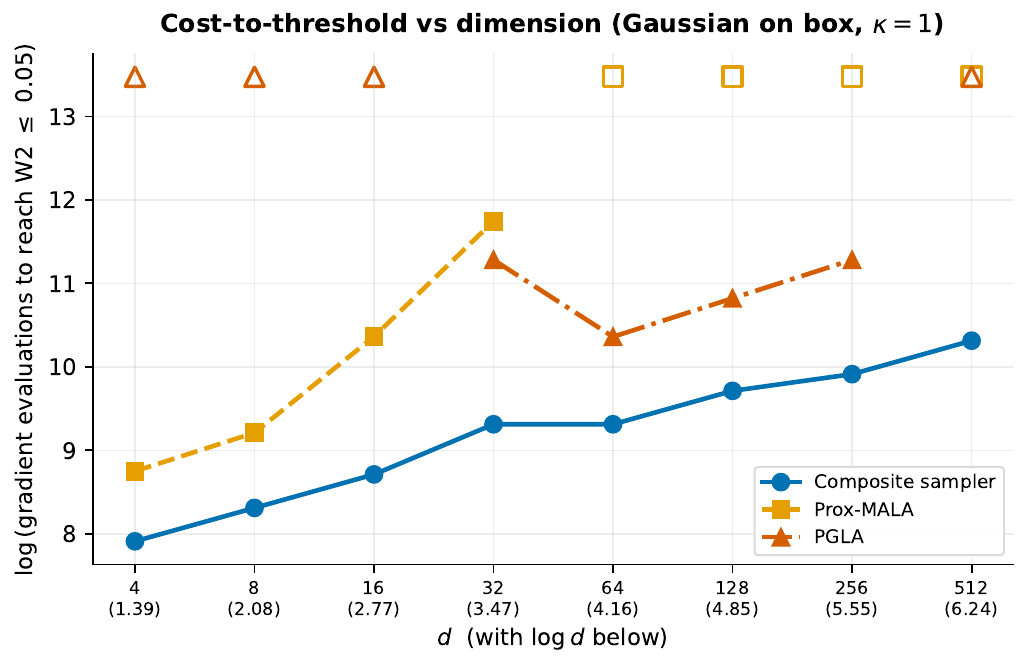}
    \caption{(Left) Sliced $W_2$ distance to the exact target versus gradient evaluations, $d=64$. (Right) Gradient evaluations needed to first reach the threshold for each method, plotted against $d$ on log-log axes. Hollow markers near the top of the right panel denote runs that did not reach the threshold within the per-method gradient budget.}
    \label{fig:gaussian-box}
\end{figure}

\section{Conclusion}

In this paper, we have introduced an algorithm for sampling from composite log-concave target distributions of the form $\pi \propto e^{-f-g}$, inspired by the proximal sampler \citep{LeeSheTia21RGO, chen2022improved}. With access to the restricted Gaussian oracle (RGO) of the potentially non-smooth component $g$, we have shown that our method achieves high-accuracy sampling guarantees with improved dimensional dependence $\widetilde \cO(\sqrt{d})$, matching the current state-of-the-art results for $g \equiv 0$. The key insight is to linearize $f$ in the RGO to control the R\'enyi divergence between the true RGO and our implementation via the tractable proposal ($\RGO_{g,h,y-h\nabla f(y)}$) at step size $h = \widetilde \Theta (1/(\beta \sqrt{d}))$. This larger step size is precisely what drives the improved dimensional dependence. Moreover, we provide verifiable R\'enyi divergence conditions for independent Metropolis--Hastings in our analysis of the implemented RGO. We have also generalized our results to settings in which $\pi$ satisfies weaker conditions, such as isoperimetric inequalities, and to cases in which $f$ is Lipschitz. 

\paragraph{Limitations.} 
Several questions still remain open. Our algorithms assume that the non-smooth component $g$ admits an efficient RGO. While we have given many common examples, including $\ell_1$ and $\ell_\infty$ penalties, box and half-space indicators, separable functions, and quadratics, it may still be difficult for more complicated composite structures. Our experimental results, while encouraging, are preliminary, and a more extensive study such as in high-dimensional Bayesian inverse problems and constrained sampling settings would help to fully understand the performance of our algorithms in practice at scale. A natural future direction could be an adaptive variant of the step size $h$, which would make the implementation easier, since our current choice $h\asymp 1/(\beta \sqrt{d\log(2\kappa)} \log^2(1/\zeta))$ depends on the prescribed choice of the accuracy $\zeta$ in the subroutine.

\section*{Acknowledgments} 
We would like to acknowledge Andre Wibisono for his helpful suggestions and for pointing us to additional references.

    \appendix

\section{Proofs of the main results}

\subsection{Preliminaries}\label{ssec:prelim}
We record a few standard lemmas which we use in our arguments.
\begin{lemma}[{Herbst's argument,~\citet[\S 2.3]{ledoux2006concentration}}]
    \label{lem:LSI-herbst-subgaussian}
    Suppose $\pi$ is $\alpha$-strongly log-concave and $X \sim \pi$. Then for $F$ that is $L$-Lipschitz, $F(X) - \bE_\pi[F(X)]$ is $L^2/\alpha$-sub-Gaussian.
\end{lemma}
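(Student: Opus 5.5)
This lemma is the classical Gaussian concentration result for strongly log-concave measures. I would prove it through the log-Sobolev inequality using Herbst's argument. By the Bakry--\'Emery criterion recalled after Definition~\ref{def:log-sobolev-inequality}, $\pi$ satisfies $\LSI(1/\alpha)$. By a standard approximation (mollify $F$ and truncate), it suffices to treat $F$ smooth and $L$-Lipschitz, so that $\|\nabla F\|\le L$ pointwise. We may also assume $\bE_\pi F = 0$. Define the Laplace transform $H(\lambda) \deq \bE_\pi[e^{\lambda F}]$ for $\lambda \in \bR$. It is finite because Lipschitz functions have at most linear growth while $\pi$ has Gaussian tails, being strongly log-concave.

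The core step applies the LSI to the test function $\phi = e^{\lambda F/2}$. This gives $\bE_\pi[\phi^2] = H(\lambda)$ and $\bE_\pi[\phi^2\log\phi^2] = \lambda H'(\lambda)$. For the gradient term, $\|\nabla\phi\|^2 = \frac{\lambda^2}{4}\,\|\nabla F\|^2 e^{\lambda F} \le \frac{\lambda^2 L^2}{4}\, e^{\lambda F}$. The inequality therefore becomes the differential inequality
\[
\lambda H'(\lambda) - H(\lambda)\log H(\lambda) \le \frac{\lambda^2 L^2}{2\alpha}\, H(\lambda)\,.
\]
Set $K(\lambda) \deq \frac{1}{\lambda}\log H(\lambda)$. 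Dividing by $\lambda^2 H(\lambda)$ shows this is exactly $K'(\lambda) \le \frac{L^2}{2\alpha}$. Moreover $K(0^+) = H'(0)/H(0) = \bE_\pi F = 0$. Integrating gives $\log H(\lambda) \le \frac{\lambda^2 L^2}{2\alpha}$ for $\lambda>0$. The case $\lambda<0$ follows by applying the same argument to $-F$, which is also $L$-Lipschitz. Hence $\bE_\pi e^{\lambda(F - \bE_\pi F)} \le \exp(\lambda^2 L^2/(2\alpha))$ for all $\lambda$, which is the claimed $L^2/\alpha$-sub-Gaussianity.

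The only delicate step is technical rather than conceptual. The LSI is stated for compactly supported smooth test functions, so it must be extended to $e^{\lambda F/2}$. I would first treat bounded Lipschitz $F$, approximating by cutoffs and using dominated convergence on both sides of the LSI. I would then pass to general Lipschitz $F$ through the truncations $F\wedge n\vee(-n)$. These are still $L$-Lipschitz, and the bound on $H$ passes to the limit by monotone convergence together with Fatou's lemma.

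A second subtlety is that in the composite setting $\pi$ may have an extended-valued potential, for example when $g=\iota_{\eu C}$. The Bakry--\'Emery LSI still holds in that case, by smoothing $g$ (e.g.\ via the Moreau--Yosida envelope) and using stability of the LSI under weak limits, so the argument above goes through unchanged. Alternatively, this whole step can simply be cited from \citet[\S 2.3]{ledoux2006concentration}.
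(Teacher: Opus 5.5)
Your proposal is correct and takes the same approach as the paper, which gives no proof of its own and simply cites Herbst's argument from \citet[\S 2.3]{ledoux2006concentration}. That argument is exactly what you carry out: the Bakry--\'Emery LSI applied to $e^{\lambda F/2}$, followed by integrating the differential inequality for $\lambda^{-1}\log \bE_\pi e^{\lambda F}$. Your approximation details and your remark on extended-valued potentials are sound additions; the latter matters because the paper applies this lemma to the proposal $\mu$, which contains a possibly non-smooth $g$.
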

\begin{lemma}[{Basic lemma,~\citet[Ch.~4]{chewi2026log}}]\label{lem:basic-lemma}
    Let $\pi \propto \exp(-V)$, with $V \succeq \alpha I_d \succ 0$.
    If $V$ is minimized at $x_\ast$, then $\bE_\pi[\|\cdot - x_\ast\|^2] \le d/\alpha$.
\end{lemma}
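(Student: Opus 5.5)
The statement to prove is the last one before the cut, Lemma~\ref{lem:basic-lemma}: if $\pi \propto e^{-V}$ with $V \succeq \alpha I_d$ and $V$ is minimized at $x_\ast$, then $\bE_\pi[\|\cdot - x_\ast\|^2] \le d/\alpha$. I would prove it by integration by parts, testing the score identity against the vector field $x \mapsto x - x_\ast$.

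First, assume $V$ is $C^2$. For a smooth vector field $\phi$ with suitable decay, integration by parts gives
\[
\bE_\pi[\operatorname{div}\phi] = \bE_\pi[\langle \nabla V, \phi\rangle].
\]
Take $\phi(x) = x - x_\ast$, so that $\operatorname{div}\phi = d$. This yields
\[
d = \bE_\pi[\langle \nabla V(X), X - x_\ast\rangle].
\]
Since $\nabla V(x_\ast) = 0$, strong convexity gives
\[
\langle \nabla V(x) - \nabla V(x_\ast), x - x_\ast\rangle \ge \alpha\,\|x - x_\ast\|^2 .
\]
Hence $d \ge \alpha\,\bE_\pi\|X - x_\ast\|^2$, which is the claim.

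Two points need justification.

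1. \emph{Boundary terms.} Strong convexity gives $V(x) \ge V(x_\ast) + \frac{\alpha}{2}\,\|x - x_\ast\|^2$, so $e^{-V}$ decays like a Gaussian. The boundary term $\int_{\partial B_R} e^{-V}\,\langle x - x_\ast, n\rangle$ therefore vanishes as $R \to \infty$. The same bound shows that the second moment is finite a priori.

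2. \emph{Nonsmooth $V$.} The paper allows $V = f + g$ with $g$ nonsmooth, possibly an indicator. In that case I would interpret $V \succeq \alpha I_d$ as $\alpha$-strong convexity and reduce to the smooth case by approximation. One option is to mollify, replacing $V$ by $V * \rho_\delta$, which stays $\alpha$-strongly convex. The other is to use the Moreau envelope with an added $\frac{\alpha}{2}\,\|\cdot\|^2$ correction. The approximating measures converge weakly to $\pi$, their minimizers converge to $x_\ast$, and the second moments are uniformly integrable thanks to uniform Gaussian tails. Fatou's lemma then passes the bound to the limit.

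The main obstacle is this approximation step for general convex $V$, in particular making the minimizers converge. If that proves delicate, I would instead use the alternative argument via the Langevin diffusion $\dd X_t = -\nabla V(X_t)\,\dd t + \sqrt{2}\,\dd B_t$, run at stationarity. By Itô's formula,
\[
\frac{\dd}{\dd t}\,\bE\|X_t - x_\ast\|^2 = -2\,\bE\langle \nabla V(X_t), X_t - x_\ast\rangle + 2d \le -2\alpha\,\bE\|X_t - x_\ast\|^2 + 2d .
\]
At stationarity the left-hand side is zero, which gives the same bound. This version faces the same smoothness issue, so the approximation remains the crux.
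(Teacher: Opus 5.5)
Your proof is correct and follows the standard argument behind the cited basic lemma; the paper gives no proof of its own, only the citation to Chewi's book. The argument is the identity $\bE_\pi\langle \nabla V(X), X - x_\ast\rangle = d$, obtained by integration by parts or equivalently from the stationary Langevin diffusion, combined with strong monotonicity of $\nabla V$ at $x_\ast$. For the nonsmooth case the paper relies on (the proposal $\mu$ with, e.g., $g = \iota_{\eu C}$), use the Moreau route rather than mollification, since mollification is not finite on all of $\bR^d$ when $V$ takes the value $+\infty$; center it at $x_\ast$, i.e.\ with $W \deq V - \frac{\alpha}{2}\,\|\cdot - x_\ast\|^2$, the point $x_\ast$ minimizes $W$, hence is exactly the minimizer of $W^\lambda + \frac{\alpha}{2}\,\|\cdot - x_\ast\|^2$, and since $W^\lambda \uparrow W$ with $W^\lambda \ge W(x_\ast)$, dominated convergence passes the bound to $\pi$, so the step you call the crux is routine.
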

\begin{lemma}[{Change of measure,~\citet[Ch.~6]{chewi2026log}}]\label{lem:change-of-measure-lemma}
    Suppose that a test function $\phi$ satisfies 
    \[
    \pi(\phi \ge \eta) \le \psi(\eta)\,, \quad \forall \eta > 0\,.
    \]
    Then, for any measure $\mu$ and $q > 1$, 
    \[
    \mu(\phi \ge \eta) \le \psi(\eta)^{\frac{q-1}{q}} \exp\Bigl( \frac{q-1}{q}\, \sfR_q(\mu \| \pi) \Bigr)\,.
    \]
\end{lemma}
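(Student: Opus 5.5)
This lemma is cited from~\citet[Ch.~6]{chewi2026log}, and the plan is to prove it directly by a single application of H\"older's inequality to the likelihood ratio $\dd\mu/\dd\pi$. We assume $\mu \ll \pi$; otherwise $\sfR_q(\mu\|\pi) = \infty$ and there is nothing to prove.

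Fix $\eta > 0$ and let $A \deq \{\phi \ge \eta\}$. First write the event probability as an integral against $\pi$:
\[
\mu(A) = \int \one_A \,\frac{\dd\mu}{\dd\pi}\, \dd\pi\,.
\]
Next apply H\"older's inequality with the conjugate exponents $q$ on the density ratio and $q' = q/(q-1)$ on the indicator:
\[
\mu(A) \le \Bigl(\int \Bigl(\frac{\dd\mu}{\dd\pi}\Bigr)^q \dd\pi\Bigr)^{1/q}\, \pi(A)^{(q-1)/q}\,.
\]
Finally, read off the first factor from Definition~\ref{def:renyi-divergence}:
\[
\Bigl(\int (\dd\mu/\dd\pi)^q\, \dd\pi\Bigr)^{1/q} = \exp\Bigl(\frac{q-1}{q}\, \sfR_q(\mu\|\pi)\Bigr)\,.
\]
Combining this with the hypothesis $\pi(A) \le \psi(\eta)$, and using that $t \mapsto t^{(q-1)/q}$ is monotone, gives the claim.

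There is no real obstacle here. The only points needing care are the choice of the exponent pairing, which places $q$ on the density ratio so that exactly the R\'enyi moment appears, and the degenerate absolutely-continuous case handled at the start.
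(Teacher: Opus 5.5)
Your proof is correct. Applying H\"older's inequality with exponent $q$ on $\dd\mu/\dd\pi$ and exponent $q/(q-1)$ on $\one_A$ is the standard argument for this lemma, which the paper does not prove itself but imports from the cited book. The same one-line argument also covers the case $q=\infty$ that the paper later uses: bounding $\dd\mu/\dd\pi$ by its essential supremum gives $\mu(A)\le \exp(\sfR_\infty(\mu\|\pi))\,\pi(A)$ directly.
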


\subsection{R\'enyi divergence lemma}

The log-Sobolev inequality bounds the KL divergence by the relative Fisher information, which is useful because the latter can be easier to bound. For instance, the relative Fisher information does not require computing normalizing constants.
The following lemma provides a R\'enyi divergence analogue of this bound, which is helpful for our subsequent arguments.
\begin{defin}[Relative Fisher information]
    For two probability measures $\mu \ll \nu$ on $\bR^d$, their relative Fisher information is defined as 
    \[
    \FI (\mu \| \nu) = \bE_\mu \Bigl[ \Bigl\| \nabla \log \frac{\mu}{\nu} \Bigr\|^2 \Bigr] \,.
    \]
\end{defin}
If $\nu$ satisfies $\LSI(1/\alpha)$, by definition of log-Sobolev inequality \ref{def:log-sobolev-inequality}, let $\phi = \sqrt{\mu/\nu}$, we have 
\[
\KL(\mu \| \nu) \le \frac{1}{2\alpha} \FI(\mu \| \nu)\,.
\]
\begin{lemma}[Upper bound for R\'enyi divergence]
    \label{lem:renyi-upper-bound}
    Let $\pi$ satisfy an LSI with constant $1/\alpha$. Then for any $q > 1$ and any $\lambda > q^2/(2\alpha)$, 
    \[
    \sfR_q(\mu \| \pi) \leq \frac{q^2/(2\alpha)}{\lambda - q^2/(2\alpha)} \log \bE_\pi \exp \Bigl( \lambda\, \Bigl\| \nabla \log \frac{\mu}{\pi}\Bigr\|^2 \Bigr)\,.
    \]
    Also, if $\lambda > q(q-1)/(2\alpha)$, 
    \[
    \sfR_q (\mu \| \pi) \leq \frac{q^2/(2\alpha)}{\lambda - q(q-1)/(2\alpha)} \log \bE_\mu \exp \Bigl( \lambda\, \Bigl\| \nabla \log \frac{\mu}{\pi}\Bigr\|^2 \Bigr)\,.
    \]
\end{lemma}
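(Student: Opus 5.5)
The plan is the Vempala--Wibisono route: apply the LSI to the test function $\phi = (\mu/\pi)^{q/2}$ to get a differential inequality in $q$ for $G(q) \deq \log \bE_\pi[(\mu/\pi)^q] = (q-1)\,\sfR_q(\mu\|\pi)$. The Fisher-type term it produces is controlled by a Donsker--Varadhan (Gibbs variational) estimate, and the resulting inequality is integrated from $q=1$, where $G(1)=0$.

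\textbf{Step 1 (LSI as an ODE inequality).} Let $\rho_q \propto \pi\,(\mu/\pi)^q$ and $Z \deq \|\nabla\log(\mu/\pi)\|^2$. Since $\nabla\phi = \frac q2\,\phi\,\nabla\log(\mu/\pi)$, the LSI of Definition~\ref{def:log-sobolev-inequality} gives
\[
\frac{\ent_\pi(\phi^2)}{\bE_\pi\phi^2} \le \frac{2}{\alpha}\cdot\frac{q^2}{4}\,\bE_{\rho_q} Z\,.
\]
The left-hand side equals $\KL(\rho_q\|\pi) = qG'(q) - G(q) = q^2\,(G/q)'(q)$, because $G'(q) = \bE_{\rho_q}\log(\mu/\pi)$. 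Write $D(q) \deq q^2\,(G/q)'(q)$. Then
\[
D \le \frac{q^2}{2\alpha}\,\bE_{\rho_q}Z\,.
\]
I will only use this for the smooth and positive case; general $\mu$ follows by approximation.

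\textbf{Step 2 (first bound: change of measure to $\pi$).} By Donsker--Varadhan,
\[
\lambda\,\bE_{\rho_q}Z \le \KL(\rho_q\|\pi) + \Lambda_\pi\,, \qquad \Lambda_\pi \deq \log\bE_\pi e^{\lambda Z}\,.
\]
Since $\KL(\rho_q\|\pi) = D$, Step 1 gives $D \le \frac{q^2}{2\alpha\lambda}(D + \Lambda_\pi)$. Rearranging,
\[
(G/q)'(s) \le \frac{\Lambda_\pi}{2\alpha\lambda - s^2}\,.
\]
Integrate over $s\in[1,q]$, using $G(1)=0$ and that the right side is increasing in $s$. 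This gives $G(q)/q \le (q-1)\Lambda_\pi/(2\alpha\lambda - q^2)$, so
\[
\sfR_q = \frac{G}{q-1} \le \frac{q/(2\alpha)}{\lambda - q^2/(2\alpha)}\,\Lambda_\pi\,.
\]
This is at most the claimed bound because $q \le q^2$ and $\Lambda_\pi \ge 0$ by Jensen.

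\textbf{Step 3 (second bound: change of measure to $\mu$).} Now apply Donsker--Varadhan relative to $\mu$:
\[
\lambda\,\bE_{\rho_q}Z \le \KL(\rho_q\|\mu) + \Lambda_\mu\,, \qquad \Lambda_\mu \deq \log\bE_\mu e^{\lambda Z}\,.
\]
Since $\rho_q/\mu = (\mu/\pi)^{q-1}/e^{G}$, we have
\[
\KL(\rho_q\|\mu) = (q-1)G' - G = \tfrac{q-1}{q}\,D - \tfrac{G}{q} \le \tfrac{q-1}{q}\,D\,,
\]
using $G\ge0$ for $q\ge1$, which holds because Rényi divergences are nonnegative. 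Plugging into Step 1 gives $\bigl(\lambda - \tfrac{q(q-1)}{2\alpha}\bigr) D \le \tfrac{q^2}{2\alpha}\Lambda_\mu$. The same integration as in Step 2, with $s(s-1)$ in place of $s^2$ (still increasing), yields
\[
\sfR_q \le \frac{q/(2\alpha)}{\lambda - q(q-1)/(2\alpha)}\,\Lambda_\mu\,,
\]
which again is dominated by the stated bound.

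\textbf{Main obstacle.} I expect the delicate point to be the bookkeeping in Step 3: identifying $\KL(\rho_q\|\mu)$ exactly in terms of $G$ and $G'$, and checking that discarding $-G/q$ has the right sign. A secondary point is justifying differentiability of $G$ in $q$ and the integration for all intermediate $s\in[1,q]$. The hypothesis on $\lambda$ at the endpoint $q$ covers every $s\le q$ by monotonicity, and if the right-hand side is infinite there is nothing to prove.
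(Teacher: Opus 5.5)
Your proof is correct, and it uses the same ingredients as the paper. These are the LSI applied to $(\mu/\pi)^{q/2}$, Donsker--Varadhan against $\pi$ (resp.\ $\mu$), and the self-bounding step that reapplies the LSI to the tilted measure $\rho_q$. For the second bound you also use the comparison $\KL(\rho_q\|\mu)\le\frac{q-1}{q}\KL(\rho_q\|\pi)$. You derive it from the exact identity with the $-G/q$ term, whereas the paper simply asserts it.

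The difference lies in how $\sfR_q$ is recovered at the end. The paper works only at the order $q$. It quotes Lemma~5 of Vempala--Wibisono, i.e.\ $\sfR_q(\mu\|\pi)\le\KL(\rho_q\|\pi)\le\frac{q^2}{2\alpha}\,\bE_{\rho_q}Z$, which rests on monotonicity of $q\mapsto\sfR_q$, i.e.\ on convexity of $G$. It then bounds $\bE_{\rho_q}Z$ by $\Lambda_\pi/(\lambda-q^2/(2\alpha))$, resp.\ $\Lambda_\mu/(\lambda-q(q-1)/(2\alpha))$. You instead bound $\KL(\rho_s\|\pi)$ at every intermediate order and integrate the exact identity $\sfR_q=\frac{q}{q-1}\int_1^q \KL(\rho_s\|\pi)\,s^{-2}\,\dd s$. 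This gains a factor of $q$, which you then discard to match the statement.

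The paper's route is shorter and needs information at a single order only. Yours needs $G$ finite and differentiable along $[1,q]$, the same kind of regularity already implicit in the appeal to Vempala--Wibisono. In return it gives the sharper bound, with $q/(2\alpha)$ in place of $q^2/(2\alpha)$. For $\pi=\cN(0,I_d)$, $\mu=\cN(m,I_d)$ (so $\alpha=1$), your bound tends to the exact value $q\|m\|^2/2=\sfR_q$ as $\lambda\to\infty$, whereas the stated bound tends to $q^2\|m\|^2/2$. In the paper's application ($\lambda=2p^2h$, LSI constant $2h$), your form replaces the prefactor $\frac{p}{p+1}$ by $\frac{1}{p+1}$. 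The subsequent R\'enyi bounds would then grow like $p$ rather than $p^2$, so one could take $\gamma=1$ in Theorem~\ref{thm:indep_mh}.
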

\begin{proof}
    As in Lemma 5 of \citep{vempala2019rapid}, by the log-Sobolev inequality, we have
    \[
    \sfR_q(\mu \| \pi) \le \frac{q^2}{2\alpha} \cdot \frac{\bE_\pi [ (\frac{\mu}{\pi})^q\, \| \nabla \log \frac{\mu}{\pi} \|^2]}{\bE_\pi[(\frac{\mu}{\pi})^q]}\,.
    \]
    Let $\gamma = (\frac{\mu}{\pi})^q \pi / \bE_\pi[(\frac{\mu}{\pi})^q]$. The above reads 
    \begin{equation}
        \label{eq:relative-Fisher-Renyi-LSI}
        \sfR_q (\mu \| \pi) \le \frac{q^2}{2\alpha}\, \bE_\gamma \Bigl[ \Bigl\| \nabla \log \frac{\mu}{\pi} \Bigr\|^2 \Bigr]\,.
    \end{equation}
    By Donsker--Varadhan duality, for any $\lambda > 0$, we have 
    \begin{equation}
        \label{eq:donsker-varadhan-variational}
        \bE_\gamma\Bigl[ \Bigl\| \nabla \log \frac{\mu}{\pi} \Bigr\|^2 \Bigr] \le \frac{1}{\lambda}\, \Bigl\{ \KL(\gamma \| \pi) + \log \bE_\pi \exp\Bigl( \lambda\, \Bigl\| \nabla \log \frac{\mu}{\pi} \Bigr\|^2 \Bigr) \Bigr\}\,.
    \end{equation}
    By applying the log-Sobolev inequality again, we have 
    \[
    \KL(\gamma \| \pi) \le \frac{1}{2\alpha}\, \FI(\gamma \| \pi) = \frac{q^2}{2\alpha}\, \bE_\gamma\Bigl[ \Bigl\| \nabla \log \frac{\mu}{\pi} \Bigr\|^2 \Bigr]\,.
    \]
    Substituting into \eqref{eq:donsker-varadhan-variational} and rearranging terms, we have
    \[
    \bE_\gamma \Bigl[ \Bigl\| \nabla \log \frac{\mu}{\pi} \Bigr\|^2 \Bigr] \le (\lambda - q^2/(2\alpha))^{-1} \log \bE_\pi  \exp\Bigl( \lambda\, \Bigl\| \nabla \log \frac{\mu}{\pi} \Bigr\|^2 \Bigr)\,.
    \]
    Thus, \eqref{eq:relative-Fisher-Renyi-LSI} becomes
    \[
    \sfR_q (\mu \| \pi) \le \frac{q^2/(2\alpha)}{\lambda - q^2/(2\alpha)} \log \bE_\pi \exp\Bigl( \lambda\, \Bigl\| \nabla \log \frac{\mu}{\pi} \Bigr\|^2 \Bigr)\,.
    \]
    The second statement follows similarly, except that we have
    \begin{align*}
    \KL(\gamma \| \mu)
    &\le \frac{q-1}{q}\, \KL(\gamma \| \pi)\,.\qedhere
    \end{align*}
\end{proof}

\subsection{RGO implementation}
The key step in our analysis of Algorithm \ref{alg:outer-loop-Gibbs-Sampling} is to implement the conditional probability $\pi^{X \mid Y}$, the RGO of $f+g$. Suppose we are given $Y=y \in \bR^d$ and define
\begin{equation}
    \label{eq:subroutine-distribution}
    \nu(x)  \deq  \pi^{X \mid Y = y}(x) \propto \exp\Bigl( -f(x) - g(x) - \frac{1}{2h}\,\|x-y\|^2 \Bigr)\,,
\end{equation}
and our proposal 
\begin{equation}
    \label{eq:subroutine-proposal}
    \mu(x) \propto \exp \Bigl( - \langle \nabla f(y), x-y \rangle - g(x)- \frac{1}{2h}\, \|x-y\|^2 \Bigr)\,
\end{equation}
with mode 
\[
y^+ = \arg\min_{x \in \bR^d} \Bigl\{g(x)+\frac{1}{2h}\,\|x-(y-h\nabla f(y))\|^2\Bigr\} = \text{prox}_{hg}(y-h\nabla f(y))\,.
\]
The following key lemma controls the R\'enyi divergence between $\mu$ and $\nu$, which will later allow us to invoke Theorem~\ref{thm:indep_mh}.

\begin{lemma}
    Let $\mu$ and $\nu$ be those defined above in Eq.~\eqref{eq:subroutine-distribution} and Eq.~\eqref{eq:subroutine-proposal}. 
    Let $g$ be convex. 
    \begin{enumerate}
        \item Let $p > 1$. Assume that $f$ is $\alpha_f$-convex and $\beta$-smooth, and that $h \lesssim 1/(\beta p)$ for a sufficiently small implied constant. Then, 
        \begin{align}
        \sfR_p (\mu \| \nu) \vee \sfR_p(\nu\|\mu) \lesssim \beta^2 p^2 h\, \norm{y - y^+}^2 + \beta^2 p^2 d h^2 \,. \label{eq:renyi-divergence-bound-A3}
        \end{align}
        \item Let $p > 1$. Assume that $f$ is $L$-Lipschitz and continuously differentiable, and that $h\lesssim 1/L^2$. Then,
        \begin{align}\label{eq:renyi_bd_Lip}
        \sfR_p(\mu \| \nu) \vee \sfR_p(\nu\|\mu) \lesssim p^2 h L^2 \,.
        \end{align}
    \end{enumerate}
\end{lemma}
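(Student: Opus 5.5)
Both $\mu$ and $\nu$ are log-concave with strong convexity at least $1/h$ up to an $O(\beta)$ correction: $-\log\nu = f+g+\frac{1}{2h}\|\cdot-y\|^2$ is $(1/h+\alpha_f)$-convex with $\alpha_f\ge -\beta$, and $-\log\mu$ is $1/h$-convex. Since $h\lesssim 1/\beta$, both measures satisfy $\LSI(2h)$ by Bakry--\'Emery. The plan is therefore to apply Lemma~\ref{lem:renyi-upper-bound} with $\alpha \asymp 1/h$ in both directions.

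The key observation is that the score difference does not involve $g$:
\[
\nabla\log\frac{\mu}{\nu}(x) = \nabla f(x)-\nabla f(y),
\]
so the normalizing constants and the non-smooth part cancel. In case~1 this gives $\|\nabla \log(\mu/\nu)\|^2\le \beta^2\|x-y\|^2$, and in case~2 it gives $\le 4L^2$. Case~2 is then immediate: take $\lambda = q^2/\alpha = q^2 h$ in either statement of Lemma~\ref{lem:renyi-upper-bound}. The prefactor is $\frac{q^2/(2\alpha)}{\lambda - q^2/(2\alpha)} = 1$ and the log-moment is at most $4\lambda L^2$, so $\sfR_p\lesssim p^2hL^2$ in both directions. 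The assumption $h\lesssim 1/L^2$ is not even needed for this step, only for the LSI constant if $f$ is nonconvex; for Lipschitz $f$ I would use $\alpha\asymp 1/h$ directly, or a Holley--Stroock perturbation argument if necessary.

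For case~1, again take $\lambda\asymp p^2 h$, which makes the prefactor $O(1)$. It then suffices to show
\[
\log \bE_\rho \exp\bigl(c\,p^2h\beta^2\|X-y\|^2\bigr) \lesssim \beta^2p^2h\|y-y^+\|^2 + \beta^2p^2dh^2
\]
for $\rho\in\{\mu,\nu\}$; the first statement of the lemma uses the expectation under the reference measure, the second under the other measure. Write $\|X-y\|^2\le 2\|X-m_\rho\|^2+2\|m_\rho-y\|^2$, where $m_\rho$ is the mode of $\rho$. For $\mu$ the mode is exactly $y^+$. By Herbst (Lemma~\ref{lem:LSI-herbst-subgaussian}) applied to the $1$-Lipschitz function $\|\cdot-m_\rho\|$, together with the basic lemma (Lemma~\ref{lem:basic-lemma}) giving $\bE\|X-m_\rho\|^2\le dh'$ with $h'\asymp h$, the random variable $\|X-m_\rho\|^2$ has a sub-exponential MGF at scale $h$. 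Concretely, $\log\bE\exp(s\|X-m_\rho\|^2)\lesssim s\,dh$ whenever $s\lesssim 1/h$. Here $s = c\,p^2h\beta^2$, and $s\,h = c\,p^2h^2\beta^2\lesssim 1$ precisely because $h\lesssim 1/(\beta p)$. This produces the $\beta^2p^2dh^2$ term, and the deterministic part produces $\beta^2p^2h\|m_\rho-y\|^2$.

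The main obstacle is $\rho=\nu$, whose mode $x_\nu=\prox_{h(f+g)}(y)$ differs from $y^+$. I would compare the two by the standard prox-gradient argument. The first-order conditions are
\[
0\in \nabla f(x_\nu)+\partial g(x_\nu)+(x_\nu-y)/h, \qquad 0\in \nabla f(y)+\partial g(y^+)+(y^+-y)/h.
\]
Subtracting and using monotonicity of $\partial g$ gives
\[
\|x_\nu-y^+\|^2/h \le \langle \nabla f(y)-\nabla f(x_\nu),\, x_\nu-y^+\rangle \le \beta\|y-x_\nu\|\,\|x_\nu-y^+\|.
\]
Hence $\|x_\nu-y^+\|\le \beta h\|y-x_\nu\|$, and since $\beta h\le 1/2$ this yields $\|x_\nu-y\|\le 2\|y^+-y\|$. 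So $\|m_\nu-y\|\lesssim\|y-y^+\|$ and both directions give the bound~\eqref{eq:renyi-divergence-bound-A3}. One remaining caveat is that when $g$ is non-smooth, Herbst and the LSI for $\nu$ and $\mu$ rely on the strong convexity of the full potential, including $g$; this holds by convexity of $g$, possibly after a smoothing and limiting argument.
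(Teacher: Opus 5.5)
Your proof is correct and is essentially the paper's argument. The shared ingredients are Lemma~\ref{lem:renyi-upper-bound} with $\lambda\asymp p^2h$, the $g$-free score difference $\nabla f(x)-\nabla f(y)$, smoothness, a split around the mode, and Herbst plus the basic lemma. The one difference in case~1 is that you also control exponential moments under $\nu$, which is why you need the mode comparison. This is avoidable, and it is what the paper does:
\begin{itemize}
\item for $\sfR_p(\mu\|\nu)$, use the second inequality of Lemma~\ref{lem:renyi-upper-bound} (reference $\nu$, expectation under $\mu$);
\item for $\sfR_p(\nu\|\mu)$, use the first inequality with $\mu$ as the reference measure (expectation again under $\mu$, and the LSI comes from the $\frac1h$-strong log-concavity of $\mu$).
\end{itemize}
Since the mode of $\mu$ is exactly $y^+$, no comparison of modes is ever needed. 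Your monotonicity argument, giving $\|x_\nu-y^+\|\le\beta h\,\|y-x_\nu\|$ and hence $\|x_\nu-y\|\le 2\,\|y-y^+\|$, is correct. It simply makes either version of the lemma usable.

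The step that is not justified as written is the LSI for $\nu$ in case~2, which you rightly identify as the only place where $h\lesssim 1/L^2$ enters. There $f$ is not assumed convex, so $\nu$ need not be log-concave and Bakry--\'Emery does not give $\alpha\asymp 1/h$ ``directly.'' Plain Holley--Stroock does not apply either, since $f$ is an unbounded perturbation. The paper invokes the Brigati--Pedrotti result on Lipschitz perturbations of strongly log-concave measures.

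A Holley--Stroock route does work after one extra step. The sup-convolution $f^{t}(x)\deq\sup_z\{f(z)-\frac{1}{2t}\,\|x-z\|^2\}$ with $t=2h$ is $(-\frac{1}{2h})$-semiconvex and satisfies $0\le f^{t}-f\le hL^2$. So $\nu$ is a bounded perturbation, with oscillation at most $hL^2\lesssim 1$, of a $\frac{1}{2h}$-strongly log-concave measure, and therefore satisfies an LSI with constant $O(h)$.

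You could also bypass the LSI for $\nu$ entirely. Write $\nu\propto e^{-\psi}\mu$ with $\psi\deq f-\langle\nabla f(y),\cdot-y\rangle$, which is $2L$-Lipschitz. Both divergences are then explicit combinations of $\log\bE_\mu e^{\lambda\psi}$. Herbst under $\mu$ gives $\log\bE_\mu e^{\lambda\psi}\le\lambda\,\bE_\mu\psi+2\lambda^2L^2h$, and together with Jensen this yields $\sfR_p\lesssim p^2L^2h$ with no restriction on $h$. For $\sfR_p(\nu\|\mu)$ with $p<2$, you also need monotonicity of $\sfR_p$ in $p$.
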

\begin{proof}
    In the first case, note that $\nu$ is $(1/h-\beta)$-strongly log-concave, which is at least $1/(2h)$ when $h \le 1/\beta$.
    By Lemma~\ref{lem:renyi-upper-bound} with $\lambda = 2p^2 h$, 
    \begin{align}
        \label{eq:bounding-renyi-divergence}
        \sfR_p (\mu \| \nu) &\le \frac{p^2h}{2p^2h - p(p-1)h} \log \bE_\mu \exp \Bigl( 2p^2 h\, \Bigl\| \nabla \log \frac{\mu}{\nu} \Bigr\|^2 \Bigr) \notag \\
        &\le \log \bE_{x \sim \mu} \exp \bigl( 2p^2 h\, \| \nabla f(x)- \nabla f(y)\|^2 \bigr) \notag \\
        &\le \log \bE_{x \sim \mu} \exp \bigl( 2\beta^2 p^2 h\, \| x-y \|^2 \bigr) \qquad (\text{$f$ is $\beta$-smooth}) \notag \\
        &\le \log \bE_{x \sim \mu} \exp\bigl( 4 \beta^2 p^2 h\, (\|x-y^+\|^2 + \|y - y^+\|^2) \bigr) \notag \\
        &= 4 \beta^2 p^2 h \,\|y-y^+\|^2 + \log \bE_{x \sim \mu} \exp\bigl(4 \beta^2 p^2 h\, \|x-y^+\|^2 \bigr)\,,
    \end{align}
    where $y^+$ is the mode of $\mu$. Since $\mu$ is $1/h$-strongly log-concave and $\|\cdot - y^+\|$ is $1$-Lipschitz, by Herbst's argument (Lemma \ref{lem:LSI-herbst-subgaussian}), 
    \[
    Z = \|x-y^+\| - \bE_{x \sim \mu}\left[\|x-y^+\|\right] \quad \text{is $h$-sub-Gaussian\,.}
    \]
    With this, we can further bound the latter term in Eq.~\eqref{eq:bounding-renyi-divergence}:
    \[
    \|x-y^+\|^2 = \left( Z + \bE_{x\sim \mu}\left[ \|x-y^+\| \right] \right)^2 \notag \le 2 Z^2 + 2 \,\bE_{x\sim \mu}\left[ \|x-y^+\| \right]^2 \,,
    \]
    which implies 
    \[
    \log \bE_{x \sim \mu} \exp \bigl(4 \beta^2 p^2 h\, \|x-y^+\|^2\bigr) \le 8\beta^2 p^2 h\, \bE_{x \sim \mu}\left[ \|x-y^+\|^2 \right] + \log \bE_\mu \exp\bigl( 8\beta^2 p^2 h Z^2 \bigr) \,.
    \]
    For the first term, we apply Lemma~\ref{lem:basic-lemma} and obtain
    \[
    \bE_{x \sim \mu}\left[ \|x-y^+\|^2 \right] \le d h\,.
    \]
    For the second term, we use the equivalent definition of sub-Gaussian random variables~\citep[Proposition 2.5.2]{vershynin2018high}. Specifically, $h$-sub-Gaussianity of $Z$ implies 
    $\bE \exp( \tilde\lambda^2 Z^2 ) \le \exp(C^2 \tilde\lambda^2)$ with $|\tilde\lambda| \le \frac{1}{C}$ and $C \asymp \sqrt{h}$. Hence, 
    \[
    \log \bE_\mu \exp\bigl(8\beta^2 p^2 h Z^2 \bigr) \lesssim \beta^2 p^2 h^2\, ,
    \]
    where we used $h \lesssim 1/(\beta p)$ with a sufficiently small constant. Thus, Eq.~\eqref{eq:bounding-renyi-divergence} becomes: 
    \begin{equation*}
        \sfR_p(\mu \| \nu) \lesssim \beta^2 p^2 h\, \|y-y^+\|^2 + \beta^2 p^2 d h^2 + \beta^2 p^2 h^2 \lesssim \beta^2 p^2 h\, \|y-y^+\|^2 + \beta^2 p^2 d h^2\,.
    \end{equation*}
    The proof for the bound on $\sfR_p(\nu\|\mu)$ is the same as above: interchange the roles of $\mu$ and $\nu$, and use the first inequality of Lemma~\ref{lem:renyi-upper-bound} instead.
    
    For the second statement, since $f$ is differentiable and $L$-Lipschitz, $\norm{\nabla f} \le L$. Also, the negative log-density of $\nu$ is an $L$-Lipschitz perturbation of a $1/h$-strongly convex function, so $\nu$ satisfies a log-Sobolev inequality with constant $O(1/h)$ by~\citet{BriPed25HeatFlow}, provided that $h \lesssim 1/L^2$. We apply Lemma~\ref{lem:renyi-upper-bound} with $\lambda = Cp^2 h$ for a universal constant $C > 0$:
    \[
    \sfR_p(\mu \| \nu) \le \log \bE_{x \sim \mu} \exp\bigl(Cp^2 h\, \norm{\nabla f(x) - \nabla f(y)}^2 \bigr) \lesssim p^2 L^2 h \,.
    \]
    The reverse bound also follows easily.
\end{proof}

The first case of the lemma above suggests that we need a bound for $\|y-y^+\|^2$ along the iterates of Algorithm~\ref{alg:outer-loop-Gibbs-Sampling}, which is the main focus of \S\ref{subsection:concentration-along-perfect-RGO}.

\subsection{Concentration along the proximal sampler}
\label{subsection:concentration-along-perfect-RGO}
In this subsection, we bound $\norm{y-y^+}^2$ along Algorithm \ref{alg:outer-loop-Gibbs-Sampling}, where at each step $k$, the conditional distribution $\pi^{X_{k+1} \mid Y_k}$ is exact.
In this subsection, we assume that $f$ is $\alpha_f$-convex and $\beta$-smooth, and that $g$ is $\alpha_g$-convex, where: $\alpha \deq \alpha_f + \alpha_g > 0$, $\alpha_f, \alpha_g \ge -\beta$.

Suppose $Y_k = y$ and recall that $y^+ = \text{prox}_{hg}(y - h \nabla f(y))$ is the mode of our proposal $\mu$. By first-step optimality,
\[
-\nabla f(y) -\frac{1}{h}\, (y^+ -y) \in \partial g(y^+)\,.
\]
By the definition of subgradient, $\forall p \in \partial g(y^+)$, we have 
\[
g(z) \ge g(y^+) + \langle p, z - y^+ \rangle\,, \qquad \forall z \in \bR^d\,. 
\]
We evaluate this expression at $z = y$ and $p = -\nabla f(y) - \frac{1}{h}\, (y^+ -y)$: 
\begin{align*}
    g(y) &\ge g(y^+) + \Bigl\langle -\nabla f(y) - \frac{1}{h}\,(y^+-y), \,y - y^+ \Bigr\rangle \\
    &= g(y^+) + \frac{1}{h}\, \|y-y^+\|^2 - \langle \nabla f(y), y-y^+ \rangle \\
    \Longrightarrow g(y^+) &\le g(y) + \langle \nabla f(y), y - y^+ \rangle - \frac{1}{h}\, \|y-y^+\|^2\,. 
\end{align*}
On the other hand, by $\beta$-smoothness of $f$, 
\[
f(y^+) \le f(y) + \langle \nabla f(y), y^+ - y \rangle + \frac{\beta}{2}\, \|y - y^+ \|^2\,.
\]
Add up the two inequalities and define $F  \deq  f + g$. We obtain
\[
F(y^+) \le F(y) - \bigl(\frac{1}{h}-\frac{\beta}{2}\bigr)\, \|y - y^+ \|^2\,.
\]
Since $h \le 1/\beta$, we have
\begin{equation}
    \label{eq:y-y+bounded-by-F}
    \|y - y^+ \|^2 \le 2h\,(F(y) - F(y^+) ) \le 2h\, (F(y) - F_\ast)\,,
\end{equation}
where $F_\ast \deq \inf F$.

Note that the argument above holds for any $y$. We first prove the concentration of $F-F_\ast$ under the stationary distribution $\pi \propto e^{-f-g}$. Then, we use Lemma~\ref{lem:change-of-measure-lemma} to establish concentration at the initial distribution for the $y$ variable. Finally, we apply the data-processing inequality to show that at each iteration $k$ in Algorithm~\ref{alg:outer-loop-Gibbs-Sampling}, we still have a suitable concentration bound. 

\begin{lemma}
    \label{lem:concentration-F-gap}
    Let $F$ be convex, $F_* \deq \inf F$, and $\pi\propto \exp(-F)$.
    \begin{enumerate}
        \item For all $\lambda \in (0,1)$,
        \begin{align*}
            \bE_\pi \exp(\lambda\,(F-F_*)) \le (1-\lambda)^{-d}\,.
        \end{align*}
        \item For all $\delta \in (0,1)$,
        \begin{align*}
            \pi\bigl(F-F_* \ge d + \sqrt{2d\log(1/\delta)} + \log(1/\delta)\bigr) \le \delta\,.
        \end{align*}
    \end{enumerate}
\end{lemma}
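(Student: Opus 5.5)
The plan is to prove part 1 by a direct computation with normalizing constants, exploiting convexity through a scaling argument, and then to derive part 2 from part 1 by a Chernoff bound. Without loss of generality I will take $F_\ast = 0$ (replace $F$ by $F - F_\ast$), and since $F$ is convex with $\int e^{-F} < \infty$, the infimum is attained at some $x_\ast$; translate so that $x_\ast = 0$. Write $Z(t) \deq \int e^{-tF}$ for $t>0$.

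\textbf{Part 1.} We have $\bE_\pi \exp(\lambda F) = Z(1-\lambda)/Z(1)$, so the claim is $Z(s) \le s^{-d} Z(1)$ for $s = 1-\lambda\in(0,1)$. The key step is the following consequence of convexity together with $F(0)=0$: for $s\in(0,1)$ and any $x$,
\[
F(sx) \le s\,F(x) + (1-s)\,F(0) = s\,F(x)\,.
\]
Changing variables $x = s u$ gives
\[
Z(s) = \int e^{-sF(x)}\,\dd x = s^{d}\int e^{-sF(su)}\,\dd u\,.
\]
This has the wrong direction, so I will instead substitute $x = u/s$:
\[
Z(1) = \int e^{-F(x)}\,\dd x = s^{-d}\int e^{-F(u/s)}\,\dd u \le s^{-d}\int e^{-sF(u)}\cdots
\]
This also needs care. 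Applying the convexity inequality to the point $u/s$ gives $F(u) = F(s\cdot u/s) \le sF(u/s)$, that is, $F(u/s)\ge F(u)/s$. That bound goes the wrong way for $Z(1)$ as well. The clean route is therefore the change $x = u/s$ in $Z(s)$:
\[
Z(s) = \int e^{-sF(x)}\,\dd x = s^{-d}\int e^{-sF(u/s)}\,\dd u \le s^{-d}\int e^{-F(u)}\,\dd u = s^{-d}Z(1)\,,
\]
where the inequality uses $sF(u/s)\ge F(u)$. This gives $\bE_\pi e^{\lambda(F-F_\ast)} \le (1-\lambda)^{-d}$, as required.

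\textbf{Part 2.} By Markov's inequality, for $t>0$ and $\lambda\in(0,1)$,
\[
\pi(F - F_\ast \ge t) \le \exp\bigl(-\lambda t - d\log(1-\lambda)\bigr)\,.
\]
Set $t = d + \sqrt{2d\log(1/\delta)} + \log(1/\delta)$. It remains to show that $\inf_\lambda\{-\lambda t - d\log(1-\lambda)\} \le -\log(1/\delta)$; this is the Gamma$(d,1)$ tail computation. The optimizer is $\lambda = 1 - d/t$, which yields the exponent $-(t-d) + d\log(t/d)$. Writing $t = d + a$, I need
\[
a - d\log(1+a/d) \ge \log(1/\delta)\,.
\]
For this I will use the elementary inequality $u - \log(1+u) \ge \dfrac{u^2}{2(1+u)}$ for $u \ge 0$, which gives $a - d\log(1+a/d) \ge \dfrac{a^2}{2(d+a)}$. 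With $L \deq \log(1/\delta)$ and $a = \sqrt{2dL} + L$, we have $a^2 = 2dL + 2L\sqrt{2dL} + L^2 \ge 2L(d+a)$, since $2L(d+a) = 2dL + 2L\sqrt{2dL} + 2L^2$. This bound falls short by $L^2$, so I will tighten the inequality to $u - \log(1+u) \ge \dfrac{u^2}{2(1+u/3)}\cdot$ (Bernstein-type), or equivalently $u-\log(1+u)\ge \dfrac{u^2}{2+2u}$ adjusted. The standard Laurent–Massart-type statement follows from the sub-gamma bound $-\log(1-\lambda)-\lambda \le \dfrac{\lambda^2}{2(1-\lambda)}$, which gives the tail $\sqrt{2dL} + L$ directly. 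I would use that route, choosing $\lambda$ as in the Laurent–Massart proof.

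\textbf{Main obstacle.} I expect the main obstacle to be this last numerical step. Part 1 is a short scaling argument once the change of variables is taken in the right direction.
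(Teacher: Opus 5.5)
Your argument is correct and is essentially the paper's: Part~1 is the same convexity-plus-dilation computation (the paper first bounds $(1-\lambda)(F(x)-F_\ast)\ge F((1-\lambda)x+\lambda x_\ast)-F_\ast$ and then changes variables, while you change variables first and then use $sF(u/s)\ge F(u)$). Part~2, in the form you finally commit to, is exactly the paper's route: the sub-gamma bound $-\log(1-\lambda)-\lambda\le \lambda^2/(2(1-\lambda))$ followed by the standard sub-gamma tail, which you can make explicit by taking $\lambda=\sqrt{2L/d}/(1+\sqrt{2L/d})$, so that the Chernoff exponent equals exactly $-L$.

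One correction: delete the abandoned ``Bernstein-type'' aside. The inequality $u-\log(1+u)\ge u^2/(2(1+u/3))$ is false for every small $u>0$, since the left side is $u^2/2-u^3/3+\cdots$ and the right side is $u^2/2-u^3/6+\cdots$. If you want to keep the direct Cram\'er computation, the inequality that closes it is $u-\log(1+u)\ge 1+u-\sqrt{1+2u}$, i.e.\ $\sqrt{1+2u}\ge 1+\log(1+u)$, which holds since $1/\sqrt{1+2u}\ge 1/(1+u)$.
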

\begin{proof}
For the first statement,
\begin{align*}
    \bE_\pi[\exp(\lambda \,(F - F_\ast))] &= \frac{\int \exp\left( \lambda\,(F(x) - F_\ast) -F(x) \right) \dd x}{\int \exp(-F(x))\, \dd x} \\
    &= \frac{\int \exp(-(1-\lambda)\, (F(x) - F_\ast) )\, \dd x}{\int \exp(-(F(x) - F_\ast) )\, \dd x}\,.
\end{align*}
By convexity of $F$, 
\[
(1-\lambda) F(x) + \lambda F_\ast \ge F((1-\lambda) x + \lambda x_\ast)\,,
\]
which implies 
\[
(1-\lambda)\,(F(x) - F_\ast) \ge F((1-\lambda) x + \lambda x_\ast) - F_\ast\,.
\]
Thus, the ratio becomes
\begin{align*}
    \frac{\int \exp(-(1-\lambda)\, (F(x) - F_\ast) )\, \dd x}{\int \exp(-(F(x) - F_\ast) )\, \dd x} &\le \frac{\int \exp(-F((1-\lambda) x + \lambda x_\ast) + F_\ast )\, \dd x}{\int \exp(-(F(x) - F_\ast) )\, \dd x}  \\
    &= \frac{\int \exp(-F((1-\lambda) x + \lambda x_\ast) )\, \dd x}{\int \exp(-F(x))\, \dd x} \\
    &= \frac{\int \exp(-F(y))\, \abs{\det [(1-\lambda) I_d]}^{-1}\,\dd y}{\int \exp(-F(x))\, \dd x} \\
    &= (1-\lambda)^{-d}\,.
\end{align*}
Then,
\begin{align*}
    \log \bE_\pi\exp(\lambda\,(F-F_* - d))
    &\le d\log\frac{1}{1-\lambda} -d\lambda
    \le \frac{d\lambda^2}{2\,(1-\lambda)}\,.
\end{align*}
According to the discussion in~\citep[\S 2.4]{BouLugMas13Con}, $F-F_*-d$ has a sub-gamma right tail with variance factor $d$ and scale parameter $1$, and therefore satisfies the tail bound
\begin{align*}
    \pi(F-F_* - d \ge \sqrt{2d\log(1/\delta)} + \log(1/\delta)) \le \delta\,.
\end{align*}
This proves the second statement.
\end{proof}

By the AM--GM inequality, $\sqrt{2d \log(1/\delta)} \le \frac{1}{2}\,(2d + \log(1/\delta))$. Based on Lemma~\ref{lem:concentration-F-gap}, for $t > 0$ and $\zeta \in (0,1)$, letting $\log(1/\delta) = \log(1/\zeta) + t$, we obtain
\begin{align}
    \label{eq:concentration-at-stationarity}
    \pi\bigl(F - F_\ast \ge 2d + \frac{3}{2} \log(1/\zeta) + \frac{3}{2}\, t\bigr) &= \pi\bigl(F - F_\ast \ge 2d + \frac{3}{2} \log(1/\delta)\bigr) \notag \\
    &\le \pi\bigl(F - F_\ast \ge d + \sqrt{2d \log(1/\delta)} + \log (1/\delta) \bigr) \notag \\
    &\le \delta = \zeta e^{-t}\,.
\end{align}

We next show how to transfer this concentration bound to the iterates by change of measure, beginning with a bound on the R\'enyi divergence at initialization.

Recall that in Algorithm~\ref{alg:outer-loop-Gibbs-Sampling}, the initial distribution $\rho_0$ is defined as 
\[
\rho_0(x) \propto \exp\Bigl( -g(x) - \frac{2\beta-\alpha_g}{2}\,\|x-x_\ast\|^2 \Bigr)\,,
\]
where $x_\ast$ is the shared minimizer of $f$ and $g$.

\begin{lemma}\label{lem:initial-distribution-outer-loop}
    Assume that $f$ is $\alpha_f$-convex and $\beta$-smooth, and $g$ is $\alpha_g$-convex, where $\alpha_g \ge -\beta$ and $\alpha \deq \alpha_f + \alpha_g > 0$.
    Then, for $\kappa \deq \beta/\alpha$,
    \begin{align*}\label{eq:warm-start-for-outer-loop}
        \sfR_\infty(\rho_0 \|\pi) \le \frac{d}{2}\log(2\kappa)\,.
    \end{align*}
\end{lemma}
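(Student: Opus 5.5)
The plan is to bound the density ratio $\rho_0/\pi$ pointwise, since $\sfR_\infty(\rho_0\|\pi) = \log \sup_x \rho_0(x)/\pi(x)$. We work with normalizing constants $Z_{\rho_0} \deq \int \exp(-g - \frac{2\beta-\alpha_g}{2}\,\norm{\cdot - x_\ast}^2)$ and $Z_\pi \deq \int e^{-f-g}$. Then
\[
\frac{\rho_0(x)}{\pi(x)} = \exp\Bigl(f(x) - \frac{2\beta-\alpha_g}{2}\,\norm{x-x_\ast}^2\Bigr)\,\frac{Z_\pi}{Z_{\rho_0}}\,.
\]
The argument splits into a pointwise bound on the exponent and a bound on the ratio of normalizing constants.

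\emph{Step 1: the exponent.} Since $x_\ast$ is the shared minimizer of $f$ and $g$ (after the preprocessing in the remark), we have $\nabla f(x_\ast)=0$. Then $\beta$-smoothness gives $f(x) \le f(x_\ast) + \frac{\beta}{2}\,\norm{x-x_\ast}^2$. Hence the exponent is at most $f(x_\ast) - \frac{\beta-\alpha_g}{2}\,\norm{x-x_\ast}^2 \le f(x_\ast)$, using $\beta \ge \alpha_g$ as in Theorem~\ref{thm:main-result}. If only $\alpha_g\ge -\beta$ is assumed, this is the point to recheck; I would add $\beta\ge\alpha_g$ as a standing assumption.

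\emph{Step 2: normalizing constants.} Write $g = G + \frac{\alpha_g}{2}\,\norm{\cdot-x_\ast}^2$, where $G$ is convex and minimized at $x_\ast$. By $\alpha_f$-convexity and $\nabla f(x_\ast)=0$, we have $f \ge f(x_\ast) + \frac{\alpha_f}{2}\,\norm{\cdot - x_\ast}^2$. This gives
\[
Z_\pi \le e^{-f(x_\ast)} \int \exp\Bigl(-G(x) - \frac{\alpha}{2}\,\norm{x-x_\ast}^2\Bigr)\,\dd x\,, \qquad Z_{\rho_0} = \int \exp\bigl(-G(x) - \beta\,\norm{x-x_\ast}^2\bigr)\,\dd x\,.
\]
Note that $\frac{2\beta-\alpha_g}{2}+\frac{\alpha_g}{2}=\beta$. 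The factor $e^{-f(x_\ast)}$ cancels against the $e^{f(x_\ast)}$ from Step 1.

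\emph{Step 3: scaling, which is the key step.} Set $c \deq \sqrt{2\beta/\alpha} = \sqrt{2\kappa}\ge 1$; when $\kappa<1$ one replaces $\kappa$ by $1\vee\beta/\alpha$ as in the paper's convention. In the integral for $Z_\pi$, substitute $x = x_\ast + c u$. The Gaussian term becomes $\frac{\alpha c^2}{2}\norm{u}^2 = \beta\norm{u}^2$, and a Jacobian factor $c^d$ appears. So we need
\[
\int e^{-G(x_\ast + cu) - \beta\norm u^2}\,\dd u \le \int e^{-G(x_\ast+u) - \beta \norm u^2}\,\dd u\,.
\]
This follows from the pointwise inequality $G(x_\ast + cu) \ge G(x_\ast+u)$ for $c\ge 1$. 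That inequality holds because a convex function restricted to a ray from its minimizer is nondecreasing: indeed $G(x_\ast+u) \le \frac1c G(x_\ast+cu) + (1-\frac1c)G(x_\ast) \le G(x_\ast+cu)$. The same ray-monotonicity also covers $G$ taking the value $+\infty$, as for convex indicators.

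\emph{Conclusion and main obstacle.} Combining the three steps gives $\sup \rho_0/\pi \le c^d = (2\kappa)^{d/2}$, which is the claim $\sfR_\infty(\rho_0\|\pi)\le \frac d2 \log(2\kappa)$. The main obstacle is Step 3: obtaining a bound on $Z_\pi/Z_{\rho_0}$ with no smoothness of $g$. The ray-monotonicity scaling trick handles this, in the same spirit as the proof of Lemma~\ref{lem:concentration-F-gap}. The only other delicate point is the sign condition $\beta\ge\alpha_g$ used in Step 1.
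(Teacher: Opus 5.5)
Your proof is correct and follows the paper's route: a pointwise density ratio, the smoothness upper bound on $f(x)$, the $\alpha_f$-convexity lower bound inside $Z_\pi$, and then a comparison of normalizing constants. The one difference is that your Step~3 proves that comparison directly, via the substitution $x = x_\ast + \sqrt{2\kappa}\,u$ and ray-monotonicity of $G$, whereas the paper imports it from \citet[Proposition~61]{LeeSheTia21RGO}. You are also right that $\beta \ge \alpha_g$ is needed; the paper uses it silently when it drops the factor $\exp(-\frac{\beta-\alpha_g}{2}\,\|x-x_\ast\|^2)$.

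One small correction. Step~3 only needs $c=\sqrt{2\kappa}\ge 1$, i.e.\ $\kappa\ge 1/2$. This holds automatically, because $\alpha_f\le\beta$ (from $\alpha_f$-convexity plus $\beta$-smoothness) and $\alpha_g\le\beta$ together give $\alpha\le 2\beta$. So there is no need to replace $\kappa$ by $1\vee\beta/\alpha$, and doing so would only give the weaker bound $\frac d2\log 2$ when $\kappa<1$.
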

\begin{proof}
For any $x \in \bR^d$,
\begin{align*}
    \frac{\rho_0(x)}{\pi(x)} &= \frac{\exp(-g(x) - \frac{2\beta-\alpha_g}{2}\, \|x-x_\ast\|^2)}{\exp(-f(x)-g(x))} \cdot \frac{\int \exp(-f(\eta) -g(\eta) )\, \dd \eta}{\int \exp(-g(\eta) - \frac{2\beta-\alpha_g}{2}\, \|\eta-x_\ast\|^2)\, \dd \eta} \\
    &\le \frac{\exp(-\frac{2\beta-\alpha_g}{2}\,\|x-x_\ast\|^2)}{\exp(-f_\ast - \frac{\beta}{2}\,\|x-x_\ast\|^2)} \cdot \frac{\int \exp(-g(\eta) - f_\ast - \frac{\alpha_f}{2}\,\|\eta - x_\ast\|^2)\, \dd \eta}{\int \exp(-g(\eta) - \frac{2\beta-\alpha_g}{2}\,\|\eta - x_\ast\|^2)\, \dd \eta} \\
    &= \frac{\int \exp(-g(\eta) - \frac{\alpha_f}{2}\,\|\eta - x_\ast\|^2)\,\dd \eta}{\int \exp(-g(\eta) - \frac{2\beta-\alpha_g}{2}\,\|\eta - x_\ast\|^2)\,\dd \eta} \le \Bigl(1 + \frac{2\beta-\alpha_g-\alpha_f}{\alpha} \Bigr)^{d/2} = (2\kappa)^{d/2}\,,
\end{align*}
where the last inequality follows from~\citet[Proposition 61]{LeeSheTia21RGO} with $f = g + \frac{\alpha_f}{2}\,\|\cdot - x_\ast\|^2$ and $\frac{1}{\lambda} = 2\beta-\alpha_g$. 
\end{proof}
Now we apply Lemma~\ref{lem:change-of-measure-lemma} (change of measure) with $q=\infty$ and $\phi = F-F_\ast -2d-3\log(1/\zeta)$. By Eq.~\eqref{eq:concentration-at-stationarity}, we have:
\[
\rho_0 \Bigl(F - F_\ast -2d-\frac{3}{2}\log(1/\zeta) \ge \frac{3}{2}\,t\Bigr) \le \zeta \exp\Bigl(-t +\frac{d}{2}\log(2\kappa)\Bigr) \le \zeta\,,
\]
where we choose $t = \frac{d}{2} \log(2\kappa)$.

By the data-processing inequality, for each $k \in \bN^\ast$, we have 
\[
\sfR_\infty (\rho_k \| \pi) \le \sfR_\infty(\rho_0 \| \pi) \,.
\]
Therefore, by Lemma~\ref{lem:change-of-measure-lemma}, Eq.~\eqref{eq:concentration-at-stationarity}, Lemma~\ref{lem:initial-distribution-outer-loop}, and our choice of $t$,
\begin{align*}
    \rho_k\Bigl(F - F_\ast -2d - \frac{3}{2}\log(1/\zeta) \ge \frac{3}{2}\,t\Bigr) &\le \zeta \exp\Bigl(-t + \sfR_\infty(\rho_k \| \pi) \Bigr) \\
    &\le \zeta\exp\Bigl(-t + \sfR_\infty(\rho_0 \| \pi) \Bigr) \le \zeta\,.
\end{align*}
Denote $y_k \overset{\cD}{=} x_k + \sqrt{h}\xi$, where $\xi \sim \cN(0,I_d)$ is independent of $x_k$. Let $x_k^+ \deq \text{prox}_{hg}(x_k - h \nabla f(x_k) )$ and $y_k^+ \deq \text{prox}_{hg}(y_k - h \nabla f(y_k) )$. Since $\text{prox}_{hg}$ is non-expansive, 
\begin{align*}
    \norm{x_k^+ - y_k^+} &= \norm{\prox_{hg}(x_k - h \nabla f(x_k)) - \prox_{hg}(y_k - h \nabla f(y_k))} \\
    &\leq \norm{x_k - h \nabla f(x_k) - (y_k - h \nabla f(y_k))} \\
    &\leq \norm{x_k - y_k} + h\norm{\nabla f(x_k) - \nabla f(y_k)} \\
    &\leq (1 + \beta h) \norm{x_k - y_k} \\
    &\leq 2 \norm{x_k - y_k} \overset{\cD}{=} 2 \sqrt{h}\|\xi\| \,,
\end{align*}
since $h \leq 1/\beta$, which implies
\begin{equation}
    \label{eq:y-y^+-high-prob-bound}
    \|y_k - y_k^+\|^2 \le  3\,(\|y_k-x_k\|^2 + \|x_k-x_k^+\|^2 + \|x_k^+ - y_k^+\|^2) \le 3\,\|x_k - x_k^+\|^2 + 9h\,\|\xi\|^2\,.
\end{equation}
By Gaussian concentration, we have that for any $t_1 = \sqrt{2\log (1/\zeta)} > 0$, 
\[
\bP(\|\xi\| \ge \sqrt{d} + t_1) \le \exp(-t_1^2/2) = \zeta\,.
\]
Combined with Eq.~\ref{eq:y-y+bounded-by-F} and Eq.~\ref{eq:y-y^+-high-prob-bound}, with probability $\ge 1 - 2\zeta$ under the $y$-marginal at the $k$-th iteration,
\begin{align}
    \|y_k - y_k^+\|^2 &\le 3 \cdot 2h\, \bigl(2d + \frac{3}{4}d\log (2\kappa) + \frac{3}{2}\log(1/\zeta)\bigr) + 9h \cdot \bigl( 2d + 4\log(1/\zeta)\bigr) \notag \\
    &= 30dh + \frac{9}{2}dh\log(2\kappa) + 45 h \log(1/\zeta)\,.\label{eq:bound-on-norm-square}
\end{align}

\subsection{Analysis using the perfect RGO}\label{subsection:analysis-along-perfect-RGO}

We recall the following results from~\citet{chen2022improved}.

\begin{lemma}[Convergence of the ideal proximal sampler]\label{lem:KL-convergence-outer-loop}
    Let $(\rho_k)_{k\in\bN}$ be the iterates of the proximal sampler with step size $h$ and stationary distribution $\pi$.
    \begin{enumerate}
        \item Let $\pi$ satisfy an LSI with constant $1/\alpha$. Then,
    \[
    \KL( \rho_k \| \pi) \le \frac{\KL(\rho_0 \| \pi)}{(1+\alpha h)^{2k}}\,.
    \]
    \item Let $\pi$ satisfy a PI with constant $1/\alpha$.
    Then,
    \begin{align*}
        \chi^2(\rho_k\|\pi) \le \frac{\chi^2(\rho_0\|\pi)}{(1+\alpha h)^{2k}}\,.
    \end{align*}
    \item Let $\pi$ be log-concave.
    Then,
    \begin{align*}
        \KL(\rho_k\|\pi) \le \frac{W_2^2(\rho_0,\pi)}{2kh}\,.
    \end{align*}
    \end{enumerate}
\end{lemma}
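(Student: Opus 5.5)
The lemma is quoted from \citet{chen2022improved}, but here is how I would prove it directly. The plan is to view one step of the proximal sampler as two channels and to run a forward/backward heat-flow argument in each. Write $\rho_k$ for the law of $X_k$ and $\pi^Y = \pi * \cN(0,hI_d)$ for the $y$-marginal. The forward step gives $\rho_k^Y = \rho_k * \cN(0,hI_d)$, which is the law at time $h$ of Brownian motion started from $\rho_k$. The backward step applies the kernel $\pi^{X\mid Y}$. By Bayes' rule, this kernel equals the time-$h$ law of the time reversal of the same Brownian motion started from $\pi$. It can therefore be written as an SDE
\[
\dd Z_t = \nabla \log(\pi * \cN(0,(h-t)I_d))(Z_t)\,\dd t + \dd B_t\,,
\]
started at $Z_0 \sim \pi^Y$, and the same SDE driven from $\rho_k^Y$ yields $\rho_{k+1}$.

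For part 1, I would differentiate $\KL$ along each flow. Along the forward heat flow, $\partial_t \KL(\rho_t\|\pi_t) = -\tfrac12 \FI(\rho_t\|\pi_t)$, where $\pi_t \deq \pi * \cN(0,tI_d)$. The Gaussian convolution of an $\LSI(1/\alpha)$ measure satisfies LSI with constant $1/\alpha + t$. Hence $\partial_t \KL \le -\frac{\alpha}{1+\alpha t}\KL$, and integrating from $0$ to $h$ gives a factor $(1+\alpha h)^{-1}$. The backward flow produces the same dissipation identity with the constants traversed in reverse. Along it, LSI of $\pi_{h-t}$ gives another factor $(1+\alpha h)^{-1}$. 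Together this gives $(1+\alpha h)^{-2}$ per iteration, and iterating $k$ times gives the claim.

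Part 2 is identical with $\chi^2$ in place of $\KL$. There $\partial_t \chi^2 = -\bE_{\pi_t}\|\nabla(\rho_t/\pi_t)\|^2$, and PI of $\pi_t$ has constant $1/\alpha + t$, since convolution preserves PI with additive constants. For part 3 I would use a Wasserstein-type argument. Since $\pi$ is log-concave, the proximal step is a gradient-flow-like contraction: $h(\KL(\rho_{k+1}\|\pi)) \le \tfrac12 (W_2^2(\rho_k,\pi) - W_2^2(\rho_{k+1},\pi))$, which is the EVI for the JKO-type step. Summing over $k$ and using that $\KL(\rho_k\|\pi)$ is non-increasing by data processing gives $\KL(\rho_k\|\pi)\le W_2^2(\rho_0,\pi)/(2kh)$.

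I expect the main obstacle to be the backward step, i.e.\ making the reverse SDE rigorous and checking that the LSI or PI constants along $\pi_{h-t}$ combine to give exactly $(1+\alpha h)^{-1}$. The cleanest route is probably to apply the chain rule for $\KL$ to the joint laws, which bounds the second half by the strong data-processing of the first. For part 3, establishing the one-step EVI is the delicate point; I would aim to obtain it by combining convexity of $\KL(\cdot\|\pi)$ along Wasserstein geodesics with the time-reversal representation.
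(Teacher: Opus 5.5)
\textbf{Parts 1 and 2.} The paper gives no proof of this lemma; it simply quotes \citet{chen2022improved}. Your treatment of parts 1 and 2 is the argument of that reference and is correct: simultaneous heat flow for the forward step, the reverse SDE for the backward step, the same dissipation identity in both, and $\LSI$/$\PI$ of $\pi*\cN(0,sI_d)$ with constant $1/\alpha+s$.

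One caution: do not replace the backward half by the ``chain rule on joint laws''. That only gives plain data processing, $\KL(\rho_{k+1}\|\pi)\le\KL(\rho_k^Y\|\pi^Y)$, with no second factor $(1+\alpha h)^{-1}$. The second factor comes from the functional inequality for $\pi*\cN(0,(h-t)I_d)$ along the reverse flow.

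\textbf{Part 3 has a genuine gap.} Its whole content is the one-step inequality $h\,\KL(\rho_{k+1}\|\pi)\le\tfrac12\bigl(W_2^2(\rho_k,\pi)-W_2^2(\rho_{k+1},\pi)\bigr)$, which you assert but do not prove. It is not the JKO evolution variational inequality, because the proximal sampler step is not the minimizer of $\KL(\cdot\|\pi)+\frac{1}{2h}W_2^2(\cdot,\rho_k)$. So that result cannot be imported.

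Try making your suggested mechanism precise. Set $\pi_s\deq\pi*\cN(0,sI_d)$ and $V_s\deq-\log\pi_s$, which is convex. Write the backward step as the flow $\partial_t\mu_t=\nabla\cdot\bigl(\mu_t\nabla(\tfrac12\log\mu_t+V_{h-t})\bigr)$, and couple $\mu_t$ optimally with $\pi_{h-t}$. Use convexity of $V_{h-t}$ at both endpoints together with displacement convexity of entropy. The natural bound this produces is $\frac{\dd}{\dd t}W_2^2(\mu_t,\pi_{h-t})\le-\KL(\mu_t\|\pi_{h-t})$.

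Combine this with monotonicity of $\KL$ along the flow and $W_2$-nonexpansiveness of the forward step. You get $h\,\KL(\rho_{k+1}\|\pi)\le W_2^2(\rho_k,\pi)-W_2^2(\rho_{k+1},\pi)$. Telescoping then gives only $\KL(\rho_k\|\pi)\le W_2^2(\rho_0,\pi)/(kh)$.

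The missing factor $\tfrac12$ would have to come from the forward half, and that half cannot supply it in this form. If $\rho_k$ is a translate of $\pi$, the heat flow leaves $W_2(\rho_k^Y,\pi^Y)=W_2(\rho_k,\pi)$ unchanged, while $\KL(\rho_k^Y\|\pi^Y)>0$.

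So you need one of two things:
\begin{itemize}
\item a genuinely different argument that extracts dissipation from both halves; or
\item to accept $kh$ instead of $2kh$ in the denominator.
\end{itemize}
The second option is harmless for how the lemma is used in the proof of Theorem~\ref{thm:extensions}, where only $K=\cO\bigl(W_2^2(\rho_0,\pi)/(h\eps^2)\bigr)$ matters.
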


\subsection{Error analysis for the proximal sampler}
\label{subsection:analysis-along-RGO-hat}
In this subsection, we denote $\rho_k$ and $\hat \rho_k$ as the marginal distributions for $x$ along the proximal sampler using the perfect RGO $\pi^{X \mid Y_k}$ and our implemented RGO $\hat \pi^{X \mid Y_k}$ at the $k$-th iteration in Algorithm \ref{alg:outer-loop-Gibbs-Sampling}, respectively. Note that $\rho_0 = \hat{\rho}_0$ as defined in the algorithm. We also denote the $y$-marginals along the perfect RGO and the implemented RGO at the $k$-th iteration as $\rho^Y_k$ and $\hat \rho^Y_k$, respectively. 

The tools that we use for the analysis are the results in the previous subsections that hold with high probability, as well as the following lemma on coupling: 
\begin{lemma}[{\citep[Proposition 4.7]{wilmer2009markov}}]
    \label{lem:coupling-TV}
    Let $\mu$ and $\nu$ be two probability distributions on $\Omega$. Then 
    \[
    \| \mu - \nu \|_\TV = \inf \{ \bP(X \neq Y) \mid (X, Y) \text{ is a coupling of $\mu$ and $\nu$} \}\,.
    \]
\end{lemma}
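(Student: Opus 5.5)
We prove the two inequalities separately: every coupling satisfies $\bP(X\neq Y)\ge \|\mu-\nu\|_\TV$, and an explicit coupling attains equality. Throughout we use $\|\mu-\nu\|_\TV = \sup_{A}\abs{\mu(A)-\nu(A)}$, with the supremum over measurable $A\subseteq\Omega$. We also use the equivalent formula $\|\mu-\nu\|_\TV = 1-\int (p\wedge q)\,\dd\lambda$, where $\lambda \deq \mu+\nu$ is a common dominating measure and $p = \dd\mu/\dd\lambda$, $q=\dd\nu/\dd\lambda$ are the densities. This formula follows by taking $A=\{p>q\}$ as the optimizing set and writing $\int (p-q)_+\,\dd\lambda = 1-\int p\wedge q\,\dd\lambda$.

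For the lower bound, fix any coupling $(X,Y)$ of $\mu$ and $\nu$ and any measurable $A$. Then
\[
\mu(A)-\nu(A) = \bP(X\in A)-\bP(Y\in A) \le \bP(X\in A,\,Y\notin A)\le \bP(X\neq Y)\,.
\]
The same bound holds with the roles of $\mu$ and $\nu$ swapped. Taking the supremum over $A$ gives $\|\mu-\nu\|_\TV\le \bP(X\neq Y)$, and hence $\|\mu-\nu\|_\TV$ is at most the infimum.

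For the upper bound, we construct the maximal coupling. Set $m \deq \int p\wedge q\,\dd\lambda = 1-\|\mu-\nu\|_\TV$. If $m=1$ then $\mu=\nu$, and we take $X=Y\sim\mu$. If $m=0$, any coupling (for example the product coupling) works, since $\bP(X\neq Y)\le 1$. Otherwise, flip a coin $B$ with $\bP(B=1)=m$:
\begin{itemize}
    \item on $\{B=1\}$, draw $Z$ with density $(p\wedge q)/m$ and set $X=Y=Z$;
    \item on $\{B=0\}$, independently draw $X$ with density $(p-q)_+/(1-m)$ and $Y$ with density $(q-p)_+/(1-m)$.
\end{itemize}
The marginal density of $X$ is then $(p\wedge q)+(p-q)_+ = p$, and that of $Y$ is $(p\wedge q) + (q-p)_+ = q$, so $(X,Y)$ is a coupling of $\mu$ and $\nu$. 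Moreover, $\bP(X\neq Y)\le \bP(B=0) = 1-m = \|\mu-\nu\|_\TV$.

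Combining the two bounds shows that the infimum equals $\|\mu-\nu\|_\TV$ and is attained by the maximal coupling. The main care point is the normalization bookkeeping in the construction. One must check that $\int (p-q)_+\,\dd\lambda = \int (q-p)_+\,\dd\lambda = 1-m$, which follows because $\int p\,\dd\lambda=\int q\,\dd\lambda=1$, and handle the degenerate cases $m\in\{0,1\}$ as above. Everything else is immediate.
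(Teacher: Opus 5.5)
Your proof is correct and is essentially the standard argument of Levin--Peres--Wilmer that the paper cites (the paper gives no proof of its own): first the event-wise bound $\mu(A)-\nu(A)\le \bP(X\in A,\,Y\notin A)\le\bP(X\neq Y)$ for an arbitrary coupling, then the explicit maximal coupling built from the overlap $p\wedge q$ and the two disjoint excess parts. Your only departure is working with densities relative to $\mu+\nu$ instead of point masses. This extends their finite-$\Omega$ argument to general spaces such as $\bR^d$, where the paper actually applies the lemma; there one implicitly needs the diagonal to be measurable so that $\{X\neq Y\}$ is an event.
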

Let $\zeta \in (0,1)$. Suppose at the $k$-th iteration, we have iterates $x_k \sim \rho_k$ and $\hat x_k \sim \hat \rho_k$. By Lemma \ref{lem:coupling-TV}, there exists a coupling of $\rho_k$ and $\hat \rho_k$ for which $x_k = \hat x_k$ with probability $1 - \| \rho_k - \hat \rho_k \|_\TV$. Draw $\xi \sim \cN(0, I_d)$ independent of this coupling and set $y_k \deq x_k + \sqrt h\xi$, $\hat y_k \deq \hat x_k + \sqrt h \xi$. Then, we have $y_k = \hat y_k$ with the same probability under this coupling of $\rho_k^Y$ and $\hat \rho_k^Y$. 

At the $(k+1)$-th step, $x_{k+1} \sim \rho^Y_k \pi^{X \mid Y_k}$ and $\hat x_{k+1} \sim \hat \rho^Y_k \hat \pi^{X \mid Y_k}$.
We are able to show that $\pi^{X \mid Y_k=y}$ and our implementation $\hat \pi^{X \mid Y_k=y}$ are close with the following corollary.
\begin{cor}[RGO implementation]\label{cor:RGOhat-is-close}
It holds that
\[
\bigl\lVert \pi^{X \mid Y_k=y_k} - \hat \pi^{X \mid Y_k=y_k} \bigl\rVert_{\TV} \le \zeta \qquad \text{after } N=\cO(\log(1/\zeta)) \text{ iterations}
\]
in the following two cases.
\begin{enumerate}
    \item Let $f$ be $\alpha_f$-convex and $\beta$-smooth and $g$ be $\alpha_g$-convex, where $\alpha_g\ge 0$ and $\alpha \deq \alpha_f + \alpha_g > 0$. Here, we assume that $y_k \in \cE$, where $\cE$ is the event on which $\|y_k - y_k^+\|^2$ enjoys the concentration bound~\eqref{eq:bound-on-norm-square}, and that $h \asymp 1/\bigl(\beta \sqrt{d\log(2\kappa)}\,\log^2(1/\zeta)\bigr)$ with a sufficiently small constant.
    \item Let $f$ be continuously differentiable and $L$-Lipschitz, and let $g$ be convex.
    Here, we take $h\asymp 1/\bigl(L^2\log^3(1/\zeta)\bigr)$ for a sufficiently small constant.
\end{enumerate}
\end{cor}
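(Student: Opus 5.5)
The plan is to apply Theorem~\ref{thm:indep_mh} to the target $\nu = \pi^{X\mid Y_k=y_k}$ and the proposal $\mu = \RGO_{g,h,y_k-h\nabla f(y_k)}$, with accuracy parameter $\eps=\zeta$. This requires three ingredients.
\begin{enumerate}
\item[(i)] $\nu$ satisfies a Cheeger inequality with some constant.
\item[(ii)] $\mu$ and $\nu$ are atomless.
\item[(iii)] For all $1\le q\le C\log(1/\zeta)$, both divergences satisfy $\sfR_q(\mu\|\nu)\vee\sfR_q(\nu\|\mu)\le C_\sfR q^\gamma$, with $C_\sfR \ll \log^{-(\gamma+1)}(1/\zeta)$.
\end{enumerate}
Given these, the theorem yields TV error at most $\zeta$ after $N=\cO(\log(1/\zeta))$ iterations, which is the claim.

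Ingredients (i) and (ii) are routine. Both measures have Lebesgue densities, so they are atomless. In case 1, $\nu$ is $(1/h-\beta)$-strongly log-concave, hence $\ge 1/(2h)$ for $h\le 1/\beta$. It therefore satisfies an LSI, and in particular a Cheeger inequality (log-concavity plus Poincaré gives Cheeger, via Buser/Ledoux). In case 2, $\nu$ is an $L$-Lipschitz perturbation of a $1/h$-strongly log-concave measure. As in the proof of the R\'enyi lemma, it satisfies an LSI with constant $O(h)$ once $h\lesssim 1/L^2$ (via \citet{BriPed25HeatFlow}). It is also log-concave up to a bounded-gradient perturbation, so a Cheeger inequality follows. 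Since the iteration count in Theorem~\ref{thm:indep_mh} is dimension-free and independent of $\mathsf{Ch}$, only the existence of such a constant matters.

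The substantive step is (iii), which I would take with $\gamma=2$ using the preceding R\'enyi lemma.

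In case 1, the lemma requires $h\lesssim 1/(\beta q)$ for all $q\le C\log(1/\zeta)$. This holds since $h\asymp 1/(\beta\sqrt{d\log(2\kappa)}\log^2(1/\zeta))$. On the event $\cE$, plugging \eqref{eq:bound-on-norm-square} into \eqref{eq:renyi-divergence-bound-A3} gives
\[
\sfR_q \lesssim q^2\bigl(\beta^2h^2 d\log(2\kappa) + \beta^2h^2\log(1/\zeta) + \beta^2 dh^2\bigr) \lesssim q^2\,\beta^2 h^2\bigl(d\log(2\kappa)+\log(1/\zeta)\bigr).
\]
With the chosen $h$, the term $\beta^2h^2 d\log(2\kappa)$ is $\asymp c^2\log^{-4}(1/\zeta)$. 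The remaining term is $\beta^2h^2\log(1/\zeta)\lesssim c^2\log^{-3}(1/\zeta)$, using $d\log(2\kappa)\gtrsim 1$ (since $\kappa\ge1$). Thus $C_\sfR\lesssim c^2\log^{-3}(1/\zeta)$, which is $\ll \log^{-(\gamma+1)}(1/\zeta)$ for $\gamma=2$ when the implied constant $c$ is small enough.

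In case 2, \eqref{eq:renyi_bd_Lip} gives $\sfR_q\lesssim q^2hL^2$. With $h\asymp 1/(L^2\log^3(1/\zeta))$ this yields $C_\sfR\lesssim c\log^{-3}(1/\zeta)$, which is again sufficient. The condition $h\lesssim1/L^2$ clearly holds.

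The main obstacle I expect is bookkeeping in case 1. The R\'enyi bound must hold uniformly over $q$ up to $C\log(1/\zeta)$, so the step-size condition of the lemma must be checked at the largest $q$. The $\log(1/\zeta)$ term coming from the concentration event must also be absorbed; the extra $\log^2$ in $h$ is exactly what makes both work. A secondary point is the case $q=1$ (KL), which follows as a limit of the same bound.
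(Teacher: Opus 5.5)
Your proposal is correct and follows the paper's proof: apply Theorem~\ref{thm:indep_mh} with $\gamma=2$. In case 1 you plug the bound~\eqref{eq:bound-on-norm-square} on $\cE$ into~\eqref{eq:renyi-divergence-bound-A3}, giving $C_\sfR \lesssim \beta^2 d h^2\log(2\kappa)\log(1/\zeta) \asymp c^2\log^{-3}(1/\zeta)$, and in case 2 you take $C_\sfR \asymp hL^2$ from~\eqref{eq:renyi_bd_Lip}. Your explicit check of the Cheeger hypothesis goes beyond the paper, which does not verify it. In case 2, ``log-concave up to a bounded-gradient perturbation'' is not by itself a valid route to Cheeger, since a merely $C^1$ Lipschitz $f$ gives no Hessian lower bound for a Buser--Ledoux argument. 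This is repaired by the Lipschitz transport map from a Gaussian in \citet{BriPed25HeatFlow}, which transfers Gaussian isoperimetry to $\nu$; in any case $\mathsf{Ch}$ plays no quantitative role, since the proof of Theorem~\ref{thm:indep_mh} takes $r=\mathsf{Ch}$.
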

\begin{proof}
    This corollary directly follows from Theorem~\ref{thm:indep_mh} on the independent Metropolis--Hastings algorithm. Let $\zeta \in (0,1)$. In the first case, since $y_k \in \cE$, the concentration bound in \S\ref{subsection:concentration-along-perfect-RGO} reads
    \[
    \norm{y_k - y_k^+}^2 \lesssim dh \log(2\kappa) + h \log(1/\zeta) \,.
    \]
    Substituting into Eq.~\eqref{eq:renyi-divergence-bound-A3}, we have
    \begin{align*}
    \sfR_p(\mu \| \nu) \vee \sfR_p(\nu\|\mu)
    &\lesssim \beta^2 p^2 d h^2 + \beta^2 p^2 h \left(dh + dh \log (2\kappa) + h\log (1/\zeta)\right)\\
    &\lesssim \underbrace{\beta^2 d h^2 \log(2\kappa) \log(1/\zeta)}_{C_\sfR} \,\,p^2\,.
    \end{align*}
    By choosing $h \asymp 1/\bigl(\beta \sqrt{d\log(2\kappa)}\,\log^2(1/\zeta)\bigr)$ with a sufficiently small constant, we can apply Theorem~\ref{thm:indep_mh} with $\gamma=2$, thus proving the desired result.

    In the second case, we instead use Eq.~\eqref{eq:renyi_bd_Lip}, which states that
    \begin{align*}
        \sfR_p(\mu \| \nu) \vee \sfR_p(\nu\|\mu)
        &\lesssim \underbrace{hL^2}_{C_\sfR}\,p^2\,.
    \end{align*}
    Here, we choose $h\asymp 1/\bigl(L^2\log^3(1/\zeta)\bigr)$ and apply Theorem~\ref{thm:indep_mh} with $\gamma =2$.
\end{proof}

By the triangle inequality and the data-processing inequality,
\begin{align}
    \label{eq:TV-outer-loop-induction-step}
    \|\rho_{k+1} - \hat \rho_{k+1} \|_\TV &= \|\rho_k^Y \pi^{X \mid Y_k} - \hat \rho_k^Y \hat \pi^{X \mid Y_k} \|_\TV  \notag \\
    &\le \|\rho_k^Y \pi^{X \mid Y_k} - \rho_k^Y \hat \pi^{X \mid Y_k} \|_\TV + \|\rho_k^Y \hat \pi^{X \mid Y_k} - \hat \rho_k^Y \hat \pi^{X \mid Y_k} \|_\TV \notag \\
    &\le \|\rho_k^Y \pi^{X \mid Y_k} - \rho_k^Y \hat \pi^{X \mid Y_k} \|_\TV + \|\rho_k^Y - \hat \rho_k^Y \|_\TV \notag \\
    &\le \|\rho_k^Y \pi^{X \mid Y_k} - \rho_k^Y \hat \pi^{X \mid Y_k} \|_\TV + \|\rho_k - \hat \rho_k\|_\TV \,.
\end{align}
Given $Y_k = y$, if $y \in \cE$, draw $(x_{k+1}, \hat x_{k+1})$ from the maximal coupling as in Lemma \ref{lem:coupling-TV}; otherwise, draw $(x_{k+1}, \hat x_{k+1})$ from the independent coupling $\pi^{X \mid Y_k} \otimes \hat\pi^{X \mid Y_k}$. Then,
\begin{align*}
    \|\rho_k^Y \pi^{X \mid Y_k} - \rho_k^Y \hat \pi^{X \mid Y_k} \|_\TV &\le \| \text{Law}(y, x_{k+1}) - \text{Law} (y, \hat x_{k+1})\|_\TV \quad (\text{data-processing inequality})\\
    &\le \bP(x_{k+1} \neq \hat x_{k+1}) \quad (\text{by Lemma~\ref{lem:coupling-TV}})\\
    &= \bP(x_{k+1} \neq \hat x_{k+1},\, y \notin \cE) + \bP(x_{k+1} \neq \hat x_{k+1},\, y \in \cE) \\
    &\le \rho_k^Y(\cE^\comp) + \bP(x_{k+1} \ne \hat x_{k+1},\, y \in \cE) \\
    &\le 2\zeta + \zeta = 3\zeta\,,
\end{align*}
where for the first term we used $\rho_k^Y(\cE) \ge 1-2\zeta$ from \S\ref{subsection:concentration-along-perfect-RGO} and for the second term we note that under our constructed coupling for $y \in \cE$, by Corollary~\ref{cor:RGOhat-is-close},
\[
\bP(x_{k+1} \neq \hat x_{k+1},\, y \in \cE) = \int_\cE \bigl\| \pi^{X \mid Y_k = y} - \hat \pi^{X \mid Y_k = y}\bigr\|_\TV\, \rho^Y_k(\dd y) \le \zeta\,.
\]
Therefore, the induction step in Eq.~\eqref{eq:TV-outer-loop-induction-step} becomes 
\[
\|\rho_{k+1} - \hat \rho_{k+1} \|_\TV \le 3\zeta + \|\rho_k - \hat \rho_k \|_\TV\,.
\]
Since $\rho_0 = \hat \rho_0$, by induction, we have
\[
\| \rho_K - \hat \rho_K \|_\TV \le 3K\zeta\,.
\]
\subsection{Proof of Theorems~\ref{thm:main-result} and~\ref{thm:extensions}}\label{subsection:proof-of-main-theorem}

\begin{proof}[Proof of Theorem~\ref{thm:main-result}]
    Let $\eps, \zeta \in (0,1)$, and $h \asymp 1/\bigl(\beta \sqrt{d \log (2\kappa)}\, \log^2(1/\zeta) \bigr)$ as in Corollary~\ref{cor:RGOhat-is-close}.
    Since $\pi \propto e^{-f-g}$ is $\alpha$-strongly log-concave, it satisfies an LSI with constant $1/\alpha$.
    Hence, by Pinsker's inequality, Lemma~\ref{lem:KL-convergence-outer-loop}, and monotonicity of R\'enyi divergence, we have 
    \begin{align}
        \| \rho_k - \pi \|_\TV \le \sqrt{\frac{1}{2}\, \KL (\rho_k \| \pi)} \le \sqrt{\frac{\KL(\rho_0 \|\pi)}{2\,(1+\alpha h)^{2k}}} \le \sqrt{\frac{\sfR_\infty (\rho_0 \| \pi)}{2\,(1+\alpha h)^{2k}}} \,.
    \end{align}
    By Lemma~\ref{lem:initial-distribution-outer-loop}, $\sfR_\infty(\rho_0 \| \pi) \le \frac{d}{2} \log(2\kappa)$, so $\| \rho_K - \pi \|_\TV \le \eps$ if $K \ge \frac{\log\frac{d\log(2\kappa)}{4\eps^2}}{2\log (1+\alpha h)}$.
    Since $\log (1 +x ) \ge x - x^2/2$ for all $x \ge 0$, $\log (1 + \alpha h)\ge \alpha h\,(1-\alpha h / 2)\ge \alpha h/2$ because $\alpha h < 1$. Then it suffices to choose
    \[
    K = \cO \Bigl( \frac{1}{\alpha h} \log \frac{d\log(2\kappa)}{\eps^2} \Bigr)\,.
    \]
    Moreover, by the bounds on total variation in \S\ref{subsection:analysis-along-RGO-hat}, we have
        $\|\rho_K - \hat \rho_K\|_\TV \le 3K\zeta$, where each iteration has complexity $N = \cO(\log\frac{1}{\zeta})$.
    If both terms are bounded by $\eps/2$, then the proof is complete. Furthermore, if the latter term is bounded by $\eps/2$, so is the former because $K \in \bN^\ast$. Hence, we can let $\zeta = \eps/(6K)$. By plugging in $h$, $K$ depends on $\zeta$ and itself as following:
    \[
    K = \cO \Bigl( \kappa \sqrt{d\log(2\kappa)} \log^2 \frac{1}{\zeta} \log \frac{\sqrt{d \log (2\kappa)}}{6K\zeta} \Bigr) \,.
    \]
    Neglecting logarithmic terms on $d$ and $\kappa$, we obtain that $K = \widetilde \cO (\kappa \sqrt{d} \log^3 \frac{1}{\zeta})$. Since $N = \cO(\log\frac{1}{\zeta})$ and $\zeta = \eps/(6K)$, the final complexity is 
    \begin{align*}
    KN &= \widetilde \cO \Bigl( \kappa \sqrt{d} \log^4 \frac{1}{\eps} \Bigr)\,. \qedhere
    \end{align*}
\end{proof}

\begin{proof}[Proof of Theorem~\ref{thm:extensions}]
\label{app:proof-of-extensions}
We use the proximal sampler algorithm.
In all cases, the number of outer loop iteration $K$ is given by Lemma~\ref{lem:KL-convergence-outer-loop} to be
\begin{align}\label{eq:outer_loop_bds}
    K = \cO\Bigl(\frac{1}{\alpha h} \log \frac{\KL(\rho_0\|\pi)}{\eps^2}\Bigr)\,, \quad K = \cO\Bigl(\frac{1}{\alpha h} \log \frac{\chi^2(\rho_0\|\pi)}{\eps^2}\Bigr)\,, \quad K =\cO\Bigl(\frac{W_2^2(\rho_0,\pi)}{h\eps^2}\Bigr)
\end{align}
under $\LSI(1/\alpha)$, $\PI(1/\alpha)$, and log-concavity respectively. Then, we only need to consider the complexity $N$ of the implementation of the RGO in each case, where we use $\zeta \in (0,1)$ to denote the error of the implemented RGO to the true RGO.

\paragraph{Smooth case.} Here, $f$ is $\beta$-smooth. Choose $h = 1/(2\beta)$ and for $y \in \bR^d$, define 
\[
f_h (x) \deq  f(x) + \frac{1}{2h}\, \norm{x-y}^2 \,.
\]
Note that $\nabla^2 f_h = \nabla^2 f + \frac{1}{h} I_d$, so $f_h$ is $\beta$-strongly convex and $3\beta$-smooth. Thus, the RGO $\pi^{X \mid Y}$, whose density is proportional to 
\[
\exp\Bigl( -f(x) - g(x) - \frac{1}{2h}\, \norm{x-y}^2 \Bigr) = \exp\bigl(-f_h(x) - g(x) \bigr) \,,
\]
is a composite log-concave density with condition number $\kappa \le 3$. By Theorem~\ref{thm:main-result}, we can implement $\pi^{X \mid Y_k}$ in $N = \widetilde \cO(\sqrt{d} \log^4 \frac{1}{\zeta})$ function and gradient evaluations at each $k$ given access to the mode of $\pi^{X \mid Y_k}$, i.e., $x_{k, \ast} \deq \prox_{h(f+g)}(Y_k)$. Combining $\zeta = \eps / (6K)$ with $K$ defined in Eq~\eqref{eq:outer_loop_bds}, we obtain the desired complexities under the three different conditions on $\pi$.

\paragraph{Non-smooth case.}
Here, $f$ is $L$-Lipschitz.
In this case, according to Corollary~\ref{cor:RGOhat-is-close}, we can choose $h\asymp 1/\bigl(L^2 \log^3(1/\zeta)\bigr)$, so $N = \cO(\log(1/\zeta))$. Combining this with the $K$ defined in Eq~\eqref{eq:outer_loop_bds} yields the results.
\end{proof}

\section{Analysis of independent Metropolis--Hastings}

In this section, we study the \textbf{independent} Metropolis--Hastings algorithm.
For any $x \in \bR^d$, let the proposal kernel be $Q(x, \cdot) \deq  Q_x \deq \mu$.
The transition kernel of the Metropolis-adjusted chain is defined to be
\[
P(x, \dd y) \deq Q(x, \dd y)\, A(x,y) + \Bigl( 1 - \int Q(x,\dd y')\, A(x,y') \Bigr)\, \delta_x(\dd y)\,,
\]
where $\delta_x$ is the Dirac mass at $x$ and 
\[
A(x,y) = \min \Bigl(1, \frac{\pi(y)\, Q(y,x)}{\pi(x)\, Q(x,y)} \Bigr) = \min \Bigl( 1, \frac{\pi(y)\,\mu(x)}{\pi(x)\,\mu(y)} \Bigr)
\]
is the acceptance probability.
Here, $\pi$ is the stationary distribution.
We also denote $P(x, \cdot)$ by $P_x$.
\begin{remark}
    \label{rm:lazy-chains}
    We study $\frac{1}{2}$-lazy chains in this section. A Markov chain is $\ell$-lazy if at each step the chain transitions with the transition kernel $P$ with probability $1-\ell$ and stays at its current position with probability $\ell$. By studying lazy chains, we not only remove periodicity but also ensure that the Markov transition kernel is positive semidefinite at the cost of only a constant factor \citep{lovasz1993random, vempala2005geometric, mou2022efficient}.  
\end{remark}

\subsection{Preliminaries on conductance analysis}

We begin by recalling the standard toolbox for conductance analysis. Suppose we have a Markov chain on $\bR^d$ with $P$ as its transition kernel and $\pi$ be the stationary distribution. 
\begin{defin}[Cheeger isoperimetry]
We say that $\pi$ satisfies a Cheeger isoperimetric inequality with constant $\mathsf{Ch} > 0$ if
\begin{align*}
    \liminf_{\eps\to 0^+} \frac{\pi(A^\eps) - \pi(A)}{\eps} \ge \frac{1}{\mathsf{Ch}}\,\pi(A)\,(1-\pi(A))
\end{align*}
for all measurable sets $A \subseteq \bR^d$, where $A^\eps \deq \{x\in\bR^d : \dist(x, A) < \eps\}$ is the $\eps$ blow-up of $A$.
\end{defin}

If $\pi$ is $\alpha$-strongly log-concave, then $\mathsf{Ch} \lesssim 1/\sqrt{\alpha}$ \citep{milman2009role}.

\begin{defin}[$s$-conductance]
    For $s \in [0,\frac{1}{2}]$, the \textit{$s$-conductance} of $P$ is the largest $\fc_s > 0$ such that for all events $A \subseteq \bR^d$, 
    \[
    \int_A P(x, A^c)\, \pi(\dd x) \ge \fc_s\, (\pi(A) - s)\, (\pi(A^\comp) - s) \,. 
    \]
\end{defin}
Note that for $\pi(A) \le s$, the inequality holds trivially. 

\begin{lemma}[\citep{lovasz1993random}, Corollary 1.6]
    \label{lem:s-conductance}
    For any $s \in (0,\frac{1}{2}]$, let 
    \[
    \Delta_s \deq \sup \{ |\mu_0(A) - \pi(A)| : A \subseteq \bR^d,\, \pi(A) \le s \}\,.
    \]
    Then, the law of $\mu_N$ of the $N$-th iterate of a Markov chain with $s$-conductance $\fc_s$, stationary distribution $\pi$, and initialized at $\mu_0$ satisfies 
    \[
    \|\mu_N - \pi\|_{\TV} \le \Delta_s + \frac{\Delta_s}{s}\exp \Bigl( - \frac{\fc_s^2 N}{2} \Bigr)\,.
    \]
    In particular, 
    \[
    \|\mu_N - \pi\|_{\TV} \le \sqrt{s \chi^2(\mu_0 \| \pi)} + \sqrt{\frac{\chi^2(\mu_0 \| \pi)}{s}} \exp \Bigl( - \frac{\fc_s^2 N}{2} \Bigr)\,.
    \]
\end{lemma}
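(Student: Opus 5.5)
The main inequality is Corollary 1.6 of \citep{lovasz1993random}, and I would take it as given (it applies here because the chains we study are $\frac12$-lazy, cf.\ Remark~\ref{rm:lazy-chains}). The new content is the ``in particular'' statement. It follows by bounding $\Delta_s$ through the $\chi^2$-divergence of the initialization and substituting into the first inequality.

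Concretely, fix a measurable $A$ with $\pi(A) \le s$ and assume $\mu_0 \ll \pi$; otherwise $\chi^2(\mu_0\|\pi)=\infty$ and there is nothing to prove. Write $\mu_0(A) - \pi(A) = \int_A (\frac{\dd \mu_0}{\dd \pi} - 1)\,\dd \pi$. Cauchy--Schwarz gives
\[
|\mu_0(A) - \pi(A)| \le \pi(A)^{1/2}\Bigl(\int \Bigl(\frac{\dd\mu_0}{\dd\pi}-1\Bigr)^2\dd\pi\Bigr)^{1/2} = \sqrt{\pi(A)\,\chi^2(\mu_0\|\pi)} \le \sqrt{s\,\chi^2(\mu_0\|\pi)}\,.
\]
Taking the supremum over such $A$ yields $\Delta_s \le \sqrt{s\,\chi^2(\mu_0\|\pi)}$, and hence $\Delta_s/s \le \sqrt{\chi^2(\mu_0\|\pi)/s}$. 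Plugging both bounds into the first display completes the proof of the second display.

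If one wanted to reprove the first inequality rather than cite it, I would follow the Lov\'asz--Simonovits curve argument:
\begin{itemize}
\item Define $\mathcal H_N(x) \deq \sup\{\int \phi\,\dd(\mu_N - \pi) : 0 \le \phi \le 1,\ \int\phi\,\dd\pi = x\}$ for $x \in [0,1]$. This function is concave and satisfies $\mathcal H_0 \le \Delta_s$ on $[0,s] \cup [1-s,1]$.
\item Show that laziness together with the $s$-conductance bound gives the recursion $\mathcal H_N(x) \le \frac12\{\mathcal H_{N-1}(x - \fc_s \bar x) + \mathcal H_{N-1}(x + \fc_s \bar x)\}$ for $x \in [s,1-s]$, where $\bar x = \min(x-s,1-x-s)$.
\item Verify by induction that $\mathcal H_N(x) \le \Delta_s + \frac{\Delta_s}{s}\sqrt{\bar x}\,(1-\fc_s^2/8)^N$, using the elementary inequality $\frac12(\sqrt{1-\fc} + \sqrt{1+\fc}) \le 1 - \fc^2/8$.
\end{itemize}
The main obstacle in that route is the recursion step. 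It requires splitting the optimal test function $\phi$ and using laziness, which makes the one-step mass flow out of $A$ at least half the conductance flow, so as to relate $\int \phi\,\dd\mu_N$ to $\mathcal H_{N-1}$ at two shifted points. Since the paper only needs the stated form, citing \citep{lovasz1993random} and carrying out the Cauchy--Schwarz reduction above is the plan.
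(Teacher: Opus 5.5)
Your plan matches the paper, which simply cites Corollary~1.6 of \citep{lovasz1993random} for the first bound; your Cauchy--Schwarz estimate $\Delta_s \le \sqrt{s\,\chi^2(\mu_0\|\pi)}$, and hence $\Delta_s/s \le \sqrt{\chi^2(\mu_0\|\pi)/s}$, is exactly the unstated step behind the ``in particular'' clause, and it is correct. One caveat on your optional sketch: with shift $\fc_s\bar x$ and contraction factor $1-\fc_s^2/8$ you would only get $\exp(-\fc_s^2N/8)$, whereas the stated exponent comes from the Lov\'asz--Simonovits recursion, in which the conductance is that of the lazy kernel itself, so the shifted points are $x \pm 2\fc_s\bar x$ and one uses $\frac12(\sqrt{1-2\fc_s}+\sqrt{1+2\fc_s}) \le 1-\fc_s^2/2$.
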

\begin{remark}
    \label{rm:subroutine-complexity}
    Lemma \ref{lem:s-conductance} implies that, by choosing $s = \frac{\eps^2}{4\chi^2(\mu_0\|\pi)}$, we have $\|\mu_N - \pi\|_\TV \le \eps$ after 
    \[
    N = \cO\Bigl( \frac{1}{\fc_s^2} \log \frac{\chi^2(\mu_0\|\pi)}{\eps^2}  \Bigr) \quad \text{iterations.}
    \]
\end{remark}

To lower bound the $s$-conductance, we use the following geometric overlap lemma.
\begin{lemma}[Overlap lemma for $s$-conductance, \citep{chewi2026log}, Ch.~7]
    \label{lem:overlap-s-conductance}
    Assume:
    \begin{enumerate}
        \item $\pi$ satisfies Cheeger isoperimetric inequality with constant $\mathsf{Ch} > 0$.
        \item There exists $r \in [0, \mathsf{Ch}]$ and $E \subseteq \bR^d$ with probability $\pi(E) \geq 1 - \frac{rs}{16 \mathsf{Ch}}$ such that 
        \[
        \forall x, y \in E\,, \; \|x-y\| \leq r \Longrightarrow \|P(x, \cdot) - P(y, \cdot)\|_{\TV} \le \frac{1}{2}\,.
        \]
    \end{enumerate}
    Then $\fc_s \gtrsim r/\mathsf{Ch}$.
\end{lemma}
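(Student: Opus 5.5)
The plan is the standard "close points have overlapping transition kernels, so a low-flow cut forces a thin separating region, which Cheeger forbids" argument, adapted to the $s$-conductance setting with an exceptional set $E^\comp$. I assume $P$ is reversible with respect to $\pi$, as for the Metropolis--Hastings chains to which the lemma is applied. Fix a measurable $A$ with $\pi(A) > s$ and $\pi(A^\comp) > s$; otherwise the inequality is trivial. By reversibility,
\[
\int_A P(x,A^\comp)\,\pi(\dd x) = \int_{A^\comp} P(x,A)\,\pi(\dd x),
\]
so the flow equals the average of these two quantities. Define the "bad" sets
\[
A_1 \deq \{x\in A : P(x,A^\comp) < \tfrac18\}\,, \qquad A_2 \deq \{x \in A^\comp : P(x,A) < \tfrac18\}\,, \qquad A_3 \deq \bR^d\setminus(A_1\cup A_2)\,.
\]
Then the flow is at least $\frac1{16}\,\pi(A_3)$. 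It therefore suffices to show $\pi(A_3) \gtrsim (r/\mathsf{Ch})\,(\pi(A)-s)(\pi(A^\comp)-s)$.

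First I dispose of the easy case. If $\pi(A_1) \le \pi(A)/2$, then $\pi(A_3) \ge \pi(A\cap A_3) \ge \pi(A)/2$. Since $r \le \mathsf{Ch}$, this already dominates $(r/\mathsf{Ch})\,(\pi(A)-s)(\pi(A^\comp)-s)$ up to constants. The case $\pi(A_2) \le \pi(A^\comp)/2$ is symmetric. So assume $\pi(A_1) \ge \pi(A)/2$ and $\pi(A_2)\ge \pi(A^\comp)/2$.

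Next comes the geometric step. For $x\in A_1\cap E$ and $y\in A_2\cap E$,
\[
\|P_x-P_y\|_\TV \ge P_x(A) - P_y(A) > \tfrac78 - \tfrac18 > \tfrac12\,.
\]
By the overlap hypothesis this forces $\|x-y\|>r$. Hence the sets $S_1 \deq A_1\cap E$ and $S_2 \deq A_2\cap E$ are at distance greater than $r$. Their complement $B \deq \bR^d\setminus (S_1\cup S_2) \subseteq A_3\cup E^\comp$ contains the blow-up $S_1^{r}\setminus S_1$.

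I will then invoke an integrated form of Cheeger's inequality. Integrating the infinitesimal definition along $t\mapsto \pi(S_1^t)$ for $t\in[0,r]$ gives, for $r\le\mathsf{Ch}$,
\[
\pi(B) \gtrsim \frac{r}{\mathsf{Ch}}\,\pi(S_1)\,\pi(S_2)\,.
\]
Here I use that the factor $\pi(S_1^t)(1-\pi(S_1^t))$ stays above a constant multiple of $\pi(S_1)\pi(S_2)$, since $S_1^t$ never meets $S_2$ for $t\le r$.

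Finally, I account for $E^\comp$. We have $\pi(S_1)\ge \pi(A)/2 - \pi(E^\comp)$ and $\pi(S_2) \ge \pi(A^\comp)/2-\pi(E^\comp)$, and $\pi(A_3)\ge \pi(B) - \pi(E^\comp)$. Since $\pi(E^\comp)\le rs/(16\mathsf{Ch}) \le s/16$, we get $\pi(S_1)\gtrsim \pi(A)-s$ and likewise for $S_2$. The subtracted term $rs/(16\mathsf{Ch})$ is absorbed into $\frac{r}{\mathsf{Ch}}\,\pi(S_1)\pi(S_2)$ because each factor exceeds a constant times $s$; this is exactly why the conclusion is stated with $(\pi(A)-s)(\pi(A^\comp)-s)$.

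I expect the main obstacle to be this bookkeeping. The integrated Cheeger bound has to come out with constants large enough that the loss $\pi(E^\comp)$, which is only $O(rs/\mathsf{Ch})$, is dominated; otherwise the constant $1/16$ in the hypothesis must be adjusted. The threshold $\frac18$ (rather than $\frac14$) is chosen so that the total variation strictly exceeds $\frac12$, which is what lets the overlap hypothesis be applied.
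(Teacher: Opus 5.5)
Your proof is the standard partition--overlap--isoperimetry argument; the paper gives no proof of its own and simply cites the book, which uses this argument. It goes through, but the step you flagged as delicate is justified incorrectly. Knowing only that each of $\pi(S_1)$ and $\pi(S_2)$ exceeds a constant times $s$ gives $\pi(S_1)\,\pi(S_2)\gtrsim s^2$, which cannot absorb a loss of order $s$. Likewise, lower bounds of the form $\pi(S_1)\gtrsim\pi(A)-s$ are useless when $\pi(A)$ is barely above $s$. What you need is a bound proportional to $\pi(A)$ itself, together with the fact that one of $A$, $A^\comp$ has mass at least $\frac12$.

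\textbf{The fix.} Since $s<\pi(A)$ and $\pi(E^\comp)\le rs/(16\mathsf{Ch})\le s/16$, you get $\pi(S_1)\ge\pi(A)/2-s/16\ge\frac{7}{16}\,\pi(A)$, and likewise $\pi(S_2)\ge\frac{7}{16}\,\pi(A^\comp)$. If, say, $\pi(A)\le\frac12$, then $\pi(S_1)\,\pi(S_2)\ge\frac{49}{512}\,\pi(A)$, while $\frac{s}{16}<\frac{32}{512}\,\pi(A)$.

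Your integrated Cheeger bound in fact holds with constant one: $\pi(S_1^r\setminus S_1)\ge\frac{r}{\mathsf{Ch}}\,\pi(S_1)\,\pi(S_2)$. To see this, apply Lebesgue's theorem for monotone functions to $t\mapsto\pi(S_1^t)$, using $(S_1^t)^\eps\subseteq S_1^{t+\eps}$ and $S_1^t\cap S_2=\emptyset$ for $t\le r$. Combining this with $\pi(A_3)\ge\pi(B)-\pi(E^\comp)$ gives
\[
\pi(A_3)\ge\frac{r}{\mathsf{Ch}}\Bigl(\pi(S_1)\,\pi(S_2)-\frac{s}{16}\Bigr)\ge\frac{17}{512}\,\frac{r}{\mathsf{Ch}}\,\pi(A)\ge\frac{17}{512}\,\frac{r}{\mathsf{Ch}}\,(\pi(A)-s)\,(\pi(A^\comp)-s)\,.
\]
So the constant $\frac{1}{16}$ in the hypothesis suffices with no adjustment. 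It does so only because $\frac{49}{512}>\frac{1}{16}$, which is why this computation has to be written out rather than asserted.

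Two minor points:
\begin{itemize}
\item The flow identity needs only stationarity of $\pi$, not reversibility.
\item The threshold $\frac14$ already gives $\|P_x-P_y\|_\TV\ge P_x(A)-P_y(A)>\frac34-\frac14=\frac12$. So $\frac18$ is harmless, but it is not needed for strictness.
\end{itemize}
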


In order to apply Lemma~\ref{lem:overlap-s-conductance}, we need to control $\|P_x - P_y\|_\TV \le \frac{1}{2}$ for $x, y \in \bR^d$ on a set $E$ with high probability under $\pi$. By the triangle inequality, 
\begin{align}
    \| P_x - P_y \|_\TV
    &\le \|P_x - Q_x\|_\TV + \|Q_x - Q_y\|_\TV + \|Q_y - P_y\|_\TV \nonumber \\
    &= \|P_x - Q_x \|_\TV + \|P_y - Q_y\|_\TV\,,\label{eq:mh_triangle}
\end{align}
where the middle term is $0$ for the independent MH algorithm.
To control the other terms, we invoke the following lemma.

\begin{lemma}[Pointwise projection property, \citep{chewi2021optimal}, Theorem 6]
    \label{lem:pointwise-projection-property}
    Let $Q$ be an atomless proposal kernel and $P$ be the corresponding Metropolis--Hastings kernel with target distribution $\pi$. Then, for any atomless kernel $\Bar{Q}$ which is reversible with respect to $\pi$, and for every $x \in \bR^d$, 
    \[
    \|P_x - Q_x\|_{\TV} \leq 2\, \|Q_x - \Bar{Q}_x\|_{\TV} + \int \frac{\pi(y)\, \Bar{Q}(y,x)}{\pi(x)} \,\Bigl| \frac{Q(y,x)}{\Bar{Q}(y,x)} -1 \Bigr|\, \dd y\,.
    \]
\end{lemma}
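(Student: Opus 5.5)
The plan is to compute $\|P_x - Q_x\|_\TV$ exactly as the rejection probability at $x$, and then to bound that probability using the reversibility of $\bar Q$. Fix $x \in \bR^d$ and write
\[
r(x) \deq 1 - \int Q(x,\dd y)\, A(x,y) = \int Q(x,y)\,\bigl(1 - A(x,y)\bigr)\,\dd y\,.
\]
We have $P_x = A(x,\cdot)\,Q_x + r(x)\,\delta_x$. Since $Q_x$ is atomless, the measure $A(x,\cdot)\,Q_x$ gives no mass to $\{x\}$, and so it is mutually singular with $\delta_x$. Consequently the signed measure $P_x - Q_x$ has two mutually singular parts:
\begin{itemize}
\item an absolutely continuous part $-(1-A(x,\cdot))\,Q_x$, with total mass $-r(x)$;
\item an atom at $x$, of mass $+r(x)$.
\end{itemize}
Its total variation norm is therefore $\tfrac12\,(r(x)+r(x)) = r(x)$, so it suffices to bound $r(x)$.

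Next I rewrite the integrand of $r(x)$. For each $y$,
\[
Q(x,y)\,(1-A(x,y)) = \Bigl(Q(x,y) - \frac{\pi(y)\,Q(y,x)}{\pi(x)}\Bigr)_+\,,
\]
because $A(x,y) = \min\bigl(1, \frac{\pi(y)Q(y,x)}{\pi(x)Q(x,y)}\bigr)$. Now insert $\bar Q$ and use its reversibility with respect to $\pi$, which gives $\bar Q(x,y) = \pi(y)\,\bar Q(y,x)/\pi(x)$. This lets me write
\[
Q(x,y) - \frac{\pi(y)Q(y,x)}{\pi(x)} = \bigl(Q(x,y) - \bar Q(x,y)\bigr) + \frac{\pi(y)}{\pi(x)}\,\bigl(\bar Q(y,x) - Q(y,x)\bigr)\,.
\]
Apply $(a+b)_+ \le |a| + |b|$ to this decomposition and integrate in $y$.
\begin{itemize}
\item The first term gives $\int |Q(x,y) - \bar Q(x,y)|\,\dd y = 2\,\|Q_x - \bar Q_x\|_\TV$.
\item The second term gives $\int \frac{\pi(y)}{\pi(x)}\,|\bar Q(y,x) - Q(y,x)|\,\dd y$. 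Factoring out $\bar Q(y,x)$ turns this into exactly $\int \frac{\pi(y)\,\bar Q(y,x)}{\pi(x)}\,\bigl|\frac{Q(y,x)}{\bar Q(y,x)} - 1\bigr|\,\dd y$.
\end{itemize}
Adding the two contributions yields the claimed inequality.

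The only delicate step is the identification $\|P_x - Q_x\|_\TV = r(x)$. This is exactly where atomlessness of $Q$ is used: it guarantees that the accepted part and the point mass at $x$ do not overlap, so their masses add rather than cancel. Elsewhere one must also handle points where $\bar Q(y,x) = 0$ or $\pi(x) = 0$. I would do this by interpreting the integrand via the un-factored form $\pi(y)\,|\bar Q(y,x) - Q(y,x)|/\pi(x)$, with the convention that the bound is trivial when it is infinite.
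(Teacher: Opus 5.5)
Your proof is correct. The paper does not prove this lemma; it quotes it from \citep{chewi2021optimal}, so your proposal supplies a self-contained derivation. It identifies $\|P_x-Q_x\|_{\TV}$ with the rejection probability $r(x)=\int \bigl(Q(x,y)-\pi(y)\,Q(y,x)/\pi(x)\bigr)_+\,\dd y$. It then splits off $\bar Q$ using detailed balance $\pi(x)\,\bar Q(x,y)=\pi(y)\,\bar Q(y,x)$. All steps check out, including the convention $\int |Q(x,y)-\bar Q(x,y)|\,\dd y = 2\,\|Q_x-\bar Q_x\|_{\TV}$, which matches the paper's TV normalization.

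Writing the argument out also shows what the hypotheses actually do. Two small remarks follow.

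First, the upper bound $\|P_x-Q_x\|_{\TV}\le r(x)$ does not need atomlessness. The measure $P_x-Q_x=r(x)\,\delta_x-(1-A(x,\cdot))\,Q_x$ is a difference of two nonnegative measures of the same mass $r(x)$, so every set receives signed mass in $[-r(x),r(x)]$. Atomlessness, or more precisely the existence of densities for $Q_x$ and $\bar Q_x$, is only needed for the equality and for manipulating the integrands pointwise. Your sentence claiming it is ``exactly where atomlessness is used'' is therefore stronger than necessary, though harmless.

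Second, you can use $(a+b)_+\le a_+ + b_+$ in place of $(a+b)_+\le |a|+|b|$. Then the first term becomes $\int (Q(x,y)-\bar Q(x,y))_+\,\dd y=\|Q_x-\bar Q_x\|_{\TV}$, so the constant $2$ in the lemma improves to $1$. In the proof of Theorem~\ref{thm:indep_mh} this would replace $4\,\|\mu-\pi\|_{\TV}$ by $2\,\|\mu-\pi\|_{\TV}$, a minor but free improvement.
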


We next show how to control these terms under a R\'enyi divergence condition for the proposal.

\subsection{Proof of Theorem~\ref{thm:indep_mh}}
\label{app:proof-of-indep-mh}
\begin{proof}[Proof of Theorem~\ref{thm:indep_mh}]
    Since both $\mu$ and $\pi$ are atomless, we can apply Lemma~\ref{lem:pointwise-projection-property} and obtain that: $\forall x \in \bR^d$,
    \begin{equation}
        \label{eq:pointwise-projection-IMHA}
        \| P_x - Q_x\|_\TV \le 2\, \| \mu - \pi\|_\TV + \int \frac{\pi(y)\, \pi(x)}{\pi(x)} \,\Bigl| \frac{\mu(x)}{\pi(x)}-1 \Bigr|\, \dd y = 2\, \|\mu - \pi\|_\TV + \Bigl| \frac{\mu(x)}{\pi(x)}-1 \Bigr| \,.
    \end{equation}
    For the first term, by Pinsker's inequality,
    \begin{equation}
        \label{eq:TV-bound-IMHA}
        \|\mu - \pi \|_\TV \le \sqrt{\frac{1}{2}\, \KL(\mu \| \pi)} = \sqrt{\frac{1}{2}\, \sfR_1(\mu \| \pi)} \le \sqrt{\frac{1}{2}\, C_\sfR} \, .
    \end{equation}
    For the second term, by Markov's inequality: $\forall t \in (0,1)$, 
    \begin{align}
        \label{eq:ratio-bound-IMHA}
        \pi \Bigl( \left| \frac{\mu}{\pi}-1 \right| \ge t\Bigr) &= \pi \Bigl( \frac{\mu}{\pi} \ge 1+t \Bigr) + \pi \Bigl( \frac{\pi}{\mu} \ge \frac{1}{1-t} \Bigr) \notag \\
        &\le (1+t)^{-q}\, \bE_\pi\left[(\mu/\pi)^q \right] + (1-t)^{q-1}\, \bE_\pi[(\pi/\mu)^{q-1}] \notag \\
        &=(1+t)^{-q} \exp((q-1)\, \sfR_q(\mu \| \pi)) + (1-t)^{q-1} \exp( (q-1)\, \sfR_q(\pi \| \mu)) \notag \\
        &\le ((1+t)^{-q}+(1-t)^{q-1}) \exp(C_\sfR q^{\gamma +1})\,.
    \end{align}
    With $t = \frac{1}{8}$, Eq.~\eqref{eq:ratio-bound-IMHA} becomes: 
    \[
    \pi\Bigl( \left| \frac{\mu}{\pi}-1\right| \ge \frac{1}{8} \Bigr) \le \Bigl[ \bigl( \frac{8}{9} \bigr)^q + \bigl( \frac{7}{8}\bigr)^{q-1}\Bigr] \exp(C_\sfR q^{\gamma +1}) \le 3 \,\bigl(\frac{8}{9}\bigr)^q \exp(C_\sfR q^{\gamma+1}) \, .
    \]
    Define the set $E  \deq  \{ |\mu/\pi -1| \leq \frac{1}{8}\}$. 
    By the triangle inequality in Eq.~\eqref{eq:mh_triangle}, Eq.~\eqref{eq:pointwise-projection-IMHA}, Eq.~\eqref{eq:TV-bound-IMHA}, and Eq.~\eqref{eq:ratio-bound-IMHA}, we have
    \begin{align*}
       \|P_x - P_y\|_\TV
       &\le 4\, \| \mu - \pi \|_\TV + \Bigl|\frac{\mu(x)}{\pi(x)}-1\Bigr| + \Bigl|\frac{\mu(y)}{\pi(y)}-1\Bigr| \le \frac{1}{2} 
    \end{align*}
    for all $x,y\in E$, provided that $C_\sfR \le \frac{1}{128}$.
    
    As in Remark~\ref{rm:subroutine-complexity}, let $s = \frac{\eps^2}{4 \chi^2(\mu \| \pi)}$, where we initialize the Markov chain with $\mu_0 = \mu$. Then,
    \[
    \chi^2 (\mu \| \pi) = \exp(\sfR_2(\mu\|\pi)) -1 \le \exp (C_\sfR\,2^{\gamma}) = \cO(1)\, ,
    \]
    so $s \asymp \eps^2$. In Lemma~\ref{lem:overlap-s-conductance}, we can choose $r = \mathsf{Ch}$. To bound $\pi(E^\comp)$ by $\frac{rs}{16 \mathsf{Ch}} = \frac{s}{16}$, it suffices to take $q \asymp \log(1/s) \asymp \log(1/\eps)$ and for $C_\sfR \ll q^{-(\gamma+1)}$.
    Then, by Lemma~\ref{lem:overlap-s-conductance}, the $s$-conductance satisfies $\fc_s \gtrsim r/\mathsf{Ch} =1$. Finally, according to Remark~\ref{rm:subroutine-complexity}, the number of iterations we need to obtain $\eps$-TV distance to $\pi$ is 
    \begin{align*}
    N &= \cO\Bigl( \frac{1}{\fc_s^2} \log \frac{\chi^2(\mu \| \pi)}{\eps^2} \Bigr) = \cO(\log(1/\eps)) \,. \qedhere
    \end{align*}
\end{proof}

\section{RGO implementations}\label{app:rgo_implementation}

Here, we provide more details on concrete RGO implementations.
Given a set $\eu C$, we let $\iota_{\eu C}$ denote the convex indicator of $\eu C$.
\begin{itemize}
    \item \textbf{Indicator of a box.}
    For the indicator of a box, $g = \iota_{\bigtimes_{i=1}^d [a_i ,b_i]}$, sampling from the RGO reduces to sampling from one-dimensional truncated Gaussians $\cN(y, h)|_{[a,b]}$.
    \item \textbf{$\ell_1$ penalty.}
    For $g(x) \deq \lambda\,\norm x_1$, by separability, it suffices to consider the one-dimensional case.
    One can see that
    \begin{align*}
        \RGO_{g,h,y}(x) \propto \begin{cases}
            \exp(-\frac{1}{2h}\,(x-(y+\lambda h))^2)\,,& x \le 0\,, \\[0.25em]
            \exp(-\frac{1}{2h}\,(x-(y-\lambda h))^2)\,, & x \ge 0\,.
        \end{cases}
    \end{align*}
    Define the weights
    \begin{align*}
        w_- \deq \exp\bigl(\lambda y + \frac{1}{2}\,\lambda^2 h\bigr)\,\Phi\bigl(-\frac{y+\lambda h}{\sqrt h}\bigr)\,, \qquad w_+ \deq \exp\bigl(-\lambda y + \frac{1}{2}\,\lambda^2 h\bigr)\,\Phi\bigl(\frac{y-\lambda h}{\sqrt h}\bigr)\,,
    \end{align*}
    where $\Phi(\cdot)$ is the standard Gaussian CDF\@.
    Then, an exact sample from $\RGO(g,h,y)$ is produced as follows: with probability $w_-/(w_- + w_+)$, draw a sample from $\cN(y+\lambda h, h)|_{\bR_-}$; otherwise, draw a sample from $\cN(y-\lambda h, h)|_{\bR_+}$.
    \item \textbf{Other separable functions.}
    Further examples of separable functions include $\ell_p$ norms $g(x) \deq \lambda\,\norm x_p^p$ for $p\ge 1$; the elastic net $g(x) \deq \lambda_1\,\norm x_1 + \frac{\lambda_2}{2}\,\norm x_2^2$; hinge penalties $g(x) \deq \sum_{i=1}^d \lambda_i\,(x_i - b_i)_+$; and the entropy map $g(x) \deq \sum_{i=1}^d x_i \log x_i + \iota_{\bR_+^d}(x)$.
    In each of these cases, the RGO reduces to sampling from one-dimensional densities, for which closed-form samplers or tailored rejection sampling schemes can be developed.
    \item \textbf{$\ell_\infty$ penalty.}
    For $g(x) \deq \lambda\,\norm x_\infty$, the density of the RGO is
    \begin{align*}
        \RGO_{g,h,y}(x)
        &\propto \exp\Bigl(-\lambda\,\norm x_\infty - \frac{1}{2h} \sum_{i=1}^d (x_i - y_i)^2\Bigr) \\
        &\propto \int_0^\infty \exp\Bigl(-\lambda t - \frac{1}{2h} \sum_{i=1}^d (x_i - y_i)^2\Bigr)\one_{t\ge \norm x_\infty}\,\dd t \\
        &\propto \int_0^\infty \exp(-\lambda t)\,\prod_{i=1}^d \Bigl[Z_i(t)\,\frac{\exp(-\frac{1}{2h}\,(x_i-y_i)^2) \one_{t\ge \abs{x_i}}}{Z_i(t)}\Bigr]\,\dd t\,,
    \end{align*}
    where $Z_i(t) = \sqrt{2\pi h}\,[\Phi(\frac{t-y_i}{\sqrt h}) - \Phi(\frac{-t-y_i}{\sqrt h})]$.
    This admits the following interpretation: first, sample $t$ from the following density over $\bR_+$.
    \begin{align*}
        p(t) \propto \exp(-\lambda t) \prod_{i=1}^d Z_i(t)\,.
    \end{align*}
    Then, conditionally on $t$, draw $X_1,\dotsc,X_d$ independently with $X_i \sim \cN(y_i, h)|_{[-t, t]}$.

    The max penalty $g(x) = \lambda\max_{i\in [d]} x_i$ can be handled similarly.
\end{itemize}

\section{Further experimental details}\label{app:experiments}

\paragraph{Measuring gradient evaluations.}
For each method, we roughly counted the number of gradient evaluations $\nabla f$ per iteration as follows. (The number of function evaluations of $f$ is roughly the same.)
\begin{itemize}
    \item PGLA always uses one gradient evaluation per iteration.
    \item Prox-MALA uses two gradient evaluations per iteration: one is used to generate the proposal, and another is needed to compute the acceptance ratio.
    \item In each outer iteration of the Composite Sampler, we use one initial computation of $\nabla f$ for the first proposal.
    Each inner iteration for the RGO implementation further requires one new gradient evaluation in order to evaluate the acceptance probability.
    In total, each outer iteration requires one evaluation of $\prox_{hf}$ and $N+1$ evaluations of $\nabla f$, where $N$ is the number of inner iterations.
    In practice, we capped the number of inner iterations to be $N_{\max}$ and we count the cost of an outer iteration to be $N_{\max}+1$, although this somewhat overcounts since the number of actual gradient evaluations can be smaller if the proposals are accepted in fewer than $N_{\max}$ steps.
\end{itemize}

    \printbibliography

@InProceedings{LeeSheTia21RGO,
  title = 	 {Structured logconcave sampling with a restricted {G}aussian oracle},
  author =       {Lee, Yin Tat and Shen, Ruoqi and Tian, Kevin},
  booktitle = 	 {Proceedings of Thirty Fourth Conference on Learning Theory},
  pages = 	 {2993--3050},
  year = 	 {2021},
  editor = 	 {Belkin, Mikhail and Kpotufe, Samory},
  volume = 	 {134},
  series = 	 {Proceedings of Machine Learning Research},
  month = 	 {8},
  publisher =    {PMLR},
}

@inproceedings{chewi2021optimal,
  title={Optimal dimension dependence of the {M}etropolis-adjusted {L}angevin algorithm},
  author={Chewi, Sinho and Lu, Chen and Ahn, Kwangjun and Cheng, Xiang and Le Gouic, Thibaut and Rigollet, Philippe},
  booktitle={Conference on Learning Theory},
  pages={1260--1300},
  year={2021},
  organization={PMLR}
}

@incollection{ledoux2006concentration,
  title={Concentration of measure and logarithmic {S}obolev inequalities},
  author={Ledoux, Michel},
  booktitle={Seminaire de probabilites XXXIII},
  pages={120--216},
  year={2006},
  publisher={Springer}
}

@book{chewi2026log,
  author={Sinho Chewi},
  title={Log-concave sampling},
  year={2026+},
  note={Book draft. Available at \url{https://chewisinho.github.io/main.pdf}}
}

@article{wilmer2009markov,
  title={Markov chains and mixing times},
  author={Wilmer, Elizabeth L. and Levin, David A. and Peres, Yuval},
  journal={American Mathematical Soc., Providence},
  volume={107},
  year={2009}
}

@article{chen2026high,
  title={High-accuracy sampling for diffusion models and log-concave distributions},
  author={Chen, Fan and Chewi, Sinho and Daskalakis, Constantinos and Rakhlin, Alexander},
  journal={arXiv preprint arXiv:2602.01338},
  year={2026}
}

@article{vempala2019rapid,
  title={Rapid convergence of the unadjusted {L}angevin algorithm: isoperimetry suffices},
  author={Vempala, Santosh and Wibisono, Andre},
  journal={Advances in Neural Information Processing Systems},
  volume={32},
  year={2019}
}

@inproceedings{fan2023improved,
  title={Improved dimension dependence of a proximal algorithm for sampling},
  author={Fan, Jiaojiao and Yuan, Bo and Chen, Yongxin},
  booktitle={The Thirty Sixth Annual Conference on Learning Theory},
  pages={1473--1521},
  year={2023},
  organization={PMLR}
}

@article{dalalyan2017theoretical,
  title={Theoretical guarantees for approximate sampling from smooth and log-concave densities},
  author={Dalalyan, Arnak S.},
  journal={Journal of the Royal Statistical Society Series B: Statistical Methodology},
  volume={79},
  number={3},
  pages={651--676},
  year={2017},
  publisher={Oxford University Press}
}

@article{durmus2017nonasymptotic,
    author = {Alain Durmus and {\'E}ric Moulines},
    title = {Nonasymptotic convergence analysis for the unadjusted {L}angevin algorithm},
    volume = {27},
    journal = {The Annals of Applied Probability},
    number = {3},
    publisher = {Institute of Mathematical Statistics},
    pages = {1551 -- 1587},
    year = {2017},
}

@inproceedings{chen2022improved,
  title={Improved analysis for a proximal algorithm for sampling},
  author={Chen, Yongxin and Chewi, Sinho and Salim, Adil and Wibisono, Andre},
  booktitle={Conference on Learning Theory},
  pages={2984--3014},
  year={2022},
  organization={PMLR}
}

@article{kook2024and,
  title={In-and-out: algorithmic diffusion for sampling convex bodies},
  author={Kook, Yunbum and Vempala, Santosh S and Zhang, Matthew S},
  journal={Advances in Neural Information Processing Systems},
  volume={37},
  pages={108354--108388},
  year={2024}
}

@article{kook2025faster,
  title={Faster logconcave sampling from a cold start in high dimension},
  author={Kook, Yunbum and Vempala, Santosh S.},
  journal={arXiv preprint arXiv:2505.01937},
  year={2025}
}

@inproceedings{kook2025renyi,
  title={R{\'e}nyi-infinity constrained sampling with {$d^3$} membership queries},
  author={Kook, Yunbum and Zhang, Matthew S.},
  booktitle={Proceedings of the 2025 Annual ACM-SIAM Symposium on Discrete Algorithms (SODA)},
  pages={5278--5306},
  year={2025},
  organization={SIAM}
}

@article{LiaChe23Prox,
title={A proximal algorithm for sampling},
author={Jiaming Liang and Yongxin Chen},
journal={Transactions on Machine Learning Research},
year={2023},
note={}
}

@article{wu2022minimax,
  title={Minimax mixing time of the {M}etropolis-adjusted {L}angevin algorithm for log-concave sampling},
  author={Wu, Keru and Schmidler, Scott and Chen, Yuansi},
  journal={Journal of Machine Learning Research},
  volume={23},
  number={270},
  pages={1--63},
  year={2022}
}

@article{altschuler2024faster,
  title={Faster high-accuracy log-concave sampling via algorithmic warm starts},
  author={Altschuler, Jason M and Chewi, Sinho},
  journal={Journal of the ACM},
  volume={71},
  number={3},
  pages={1--55},
  year={2024},
  publisher={ACM New York, NY}
}

@article{chewi2025analysis,
  title={Analysis of {L}angevin {M}onte {C}arlo from {P}oincar\'{e} to log-{S}obolev},
  author={Chewi, Sinho and Erdogdu, Murat A and Li, Mufan and Shen, Ruoqi and Zhang, Matthew S.},
  journal={Foundations of Computational Mathematics},
  volume={25},
  number={4},
  pages={1345--1395},
  year={2025},
  publisher={Springer}
}

@article{dwivedi2019log,
  title={Log-concave sampling: {M}etropolis--{H}astings algorithms are fast},
  author={Dwivedi, Raaz and Chen, Yuansi and Wainwright, Martin J. and Yu, Bin},
  journal={Journal of Machine Learning Research},
  volume={20},
  number={183},
  pages={1--42},
  year={2019}
}

@article{mou2022efficient,
  title={An efficient sampling algorithm for non-smooth composite potentials},
  author={Mou, Wenlong and Flammarion, Nicolas and Wainwright, Martin J. and Bartlett, Peter L.},
  journal={Journal of Machine Learning Research},
  volume={23},
  number={233},
  pages={1--50},
  year={2022}
}

@article{dang2025oracle,
  title={Oracle-based Uniform Sampling from Convex Bodies},
  author={Dang, Thanh and Liang, Jiaming},
  journal={arXiv preprint arXiv:2510.02983},
  year={2025}
}

@book{vershynin2018high,
  title={High-dimensional probability: an introduction with applications in data science},
  author={Vershynin, Roman},
  volume={47},
  year={2018},
  publisher={Cambridge University Press}
}

@article{lovasz1993random,
  title={Random walks in a convex body and an improved volume algorithm},
  author={Lov{\'a}sz, L{\'a}szl{\'o} and Simonovits, Mikl{\'o}s},
  journal={Random Structures \& Algorithms},
  volume={4},
  number={4},
  pages={359--412},
  year={1993},
  publisher={Wiley Online Library}
}

@book {BouLugMas13Con,
    AUTHOR = {Boucheron, St\'{e}phane and Lugosi, G\'{a}bor and Massart, Pascal},
     TITLE = {Concentration inequalities},
      NOTE = {A nonasymptotic theory of independence,
              With a foreword by Michel Ledoux},
 PUBLISHER = {Oxford University Press, Oxford},
      YEAR = {2013},
     PAGES = {x+481},
}

@article{parikh2014proximal,
  title={Proximal algorithms},
  author={Parikh, Neal and Boyd, Stephen},
  journal={Foundations and Trends in Optimization},
  volume={1},
  number={3},
  pages={127--239},
  year={2014},
  publisher={Emerald Publishing Limited}
}

@article{beck2009fast,
  title={A fast iterative shrinkage-thresholding algorithm for linear inverse problems},
  author={Beck, Amir and Teboulle, Marc},
  journal={SIAM Journal on Imaging Sciences},
  volume={2},
  number={1},
  pages={183--202},
  year={2009},
  publisher={SIAM}
}

@article{durmus2019high,
author = {Alain Durmus and {\'E}ric Moulines},
title = {High-dimensional {B}ayesian inference via the unadjusted {L}angevin algorithm},
volume = {25},
journal = {Bernoulli},
number = {4A},
publisher = {Bernoulli Society for Mathematical Statistics and Probability},
pages = {2854 -- 2882},
year = {2019},
}

@inproceedings{bernton2018langevin,
  title={Langevin {M}onte {C}arlo and {JKO} splitting},
  author={Bernton, Espen},
  booktitle={Conference on Learning Theory},
  pages={1777--1798},
  year={2018},
  organization={PMLR}
}

@article{pereyra2016proximal,
  title={Proximal {M}arkov chain {M}onte {C}arlo algorithms},
  author={Pereyra, Marcelo},
  journal={Statistics and Computing},
  volume={26},
  number={4},
  pages={745--760},
  year={2016},
  publisher={Springer}
}

@inproceedings{brosse2017sampling,
  title={Sampling from a log-concave distribution with compact support with proximal {L}angevin {M}onte {C}arlo},
  author={Brosse, Nicolas and Durmus, Alain and Moulines, {\'E}ric and Pereyra, Marcelo},
  booktitle={Conference on Learning Theory},
  pages={319--342},
  year={2017},
  organization={PMLR}
}

@article{durmus2018efficient,
  title={Efficient {B}ayesian computation by proximal {M}arkov chain {M}onte {C}arlo: when {L}angevin meets {M}oreau},
  author={Durmus, Alain and Moulines, Eric and Pereyra, Marcelo},
  journal={SIAM Journal on Imaging Sciences},
  volume={11},
  number={1},
  pages={473--506},
  year={2018},
  publisher={SIAM}
}

@article{bubeck2018sampling,
  title={Sampling from a log-concave distribution with projected {L}angevin {M}onte {C}arlo},
  author={Bubeck, S{\'e}bastien and Eldan, Ronen and Lehec, Joseph},
  journal={Discrete \& Computational Geometry},
  volume={59},
  number={4},
  pages={757--783},
  year={2018},
  publisher={Springer}
}

@article{vempala2005geometric,
  title={Geometric random walks: a survey},
  author={Vempala, Santosh},
  journal={Combinatorial and computational geometry},
  volume={52},
  number={573-612},
  pages={2},
  year={2005},
  publisher={Cambridge University Press Cambridge}
}

@article{milman2009role,
  title={On the role of convexity in isoperimetry, spectral gap and concentration},
  author={Milman, Emanuel},
  journal={Inventiones Mathematicae},
  volume={177},
  number={1},
  pages={1--43},
  year={2009},
  publisher={Springer}
}

@article{Chen+26HighAccStoch,
      title={High-accuracy log-concave sampling with stochastic queries}, 
      author={Fan Chen and Sinho Chewi and Constantinos Daskalakis and Alexander Rakhlin},
      year={2026},
      journal={arXiv preprint 2602.14342},
}

@article{Kook25Zeroth,
      title={Zeroth-order log-concave sampling}, 
      author={Yunbum Kook},
      year={2025},
      journal={arXiv preprint 2507.18021},
}

@inproceedings{KooVem25Integration,
author = {Kook, Yunbum and Vempala, Santosh S.},
title = {Sampling and integration of logconcave functions by algorithmic diffusion},
year = {2025},
publisher = {Association for Computing Machinery},
address = {New York, NY, USA},
booktitle = {Proceedings of the 57th Annual ACM Symposium on Theory of Computing},
pages = {924--932},
numpages = {9},
location = {Prague, Czechia},
series = {STOC '25}
}

@InProceedings{Yuan+23Networks,
  title = 	 {On a class of {G}ibbs sampling over networks},
  author =       {Yuan, Bo and Fan, Jiaojiao and Liang, Jiaming and Wibisono, Andre and Chen, Yongxin},
  booktitle = 	 {Proceedings of Thirty Sixth Conference on Learning Theory},
  pages = 	 {5754--5780},
  year = 	 {2023},
  editor = 	 {Neu, Gergely and Rosasco, Lorenzo},
  volume = 	 {195},
  series = 	 {Proceedings of Machine Learning Research},
  month = 	 {7},
  publisher =    {PMLR},
}

@article {BriPed25HeatFlow,
    AUTHOR = {Brigati, Giovanni and Pedrotti, Francesco},
     TITLE = {Heat flow, log-concavity, and {L}ipschitz transport maps},
   JOURNAL = {Electron. Commun. Probab.},
  FJOURNAL = {Electronic Communications in Probability},
    VOLUME = {30},
      YEAR = {2025},
     PAGES = {Paper No. 71, 12},
}

@inproceedings{SalKovRic19ProxLangevin,
 author = {Salim, Adil and Kovalev, Dmitry and Richtarik, Peter},
 booktitle = {Advances in Neural Information Processing Systems},
 editor = {H. Wallach and H. Larochelle and A. Beygelzimer and F. d\textquotesingle Alch\'{e}-Buc and E. Fox and R. Garnett},
 pages = {},
 publisher = {Curran Associates, Inc.},
 title = {Stochastic proximal {L}angevin algorithm: potential splitting and nonasymptotic rates},
 volume = {32},
 year = {2019}
}

@inproceedings{SalRic20ProxLangevin,
 author = {Salim, Adil and Richtarik, Peter},
 booktitle = {Advances in Neural Information Processing Systems},
 editor = {H. Larochelle and M. Ranzato and R. Hadsell and M.F. Balcan and H. Lin},
 pages = {3786--3796},
 publisher = {Curran Associates, Inc.},
 title = {Primal dual interpretation of the proximal stochastic gradient {L}angevin algorithm},
 volume = {33},
 year = {2020}
}

@article {HabHolPoc24Subgrad,
    AUTHOR = {Habring, Andreas and Holler, Martin and Pock, Thomas},
     TITLE = {Subgradient {L}angevin methods for sampling from nonsmooth potentials},
   JOURNAL = {SIAM J. Math. Data Sci.},
  FJOURNAL = {SIAM Journal on Mathematics of Data Science},
    VOLUME = {6},
      YEAR = {2024},
    NUMBER = {4},
     PAGES = {897--925},
}

@article {DurMajMia19LMCCvxOpt,
    AUTHOR = {Durmus, Alain and Majewski, Szymon and Miasojedow, B\l{}a\.{z}ej},
     TITLE = {Analysis of {L}angevin {M}onte {C}arlo via convex optimization},
   JOURNAL = {J. Mach. Learn. Res.},
  FJOURNAL = {Journal of Machine Learning Research (JMLR)},
    VOLUME = {20},
      YEAR = {2019},
     PAGES = {Paper No. 73, 46},
}

@article{Bakry1985,
     author = {Bakry, Dominique and \'Emery, Michel},
     title = {Diffusions hypercontractives},
     journal = {S\'eminaire de probabilit\'es},
     pages = {177--206},
     year = {1985},
     publisher = {Springer - Lecture Notes in Mathematics},
     volume = {19},
     mrnumber = {889476},
     zbl = {0561.60080},
}
    
\end{document}